\newcommand{\Hc}{{\mathcal H}}
\newcommand{\comp}{\mbox{comp}}
\newcommand{\coev}{\mbox{coev}}
\newcommand{\ev}{\mbox{ev}}
\newcommand{\otb}{{\overline{\otimes}}}
\newcommand{\Mo}{{\mathcal M}}
\newcommand{\No}{{\mathcal N}}
\newcommand{\Mod}{{\mathcal Mod}}
\newcommand{\camod}{{}_{\ca}\rm{Mod}}
\newcommand{\adj}{{\mathcal Ad}}
\newcommand{\ca}{{\mathcal C}}
\newcommand{\ot}{{\otimes}}
\newcommand{\op}{\rm{op}}
\newcommand{\Ec}{{\mathcal E}}
\newcommand{\wbc}{{\widetilde{\Bc}}}
\newcommand{\cha}{{\mathcal A}}
\newcommand{\ele}{{\mathcal L}}
\newcommand{\D}{{\mathcal D}}
\newcommand{\Do}{{\mathcal D}}
\newcommand{\Bc}{{\mathcal B}}
\newcommand{\Fc}{{\mathcal F}}
\newcommand{\Gc}{{\mathcal G}}
\newcommand{\rev}{\rm{rev}}
\newcommand{\ra}{\rm{ra}}
\newcommand{\la}{\rm{la}}
\newcommand{\cf}{\rm{CF}}
\newcommand{\ku}{{\Bbbk}}
\newcommand{\uno}{ \mathbf{1}}
\newcommand{\C}{{\mathcal C}}
\newcommand{\id}{\mbox{\rm id\,}}
\newcommand{\Id}{\mbox{\rm Id\,}}
\newcommand{\vect}{\mbox{\rm vect\,}}
\newcommand{\Rex}{\mbox{\rm Rex\,}}
\newcommand{\Fun}{\operatorname{Fun}}
\newcommand\Hom{\operatorname{Hom}}
\newcommand\uhom{\underline{\Hom}}
\newcommand{\End}{\operatorname{End}}
\theoremstyle{plain}
\numberwithin{equation}{section}
\newtheorem{teo}{Theorem}[section]
\newtheorem{lema}[teo]{Lemma}
\newtheorem{cor}[teo]{Corollary}
\newtheorem{prop}[teo]{Proposition}
\newtheorem{claim}{Claim}[section]
\theoremstyle{definition}
\newtheorem{defi}[teo]{Definition}
  \newtheorem{exa}[teo]{Example}
\theoremstyle{remark}
\newtheorem{rmk}[teo]{Remark}
\def\pf{\begin{proof}}
\def\epf{\end{proof}}
\theoremstyle{remark}
\subjclass[2010]{18D20, 18D10}
\begin{document}

\title[The adjoint algebra for 2-categories]
{The adjoint algebra for 2-categories}
\author[   Bortolussi and Mombelli  ]{ Noelia Bortolussi and Mart\'in Mombelli
 }

\keywords{tensor category; module category}
\address{Facultad de Matem\'atica, Astronom\'\i a y F\'\i sica
\newline \indent
Universidad Nacional de C\'ordoba
\newline
\indent CIEM -- CONICET
\newline \indent Medina Allende s/n
\newline
\indent (5000) Ciudad Universitaria, C\'ordoba, Argentina}
 \email{ bortolussinb@gmail.com, nbortolussi@famaf.unc.edu.ar 
 \newline \indent\emph{URL:} https://sites.google.com/view/noeliabortolussi/página-principal}
\email{martin10090@gmail.com, martin.mombelli@unc.edu.ar
\newline \indent\emph{URL:}\/ http://www.https://www.famaf.unc.edu.ar/$\sim$mombelli}

\begin{abstract} For any 0-cell $B$ in a 2-category $\Bc$ we introduce the notion of adjoint algebra $\adj_B$. This is an algebra in the center of $\Bc$. We prove that, if $\ca$ is a finite tensor category, this notion applied to the 2-category $\camod$ of $\ca$-module categories,  coincides with the one introduced by Shimizu \cite{Sh2}. As a consequence of this general approach, we obtain new results on the adjoint algebra for tensor categories.
\end{abstract}

\date{\today}
\maketitle


\section*{Introduction}
In  \cite{Sh1},  the author introduces the notion of adjoint algebra $\cha_\ca$ and the space of class functions $\cf(\ca)$ for any finite tensor category $\ca$. The adjoint algebra is defined as the end $$\cha_\ca=  \int_{X\in \ca} X\ot X^*.$$
The object $\cha_\ca$ is in fact an algebra in the Drinfeld center $Z(\ca)$.
Both, the adjoint algebra and the space of class functions, are  interesting objects that generalize the well known adjoint representation and the character algebra of a finite group.
 In \cite{Sh2}, the author introduces the notion of adjoint algebra $\cha_\Mo$ and the space of class functions $\cf(\Mo)$ associated to  any left $\ca$-module category $\Mo$, generalizing the definitions given in \cite{Sh1}. 
 \smallbreak
 
The present paper is  born in  the search of a way to compute the adjoint algebra of a finite tensor category graded by a finite group. Let $G$ be a finite group, and $\Do=\oplus_{g\in G} \ca_g$ be a $G$-graded tensor category. In this case $\ca=\ca_1$ is a tensor subcategory of $\Do$, and for each $g\in G$, $\ca_g$ is an invertible $\ca$-bimodule category. Our goal is to relate the adjoint algebras $\cha_\Do$ and $\cha_\ca$. In principle, we did not have enough intuition nor tools to achieve this. We suspected that the algebra $\cha_\Do$  is related to $\oplus_{g\in G} \cha_{\ca_g},$ and each algebra $\cha_{\ca_g}$ is related to $\cha_\ca$. Since a direct approach to the computation of $\cha_\Do$  was not successful, we had to make a plan using different tools.
 
\medbreak
Our starting point is a result obtained in \cite{BGM}. In \emph{loc. cit.}   the notion of a group action on a 2-category $\Bc$ is introduced. For a group $G$ acting on $\Bc$, it is also introduced a new 2-category  $\Bc^G$, called the $G$-\emph{equivariantization}.  As a main example, if $\Do=\oplus_{g\in G} \ca_g$ is a $G$-graded tensor category, it is shown that the group $G$ acts on the 2-category $\camod$ of $\ca$-module categories and there is a 2-equivalence of 2-categories
\begin{equation}\label{g-equi-2cat}
    (\camod)^G \simeq  {}_\Do\text{Mod}.
\end{equation}
In this way, we can relate the tensor categories $\D$ and $\ca$ using  tools from the theory of 2-categories.
Our plan, to compute the adjoint algebra $\cha_\Do$, is the following:
\begin{itemize}
    \item Generalize the notion of adjoint algebra to the realm of 2-categories, such that when applied to $\camod$ it coincides with the notion introduced by Shimizu.\smallbreak
    
    \item Study how the adjoint algebra defined on 2-categories behaves under 2-equivalences.\smallbreak
    
    \item Apply the results obtained in the previous items, and use  the biequivalence \eqref{g-equi-2cat} to present some relation between $\cha_\Do$ and $\cha_\ca$.
\end{itemize}
In the present contribution  we  focused only on the first two steps. The last step will be developed in a subsequent paper.\smallbreak

The contents of this work are organized as follows. In Section \ref{Section:prelimin}, we discuss some preliminary notions and results on ends and coends in finite categories. In Section \ref{Section:tensorcat}  we collect the necessary material on finite tensor categories and their representations that will be needed. In Section \ref{SubSection:character algebra} we recall   the definition given in \cite{Sh2} of the adjoint algebra  $\cha_\Mo$ associated to a representation $\Mo$ of a finite tensor category $\ca$. In Section \ref{Section:2-cat} we begin by recalling  the basics of the theory of 2-categories, we recall the definition of pseudonatural transformations, pseudofunctors, and we also recall the definition of the center of a 2-category $\Bc$, which is a monoidal category $Z(\Bc)$. We also introduce the notion of a \emph{rigid} 2-category; a straightforward generalization of the notion of rigid monoidal category, this is a 2-category such that any 1-cell has left and right duals.  For any finite tensor category $\ca$, the rigid 2-category of (left) $\ca$-module categories, denoted by $\camod$ is developed thoroughly; in particular we prove that the center  $Z(\camod)$ is monoidally equivalent to $Z(\ca).$ This is a crucial result, since we would like to relate the adjoint algebra of $\camod$ and the one introduced by Shimizu. Finally, in Section \ref{Section:adj-2cat}, for any rigid 2-category $\Bc$, and any 0-cell $B\in \Bc$, we introduce an algebra $\adj_B$ in the center $Z(\Bc)$. The definition of this object and its product seem to be quite natural. There is a price to pay for this simplicity; the proof that, the adjoint algebras for  $\camod$ and for $\ca$ are isomorphic, is quite cumbersome. Finally, we show that if $\Fc:\Bc\to \wbc$ is a 2-equivalence  of 2-categories, this establishes a monoidal equivalence $\widehat{\Fc}:Z(\Bc)\to Z(\wbc)$, and for any 0-cell $B$ there is an isomorphism $\widehat{\Fc}(\adj_B)\simeq \adj_{\Fc(B)}$ of algebras. We apply this result to the 2-category $\camod$ of $\ca$-module categories to obtain some results on the adjoint algebra for tensor categories.

\section{Preliminaries}\label{Section:prelimin}

Throughout this paper $\ku$ will denote an algebraically closed field. All categories are assumed to be abelian $\ku$-linear, all functors are additive $\ku$-linear, and all vector spaces and algebras are assumed to be over $\ku$.  We denote by $\Rex(\Mo,\No)$ the category of right exact functors from $\Mo$ to $\No$. If $\Mo, \No$ are two  categories, and $F:\Mo\to\No$ is a functor, we will denote by $F^{\la}, F^{\ra}: \No\to \Mo$, its left adjoint, respectively right adjoint of $F$, if it exists. If $A$ is an algebra, we  denote by ${}_A\Mod$ (respectively $\Mod_A$) the category of finite dimensional left $A$-modules (respectively right $A$-modules).
\subsection{Finite categories}
A 
\emph{finite} category \cite{EO} is a category  equivalent to a category ${}_A\Mod$ for some finite dimensional algebra $A$. Equivalently, a  category is said to be finite if it satisfies the following conditions:
\begin{itemize}
\item it has finitely many isomorphism classes of simple objects;
 \item each simple
object $X$ has a projective cover $P(X)$; \item all $\Hom$ spaces
are finite-dimensional; \item each object has finite length.
\end{itemize}

\subsection{Ends and coends}

For later use, we will need some basic results on ends and coends. For reference, the reader can find \cite{Mac}, \cite{FSS}  helpful. Let $\ca$ and $\Do$ be categories.  A {\em dinatural transformation} $d: S \xrightarrow{..} T$  between functors $S, T: \ca^{\op} \times \ca \to \Do$  is a collection of morphisms in $\Do$
\begin{align*}
  d_{X}: S(X,X) \to T(X,X), \quad X \in \ca,
\end{align*}
such that for any morphism $f: X \to Y$ in $\ca$
\begin{equation}\label{dinaturality}
  T(\id_X, f) \circ d_X \circ S(f, \id_X) = T(f, \id_Y) \circ d_Y \circ S(\id_Y, f).
\end{equation}
An {\em end} of $S$ is a pair $(E, p)$ consisting of an object $E \in \Do$  and a dinatural transformation $p: E \xrightarrow{..} S$ satisfying the following universal property. Here the object $E$ is considered as a constant functor.
For any pair $(D, d)$ consisting of an object $D\in \Do$ and a dinatural transformation $d: D \xrightarrow{..} S$, there exists a \textit{unique} morphism $h: D\to E$ in $\Do$ such that 
$$d_X = p_X \circ h \quad \text{ for any } X \in \ca.$$ A {\em coend} of $S$ is  (the dual notion of an end)   a pair $(C, \pi)$ consisting of an object $C \in \Do$ and a dinatural transformation $\pi: S \xrightarrow{..} C$ with the following universal property. For any pair $(B, t)$, where $B\in \Do$ is an object and $t: S\xrightarrow{..} B$ is a dinatural transformation, there exists a {\em unique} morphism $h:C\to B$ such that $h\circ \pi_X=t_X$ for any $X \in \ca$.

The end  and  coend of the functor $S$ are denoted, respectively, as
$$
  \int_{X \in \mathcal{C}} S(X,X)
  \quad \text{and} \quad
  \int^{X \in \mathcal{C}} S(X,X). $$

 The next results are well-known. We present the proofs for completness sake and because  we will need them later.
 \begin{prop}\label{properties-end} Let $\ca, \Do, \ca', \Do'$ be categories. Assume that $S, T:\ca^{\op}\to \Do$ are functors. Let $G:\ca'\to \ca$ be an equivalence of categories, and $F:\Do\to \Do'$ a functor with left adjoint given by $H:\Do'\to \Do$. The following statements hold.
 \begin{itemize}
 \item[(i)] The objects $F(\int_{X \in \mathcal{C}} S(X,X)), \int_{X \in \mathcal{C}} F\circ S(X,X)$ are isomorphic.
 
 \item[(ii)] The objects $\int_{X \in \mathcal{C}} S(X,X), \int_{Y \in \ca'} S(G(Y),G(Y))$ are isomorphic.
 
 \item[(iii)] If $S\simeq T$, then the ends $\int_{X \in \mathcal{C}} S(X,X), \int_{X \in \mathcal{C}} T(X,X)$  are isomorphic.
 \end{itemize}
 \end{prop}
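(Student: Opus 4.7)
The plan is to prove each item in turn, relying only on the universal property of an end and, in (i), the behaviour of right adjoints with respect to limits. Note that the three statements are standard, so the goal is to be careful with the dinaturality conditions rather than to seek any clever trick.

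For (iii), suppose $\alpha : S \Rightarrow T$ is a natural isomorphism of functors $\ca^{\op}\times \ca \to \Do$, and let $(E,p)$ be an end of $S$. Set $q_X := \alpha_{(X,X)} \circ p_X$. Dinaturality of $q$ follows from that of $p$ together with naturality of $\alpha$ in both variables. Given any dinatural $d : D \xrightarrow{..} T$, define $d'_X := \alpha_{(X,X)}^{-1}\circ d_X$; this is dinatural into $S$, so by the universal property of $(E,p)$ there is a unique $h : D \to E$ with $p_X\circ h = d'_X$, equivalently $q_X\circ h = d_X$. This is the desired universal property for $T$.

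For (ii), let $G:\ca'\to \ca$ be an equivalence and $(E,p)$ an end of $S$. Define $p'_Y := p_{G(Y)} : E\to S(G(Y),G(Y))$. Dinaturality of $p'$ along a morphism $f:Y\to Y'$ in $\ca'$ reduces to dinaturality of $p$ along $G(f)$. For the universal property, choose a quasi-inverse $G^{-1}$ with a natural isomorphism $\varepsilon : G\circ G^{-1}\Rightarrow \id_\ca$, and given $(D,d)$ with $d_Y : D\to S(G(Y),G(Y))$ dinatural, set
\begin{equation*}
\widetilde{d}_X := S(\varepsilon_X^{-1},\varepsilon_X)\circ d_{G^{-1}(X)} : D \longrightarrow S(X,X).
\end{equation*}
Naturality of $\varepsilon$ and dinaturality of $d$ together imply that $\widetilde{d}$ is a dinatural transformation $D \xrightarrow{..} S$, so there is a unique $h : D \to E$ with $p_X\circ h = \widetilde{d}_X$ for every $X$. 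Specialising at $X = G(Y)$ and using $\varepsilon_{G(Y)}\circ G(\eta_Y) = \id_{G(Y)}$ (where $\eta$ is the other half of the equivalence data) yields $p'_Y\circ h = d_Y$. Uniqueness of $h$ passes to the ends over $\ca'$ because $G$ is essentially surjective, so every object of $\ca$ is isomorphic to some $G(Y)$.

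For (i), the main step, we exploit that $F$ is a right adjoint and therefore preserves the limit that defines the end. Concretely, let $(E,p)$ be an end of $S$; applying $F$ produces a dinatural transformation $F(p) : F(E) \xrightarrow{..} F\circ S$. Given any dinatural $d : D' \xrightarrow{..} F\circ S$, let $\varepsilon^{\!H} : HF\Rightarrow\id$ and $\eta^{\!H} : \id\Rightarrow FH$ denote the adjunction unit and counit; transposing $d$ yields morphisms
\begin{equation*}
d^\flat_X := \varepsilon^{\!H}_{S(X,X)}\circ H(d_X) : H(D') \longrightarrow S(X,X).
\end{equation*}
Naturality of $\varepsilon^{\!H}$ and functoriality of $H$ translate the dinaturality condition \eqref{dinaturality} for $d$ into dinaturality of $d^\flat$. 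The universal property of $(E,p)$ then provides a unique $h : H(D')\to E$ with $p_X\circ h = d^\flat_X$; its adjunct $h^\sharp : D'\to F(E)$ is the required universal arrow, and uniqueness propagates through the adjunction bijection. The main technical obstacle is verifying dinaturality of $d^\flat$ cleanly: it is a routine diagram chase, but one must be careful that the adjunction transpose interchanges with both variables of $S$ in a compatible way. Items (ii) and (iii) are essentially bookkeeping, while (i) is the content that will actually be used later in the paper.
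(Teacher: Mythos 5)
Your proposal is correct and follows essentially the same route as the paper's proof: in (i) you transpose dinatural cones along the adjunction $H\dashv F$ exactly as the paper does with its unit $c$ and counit $e$, and in (ii) and (iii) you transport the dinatural projections via $S(\varepsilon^{-1},\varepsilon)$ and $\alpha_{(X,X)}$, which are precisely the paper's $\lambda_X$ and $d_X$. The only cosmetic difference is that you verify the universal property of a single candidate end where the paper builds two mutually inverse comparison maps; the resulting isomorphisms coincide.
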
  
  \pf (i).  Since $(H,F)$ is an adjunction, there are natural transformations $e:H\circ F\to \Id_\Do$, $c:\Id_{\Do'}\to F\circ H$ such that
 \begin{equation}\label{adjunct-din} \id_{F(Y)}= F(e_Y) c_{F(Y)}, \quad \id_{H(Z)}=e_{H(Z)} H(c_Z),
 \end{equation}
  for any $Z\in \Do'$. For any $Y\in \ca$ let $\pi_Y: \int_{X \in \mathcal{C}} S(X,X)\to S(Y,Y)$ be the dinatural transformations associated to this end. Then $F(\pi_Y):F(\int_{X \in \mathcal{C}} S(X,X)) \to F(S(Y,Y))$ are dinatural. Let us show that this object together with the dinatural transformations $F(\pi_Y)$ satisfy the universal property of the end.
  
Let $E\in \Do'$ be an object together with dinatural transformations $\xi_Y:E\to  F(S(Y,Y))$, $Y\in\ca$. Hence the composition 
$$ e_{S(Y,Y)}H(\xi_Y): H(E)\to S(Y,Y)$$
 are dinatural. Thus, there exists a map $h:H(E)\to \int_{X \in \mathcal{C}} S(X,X)$ such that $\pi_Y\circ h=e_{S(Y,Y)}H(\xi_Y)$ for any $Y\in\ca$. Define 
 $d=F(h)c_E$. Using \eqref{adjunct-din} one can see that 
 $F(\pi_Y) d=\xi_Y$. Whence $F(\int_{X \in \mathcal{C}} S(X,X))$ together with the dinatural transformations $F(\pi_Y)$ satisfy the universal property of the end.

(ii). Let $\xi_X:  \int_{X \in \mathcal{C}} S(X,X) \xrightarrow{..}  S(X,X)$, $\eta_Y: \int_{Y \in \mathcal{C}'} S(G(Y),G(Y))\xrightarrow{..} S(G(Y),G(Y)) $ be the associated dinatural transformations. Let $H:\ca\to \ca'$ be a quasi-inverse of $G$ and $\alpha: G\circ H\to \Id_\ca$ be a natural isomorphism. For any $X\in \ca$ define $\lambda_X=S(\alpha^{-1}_X,\alpha_X)\eta_{H(X)}$. Since $\lambda$ is a dinatural transformation, there exists a map $h: \int_{Y \in \mathcal{C}'} S(G(Y),G(Y))\to \int_{X \in \mathcal{C}} S(X,X)$ such that $\lambda_X=\xi_Xh$. Also, define $d_Y: \int_{X \in \mathcal{C}} S(X,X)\to  S(G(Y),G(Y))$ as $d_Y=\xi_{G(Y)}$. It follows that $d$ is a dinatural tranformation, therefore there exists a morphism $\widetilde{h}: \int_{X \in \mathcal{C}} S(X,X)\to  \int_{Y \in \mathcal{C}'} S(G(Y),G(Y))$ such that $d_Y=\eta_Y \widetilde{h}.$ We have
$$\xi_X h\widetilde{h}=S(\alpha^{-1}_X,\alpha_X)\eta_{H(X)}   \widetilde{h}= S(\alpha^{-1}_X,\alpha_X) \xi_{G(H(X))}=\xi_X.  $$ The last equality follows from the dinaturality of $\xi$. It follows, from the universal property of the end, that $h\widetilde{h}=\id$. In a similar way, one can prove that $\widetilde{h}h=\id$. Thus $h$ is an isomorphism.

(iii). Let $\alpha:S\to T$ be a natural isomorphism. Let  
$$\xi_X: \int_{X \in \mathcal{C}} S(X,X)  \xrightarrow{..}   S(X,X),\,\, \eta_X: \int_{X \in \mathcal{C}} T(X,X)  \xrightarrow{..} T(X,X),$$ be the corresponding dinatural transformations. For any $X\in \ca$ define $d_X= \alpha_{(X,X)} \xi_X$. It follows that $d$ is a dinatural transformation, hence there exists a morphism $h: \int_{X \in \mathcal{C}} S(X,X) \to \int_{X \in \mathcal{C}} T(X,X) $ such that $d_X= \eta_X\circ h$. In a similar way, one can construct a morphis $\widetilde{h}: \int_{X \in \mathcal{C}} T(X,X) \to \int_{X \in \mathcal{C}} S(X,X)$ such that $\alpha^{-1}_{(X,X)}\eta_X=\xi_X \widetilde{h}$. Combining both equalities one gets that $\xi_X \widetilde{h} h=\xi_X$. By the universal property of the end $\widetilde{h} h=\id$. In a similar way it can be proven that $h\widetilde{h}=\id$.
  \epf

 \section{Finite tensor categories}\label{Section:tensorcat}
  For basic notions on finite tensor categories we refer to \cite{EGNO}, \cite{EO}. Let $\C$ be a finite tensor category over $\ku$, that is a rigid monoidal category with simple unit object such that the underlying category is finite. 
  
  If $\ca$ is a tensor category with associativity constraint given by $a_{X,Y,Z}:(X\ot Y)\ot Z\to X\ot (Y\ot Z) $, we will denote by $\ca^{\rev}$, the tensor category whose underlying abelian category is $\ca$, with \textit{reverse} monoidal product $\ot^{\rev}:\ca\times \ca\to \ca$, $X\ot^{\rev }Y=Y\ot X$, and associativity constraints
 $$a^{\rev}_{X,Y,Z}:( X\ot^{\rev }Y) \ot^{\rev} Z\to  X\ot^{\rev }(Y \ot^{\rev} Z),$$
  $$a^{\rev}_{X,Y,Z}:= a^{-1}_{Z,Y,X},$$
  for any $X,Y, Z\in \ca$.
 \subsection{Representations of tensor categories} A left \emph{module} category over 
$\ca$ is a  finite  category $\Mo$ together with a $\ku$-bilinear 
bifunctor $\otb: \ca \times \Mo \to \Mo$, exact in each variable,  endowed with 
 natural associativity
and unit isomorphisms 
$$m_{X,Y,M}: (X\otimes Y)\otb M \to X \otb
(Y\otb M), \ \ \ell_M: \uno \otb M\to M.$$ These isomorphisms are subject to the following conditions:
\begin{equation}\label{left-modulecat1} m_{X, Y, Z\otb M}\; m_{X\otimes Y, Z,
M}= (\id_{X}\otb m_{Y,Z, M})\;  m_{X, Y\otimes Z, M}(a_{X,Y,Z}\otb\id_M),
\end{equation}
\begin{equation}\label{left-modulecat2} (\id_{X}\otb l_M)m_{X,{\bf
1} ,M}= r_X \otb \id_M,
\end{equation} for any $X, Y, Z\in\C$, $M\in\Mo.$ Here $a$ is the associativity constraint of $\C$.
Sometimes we shall also say  that $\Mo$ is a  $\ca$-\emph{module category}.

\medbreak

Let $\Mo$ and $\Mo'$ be a pair of $\C$-module categories. A\emph{ module functor} is a pair $(F,c)$, where  $F:\Mo\to\Mo'$  is a functor equipped with natural isomorphisms
$$c_{X,M}: F(X\otb M)\to
X\otb F(M),$$ $X\in  \ca$, $M\in \Mo$,  such that
for any $X, Y\in
\ca$, $M\in \Mo$:
\begin{align}\label{modfunctor1}
(\id_X \otb  c_{Y,M})c_{X,Y\otb M}F(m_{X,Y,M}) &=
m_{X,Y,F(M)}\, c_{X\otimes Y,M}
\\\label{modfunctor2}
\ell_{F(M)} \,c_{\uno ,M} &=F(\ell_{M}).
\end{align}

There is a composition
of module functors: if $\Mo''$ is another $\C$-module category and
$(G,d): \Mo' \to \Mo''$ is another module functor then the
composition
\begin{equation}\label{modfunctor-comp}
(G\circ F, e): \Mo \to \Mo'', \qquad  e_{X,M} = d_{X,F(M)}\circ
G(c_{X,M}),
\end{equation} is
also a module functor.

\smallbreak  

A \textit{natural module transformation} between  module functors $(F,c)$ and $(G,d)$ is a 
 natural transformation $\theta: F \to G$ such
that for any $X\in \ca$, $M\in \Mo$:
\begin{gather}
\label{modfunctor3} d_{X,M}\theta_{X\otb M} =
(\id_{X}\otb \theta_{M})c_{X,M}.
\end{gather}
 Two module functors $F, G$ are \emph{equivalent} if there exists a natural module isomorphism
$\theta:F \to G$. We denote by $\Fun_{\ca}(\Mo, \Mo')$ the category whose
objects are module functors $(F, c)$ from $\Mo$ to $\Mo'$ and arrows are module natural transformations. 

\smallbreak
Two $\C$-modules $\Mo$ and $\Mo'$ are {\em equivalent} if there exist module functors $F:\Mo\to
\Mo'$, $G:\Mo'\to \Mo$, and natural module isomorphisms
$\Id_{\Mo'} \to F\circ G$, $\Id_{\Mo} \to G\circ F$.

A module is
{\em indecomposable} if it is not equivalent to a direct sum of
two non trivial modules. Recall from \cite{EO}, that  a
module $\Mo$ is \emph{exact} if   for any
projective object
$P\in \ca$ the object $P\otb M$ is projective in $\Mo$, for all
$M\in\Mo$. It is known that if $\Mo$ is an exact module category, then any module functor $F:\Mo\to \No$ is exact. If $\Mo$ is an exact indecomposable module category over $\ca$, the dual category $\ca^*_\Mo=\End_\ca(\Mo)$ is a finite tensor category \cite{EO}. The tensor product is the composition of module functors.

 A \emph{right module category} over $\ca$
 is a finite  category $\Mo$ equipped with an exact
bifunctor $\otb:  \Mo\times  \ca\to \Mo$ and natural   isomorphisms 
$$\widetilde{m}_{M, X,Y}: M\otb (X\ot Y)\to (M\otb X) \otb Y, \quad r_M:M\otb \uno\to M$$ such that
\begin{equation}\label{right-modulecat1} \widetilde{m}_{M\otb X, Y ,Z }\; \widetilde{m}_{M,X ,Y\ot Z } (\id_M \otb a_{X,Y,Z})=
(\widetilde{m}_{M,X , Y}\ot \id_Z)\, \widetilde{m}_{M,X\ot Y ,Z },
\end{equation}
\begin{equation}\label{right-modulecat2} (r_M\otb \id_X)  \widetilde{m}_{M,\uno, X}= \id_M\otb l_X.
\end{equation}

If $\Mo,  \Mo'$ are right $\ca$-modules, a module functor from $\Mo$ to $  \Mo'$ is a pair $(T, d)$ where
$T:\Mo \to \Mo'$ is a  functor and $d_{M,X}:T(M\otb X)\to T(M)\otb X$ are natural  isomorphisms
such that for any $X, Y\in
\ca$, $M\in \Mo$:
\begin{align}\label{modfunctor11}
( d_{M,X}\ot \id_Y)d_{M\otb X, Y}T(m_{M, X, Y}) &=
m_{T(M), X,Y}\, d_{M, X\ot Y},
\\\label{modfunctor21}
r_{T(M)} \,d_{ M,\uno} &=T(r_{M}).
\end{align}

It is also well-known that if $F:\Mo\to \No$ is a $\ca$-module functor then its right and left adjoint are also $\ca$-module functors, see for example \cite[Lemma 2.11]{DSS}.

\subsection{Bimodule categories}

Assume that $\ca, \Do$ are finite  tensor categories. A $(\ca, \Do)-$\emph{bimodule category}  is an abelian category $\Mo$  with left $\ca$-module category
 and right $\Do$-module category  structure
equipped with natural
isomorphisms $$\{\gamma_{X,M,Y}:(X\otb M) \otb Y\to X\otb (M\otb Y), X\in\ca, Y\in\Do, M\in \Mo\}$$ satisfying 
certain axioms. For details the reader is referred to \cite{Gr}.

If $\Mo$ is a   right $\ca$-module category then the opposite category $\Mo^{\op}$ has a left $\ca$-action given by 
$\triangleleft:\ca\times\Mo^{\op} \to \Mo^{\op},$ $(X, M)\mapsto  M\triangleleft\, X^*.$ Similarly, if $\Mo$ is a left  $\ca$-module category then $\Mo^{\op}$ has structure of right $\ca$-module category, with action given by 
$\Mo^{\op}\times \ca\to  \Mo^{\op},$  $(M,X)\mapsto X^*\triangleright M$.
If  $\Mo$ is a $(\ca,\Do)$-bimodule category then $\Mo^{\op}$ is a 
$(\Do,\ca)$-bimodule category. 

Assume $\Ec$ is another finite tensor category. Let $\Mo$ be a $(\Do, \ca)-$bimodule category and $\No$ is a  $(\ca,\Ec)$-bimodule category.  Then the Deligne tensor product $\Mo\boxtimes_\ca\No$ is a left $(\Do,\Ec)$-bimodule category. More details on Deligne's tensor product can be found in \cite{Gr}.

The following definition appeared in \cite{ENO}.
\begin{defi} A $(\Do, \ca)-$bimodule category $\Mo$ is \textit{invertible} if there exists a $(\ca,\Do)$-bimodule category $\No$ such that
$$ \Mo\boxtimes_\ca\No \simeq \Do, \quad  \No\boxtimes_\Do\Mo\simeq \ca,$$
as bimodule categories.
\end{defi}
\begin{prop}\label{funct-mod}\cite[Thm. 3.20]{Gr}  If $\Mo, \No$ are left $\ca$-module categories, there exists an equivalence of categories
\begin{equation}\label{equival-tens-prod-of-modcat}\Mo^{\op}\boxtimes_\ca\No \simeq \Fun_\ca(\Mo, \No).\qed
\end{equation}
\end{prop}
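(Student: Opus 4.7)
The plan is to exhibit the equivalence via the universal property of the relative Deligne tensor product $\boxtimes_\ca$. First, I would define a $\ku$-bilinear functor $B: \Mo^{\op} \times \No \to \Fun_\ca(\Mo, \No)$ on objects by $B(M,N) = \uhom_\ca(M, -) \otb N$, where $\uhom_\ca(M, -): \Mo \to \ca$ is the right adjoint of $- \otb M: \ca \to \Mo$ (which exists by finiteness of $\Mo$). This right adjoint is automatically a left $\ca$-module functor, so post-composing with $- \otb N: \ca \to \No$ produces a bona fide object of $\Fun_\ca(\Mo, \No)$. The assignment is manifestly bilinear and right-exact in each variable.

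Next, I would equip $B$ with a $\ca$-balancing: for $X \in \ca$, $M \in \Mo^{\op}$, $N \in \No$, a natural isomorphism $B(M \triangleleft X, N) \simeq B(M, X \otb N)$ is forced by adjunction from the module associator of $\Mo$, yielding $\uhom_\ca(X^* \triangleright M, -) \simeq X \ot \uhom_\ca(M, -)$. Verifying the coherence pentagons for this balancing is a direct but tedious diagram chase using the module associators of $\Mo$ and $\No$. By the universal property of $\boxtimes_\ca$, the balanced functor $B$ extends to a right-exact $\ku$-linear functor $\widetilde{B}: \Mo^{\op} \boxtimes_\ca \No \to \Fun_\ca(\Mo, \No)$, which is the candidate equivalence.

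To show $\widetilde{B}$ is an equivalence, I would construct a quasi-inverse via a coend, setting $\Psi(F) = \int^{M \in \Mo} M^{\op} \boxtimes F(M)$ computed inside $\Mo^{\op} \boxtimes_\ca \No$, with dinaturality supplied by the module structure of $F$. Then $\widetilde{B} \circ \Psi \simeq \id$ reduces to the co-Yoneda-type identity $F(-) \simeq \int^{M \in \Mo} \uhom_\ca(M, -) \otb F(M)$ in $\Fun_\ca(\Mo, \No)$, while $\Psi \circ \widetilde{B} \simeq \id$ can be checked on elementary tensors $M^{\op} \boxtimes N$ and extended by right-exactness. The main obstacle I expect is verifying that the coend defining $\Psi$ actually exists in the relative product $\Mo^{\op} \boxtimes_\ca \No$ (as opposed to merely in the unrestricted product $\Mo^{\op} \boxtimes \No$) and descends correctly under the universal quotient; equivalently, one must show that the $\ca$-balancing of $B$ is compatible with the $\ca$-relative coend calculus. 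This is precisely the technical heart of \cite{Gr} and is where the bulk of the effort is concentrated.
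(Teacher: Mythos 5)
The first thing to note is that the paper does not prove this proposition at all: it is imported verbatim from Greenough with a \qed, so there is no internal argument to compare against and your sketch has to stand on its own. Its architecture --- a balanced functor $B(M,N)=\uhom(M,-)\otb N$, descent through the universal property of $\boxtimes_\ca$, and a coend quasi-inverse $F\mapsto\int^{M\in\Mo}M\boxtimes F(M)$ verified via co-Yoneda --- is indeed the standard route and essentially the one followed in \cite{Gr}, so the overall plan is sound.

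The genuine gap is in the one step on which the descent to $\Mo^{\op}\boxtimes_\ca\No$ rests: the balancing isomorphism is false as you have stated it, at least with the conventions of this paper. Here the right $\ca$-action on $\Mo^{\op}$ is $M\triangleleft X=X^{*}\otb M$, and the paper's own isomorphism $\mathfrak{b}^{1}$ reads $\uhom(X\otb M,L)\simeq\uhom(M,L)\ot X^{*}$; substituting $X^{*}$ for $X$ gives $\uhom(X^{*}\otb M,L)\simeq\uhom(M,L)\ot X^{**}$, whereas the balancing $B(M\triangleleft X,N)\simeq B(M,X\otb N)$ requires $\uhom(M,L)\ot X$, with $X$ acting on the \emph{right} of $\uhom(M,L)$ (because $\uhom(M,L)\otb(X\otb N)\simeq(\uhom(M,L)\ot X)\otb N$). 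Your claimed isomorphism $\uhom(X^{*}\triangleright M,-)\simeq X\ot\uhom(M,-)$ is therefore off on two counts: the tensor factor lands on the wrong side, and it is $X^{**}$ rather than $X$. Since a finite tensor category need not be pivotal, the discrepancy $X^{**}\not\simeq X$ cannot be waved away; one must either take the right action on $\Mo^{\op}$ to be $M\triangleleft X={}^{*}X\otb M$ (for which $\uhom({}^{*}X\otb M,L)\simeq\uhom(M,L)\ot X$ does hold) or build the double-dual functor into $B$, and without one of these fixes $B$ simply does not descend to the relative product. Two smaller gaps: the right exactness of $\uhom(M,-)$ and $\uhom(-,L)$, which you call manifest, requires an argument when $\Mo$ is not exact (a right adjoint is only left exact in general, and $\uhom(M,-)$ is exact precisely when $-\otb M$ preserves projectives); and verifying $\Psi\circ\widetilde{B}\simeq\id$ ``on elementary tensors and extending by right-exactness'' is not automatic, since objects of $\Mo^{\op}\boxtimes_\ca\No$ are not all of the form $M\boxtimes N$ and a natural isomorphism is not determined by its values on a generating family of objects without also controlling its behaviour on morphisms.
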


\subsection{The internal Hom}\label{subsection:internal hom} Let $\ca$ be a  tensor category and $\Mo$ be  a left $\C$-module category. For any pair of objects $M, N\in\Mo$, the \emph{internal Hom} is an object $\uhom(M,N)\in \C$ representing the functor $\Hom_{\Mo}(-\otb M,N):\ca\to \vect_\ku$. This means that there are natural isomorphisms, one the inverse of each other,
\begin{equation}\label{Hom-interno}\begin{split}\phi^X_{M,N}:\Hom_{\ca}(X,\uhom(M,N))\to \Hom_{\Mo}(X\otb M,N), \\
\psi^X_{M,N}:\Hom_{\Mo}(X\otb M,N)\to \Hom_{\ca}(X,\uhom(M,N)),
\end{split}
\end{equation}
 for all $M, N\in \Mo$, $X\in\ca$. If $N, \widetilde{N}\in \Mo$, and $f:N\to  \widetilde{N}$ is a morphism, naturality of $\phi$ implies that diagramm
\begin{equation*}
\xymatrix{
\Hom_{\ca}(X,\uhom(M,N))\ar[d]^{\beta \mapsto \uhom(M,f)\beta}\ar[rr]^{\phi^X_{M,N}}&& \Hom_{\Mo}(X\otb M,N) \ar[d]_{\alpha\mapsto f\alpha} \\
\Hom_{\ca}(X,\uhom(M,\widetilde{N})) \ar[rr]^{\phi^X_{M,\widetilde{N}}}&& \Hom_{\Mo}(X\otb M,\widetilde{N}) }
\end{equation*}
commutes. That is
\begin{equation}\label{nat-phi-oneside} f \phi^X_{M,N}(\beta)= \phi^X_{M,\widetilde{N}}(\uhom(M,f)\beta).
\end{equation}
If $\widetilde{X}\in\ca$ and $h:X\to \widetilde{X}$ then, the naturality of $\phi$ implies that the diagram
\begin{equation*}
\xymatrix{
\Hom_{\ca}(\widetilde{X},\uhom(M,N))\ar[d]^{\alpha \mapsto \alpha h}\ar[rr]^{\phi^{\widetilde{X}}_{M,N}}&& \Hom_{\Mo}(\widetilde{X}\otb M,N) \ar[d]_{\alpha\mapsto \alpha (h\otb \id_M)} \\
\Hom_{\ca}(X,\uhom(M,N)) \ar[rr]^{\phi^X_{M,N}}&& \Hom_{\Mo}(X\otb M,N) }
\end{equation*}
commutes. That is
\begin{equation}\label{nat-phi-oneside2}   \phi^X_{M,N}(\alpha)(h\otb \id_M)= \phi^X_{M,N}(\alpha h),
\end{equation}
for any $\alpha\in \Hom_{\ca}(\widetilde{X},\uhom(M,N))$. Also, the naturality of $\psi$ implies that for any $X, \widetilde{X}\in\ca$, and any morphism $\gamma:X\to \widetilde{X}$ the diagramm
\begin{equation*}
\xymatrix{
 \Hom_{\Mo}(\widetilde{X}\otb M,N)\ar[d]^{\alpha \mapsto \alpha(\gamma\otb\id_M)}\ar[rr]^{\psi^{\widetilde{X}}_{M,N}}&&\Hom_{\ca}(\widetilde{X},\uhom(M,N))  \ar[d]_{\alpha\mapsto \alpha\gamma} \\
 \Hom_{\Mo}(X\otb M,N) \ar[rr]^{\psi^X_{M,N}}&&\Hom_{\ca}(X,\uhom(M,N))}
\end{equation*}
commutes. That is
\begin{equation}\label{nat-psi-oneside}\psi^X_{M,N}(\alpha(\gamma\otb\id_M))=  \psi^{\widetilde{X}}_{M,N}(\alpha) \gamma,
\end{equation}
For any $\alpha\in \Hom_{\Mo}(\widetilde{X}\otb M,N).$
If $X\in \ca$, $M, N\in \Mo$, define 
$$\coev^\Mo_{X, M}:X\to  \uhom(M,X\otb M), \quad \ev^\Mo_{M, N}:\uhom(M,N)\otb M\to N,$$
$$\coev^\Mo_{X, M}=\psi^X_{M,X\otb M}(\id_{X\otb M}),  \quad\ev^\Mo_{M, N}= \phi^{\uhom(M,N)}_{M,N}(\id_{\uhom(M,N)}).$$

Define also $f_M=\ev^\Mo_{M,M}(\id_{\uhom(M,M)}\otb \ev^\Mo_{M, M})$, and
\begin{equation}\label{compositionM}
\comp^\Mo_M:\uhom(M,M)\ot \uhom(M,M)\to \uhom(M,M),
\end{equation}
$$\comp^\Mo_M=\psi^{\uhom(M,M)\ot \uhom(M,M)}_{M,M}(f_M). $$
It is known, see \cite{EO}, that $\uhom(M,M)$ is an algebra in the category $\ca$ with product given by $\comp^\Mo_M.$

For any $M\in \Mo$ denote by $R_M:\ca\to \Mo$ the functor $R_M(X)=X\otb M$. This is a $\ca$-module functor. Its right adjoint $R^{ra}_M: \Mo\to \ca$ is then $ R^{\ra}_M(N)=\uhom(M,N)$. Since the functor $R_M$ is a module functor, then so is $ R^{\ra}_M$.  We denote by
\begin{equation}\label{mod-struct-a}
  \mathfrak{a}_{X,M,N}: \uhom(M, X\otb N) \to X\ot \uhom(M,  N)
\end{equation}
the left $\ca$-module structure of $R^{\ra}_M$.

Let $ \mathfrak{b}^1_{X,M,N}:\uhom(X\otb M, N)\to \uhom(M,N)\ot X^* $  
be the isomorphisms induced by the natural isomorphisms
\begin{gather*}
  \Hom_\ca(Z, \uhom(X \otb M, N))
  \simeq \Hom_{\mathcal{M}}(Z \otb( X \otb M), N)\\ \simeq \Hom_{\mathcal{M}}((Z \ot X)\otb M, N)
  \simeq \Hom_{\mathcal{C}}(Z \otimes X, \uhom(M, N))\\
  \simeq \Hom_{\mathcal{C}}(Z, \uhom(M, N) \otimes X^*),
\end{gather*}
for any $X, Z\in \ca$, $M, N\in \Mo$. Define also
$$ \mathfrak{b}_{X,M,N}:\uhom(X\otb M, N)\ot X\to \uhom(M,N)$$ 
as the composition
$$\mathfrak{b}_{X,M,N}=(\id\ot \ev_X)  (\mathfrak{b}^1_{X,M,N}\ot \id_X).$$
\subsection{The Drinfeld center} Assume that $\ca$ is a finite tensor category with associativity constraint given by $a$. The Drinfeld center of $\ca$ is the category $Z(\ca)$ consisting of pairs $(V,\sigma)$, where $V\in \ca$, and $\sigma_X:V\ot X\to X\ot V$ is a family of natural isomorphisms such that 
\begin{equation}\label{braid-cent0} l_V\sigma_{\uno}=r_V,
\sigma_{X\ot Y}= a^{-1}_{X,Y,V}(\id_{X}\ot \sigma_Y)a_{X,V,Y}(\sigma_X\ot \id_{Y})a^{-1}_{V,X,Y},
\end{equation}
for any $X, Y\in \ca$. If $(V,\sigma)$, $(W,\tau)$ are objects in the center, a morphism $f:
(V,\sigma)\to (W,\tau)$ is a morphism $f:V\to W$ in $\ca$ such that
\begin{equation}\label{0braid-cent1} (\id_X\ot f)\sigma_X=\tau_X (f\ot \id_X),
\end{equation}
for any $X\in\ca$. For a description of the braided monoidal structure of $Z(\ca)$ the reader is referred to \cite{Ka}. The categories $Z(\ca)^{\rev} $ and $Z(\ca^{\rev} )$ are monoidaly equivalent. The functor 
\begin{equation}\label{monoidal-center-rev}
Z(\ca)^{\rev}  \to Z(\ca^{\rev}), \quad (V,\sigma)\mapsto (V,\sigma^{-1})
\end{equation}
is a monoidal equivalence.
\section{The character algebra  for tensor categories }\label{SubSection:character algebra}

Let $\ca$ be a finite tensor category, and let $\Mo$ be an exact indecomposable left module category over $\ca$. We will recall the definition of the adjoint algebra and the space of class functions of $\Mo$ introduced by Shimizu \cite{Sh2}.

Let  $\rho_\Mo:\ca\to \Rex(\Mo),$ $\rho_\Mo(X)(M)=X\otb M,  X\in \ca, M\in \Mo,$ be the action functor.
By \cite[Thm. 3.4]{Sh2}  the right adjoint of $\rho_\Mo$ is the functor $\rho_\Mo^{\ra}:  \Rex(\Mo)\to\ca$, such that for any $F\in \Rex(\Mo)$
$$ \rho_\Mo^{\ra}(F)= \int_{M\in \Mo} \uhom(M, F(M)).$$

The \textit{adjoint algebra} of the module category $\Mo$ is defined as $$\cha_\Mo:=\rho_\Mo^{\ra}(\Id_\Mo)=\int_{M\in \Mo} \uhom(M, M)\in \ca.$$
Assume that $\pi^\Mo_M:\cha_\Mo\xrightarrow{ .. } \uhom(M, M)$ are the  dinatural transformations of the end $\int_{M\in \Mo} \uhom(M, M)$. Turns out that $\cha_\Mo$ is in fact an algebra object in the center $Z(\ca)$. The half braiding of $\cha_\Mo$ is  $\sigma^\Mo_X: \cha_\Mo\ot X\to X\ot \cha_\Mo$. It is defined as the unique isomorphism such that the diagram
 \begin{equation}\label{half-braidig-ch}
    \xymatrix@C=90pt@R=16pt{
      \cha_\Mo \otimes X
      \ar[r]^-{\pi^\Mo_{X \otb M} \otimes \id_X}
      \ar[dd]_{\sigma^{\Mo}_X}
      & \uhom(X \otb M,  X \otb M) \otimes X
      \ar[d]^{\mathfrak{b}_{X, M, X \otb M}} \\
      & \uhom(M,  X \otb M)
      \ar[d]^{ \mathfrak{a}_{X,M,M}} \\
      X \otimes \cha_\Mo
      \ar[r]^-{\id_X \otimes \pi^\Mo_M}
      & X \ot \uhom(M, M).
    }
  \end{equation}
is commutative. Recall from Section \ref{subsection:internal hom} the definition of the morphisms $\mathfrak{b}, \mathfrak{a}$.
It was explained in \cite[Subection 4.2]{Sh2} that the algebra structure of $\cha_\Mo$ is given as follows.  The product and the unit of $\cha_\Mo$ are 
$$m_\Mo:\cha_\Mo\ot \cha_\Mo\to \cha_\Mo, \quad u_\Mo:\uno\to \cha_\Mo,$$
defined to be the unique morphisms such that they satisfy
\begin{equation}\label{product-cha-M}
\begin{split}\pi^\Mo_M\circ m_\Mo= \comp^\Mo_{M}\circ (\pi^\Mo_M\ot \pi^\Mo_M),\\ \pi^\Mo_M\circ  u_\Mo= \coev^\Mo_{\uno, M},\end{split}
\end{equation}

for any $M\in \Mo$. For the definition of $\coev^\Mo$ and $ \comp^\Mo$ see Section \ref{subsection:internal hom}.

\medbreak
The \textit{adjoint algebra of the tensor category }$\ca$ is the algebra $\cha_\ca$ of the regular module category $\ca$.
\begin{defi}\label{definition:classfunct}\cite[Definition 5.1]{Sh2} The \textit{space of class functions} of $\Mo$ is $\cf(\Mo):=\Hom_{Z(\ca)}( \cha_\Mo,  \cha_\ca)$.
\end{defi}
\section{2-categories}\label{Section:2-cat}
 
We will first recall basic notions of the theory of 2-categories. 
For any 2-category $\Bc$, the class of \textit{0-cells}, will be denoted by $\Bc^0$. The composition in each hom-category $\Bc(A, B)$, is denoted by juxtaposition $fg$, while the symbol $\circ$ is used to denote the horizontal
composition functors $$\circ : \Bc(B, C) \times \Bc(A, B) \to \Bc(A, C).$$ For any 0-cell $A$ the identity 1-cell
is $I_A : A \to A$. For any 1-cell $X$ the identity will be denoted $\id_X$ or sometimes simply as $1_X$, when space saving measures are needed.

A 1-cell $X\in \Bc(A,B)$ is an \textit{equivalence} if there exists another 1-cell $Y\in  \Bc(B,A)$ such that 
$$X\circ Y\simeq I_B, \quad Y\circ X\simeq I_A.$$
Two 0-cells $A, B\in \Bc^0$ are \textit{equivalent}  if there exists a 1-cell equivalence $X\in  \Bc(A,B)$.

Assume $\wbc$ is another 2-category. A \textit{pseudofunctor}  $(F,\alpha):\Bc\to \wbc$, consists of a function $F:\Bc^0\to \wbc^0$, a family of functors $F_{A,B}:\Bc(A,B)\to \wbc(F(A),F(B)),$ for each $A, B\in \Bc^0$, and a collection of natural isomorphisms
$$\alpha_{A,B,C}:\circ^{F(A),F(B),F(C)}(F_{B,C}\times F_{A,B})\to F_{A,C} \circ^{A,B,C},$$
$$\phi_A: I_{F(A)}\to F_{A,A}(I_A),$$ such that
\begin{equation}\label{pseudofunc-axiom1}
\alpha_{X\circ Y,Z}(\alpha_{X,Y}\circ \id_{F(Z)})= \alpha_{X,Y\circ Z} (\id_{F(X)}\circ \alpha_{Y,Z}),
\end{equation}
\begin{equation}\label{pseudofunc-axiom2}\phi_B \circ \id_{F(X)}=\alpha_{I_B,X},\quad \id_{F(X)} \circ \phi_A=\alpha_{X,I_A}, \end{equation}
for any 0-cells $A, B, C$ and 1-cells $X, Y, Z$. A pseudofunctor is said to be a \textit{2-functor} if $\alpha$ and $\phi$ are identities.

Assume that  $(F,\alpha)$, $(G,\alpha')$ are pseudofunctors. A\textit{ pseudonatural transformation} $\chi:F\to G$
consists of a family of 1-cells $\chi^0_A: F(A)\to G(A)$, $A\in \operatorname{Obj}(\Bc)$ and isomorphisms 2-cells
$$\chi_X:G(X)\circ \chi^0_A\longrightarrow \chi^0_B\circ F(X), $$
natural in $X\in \Bc(A,B)$, such that
for any 1-cells  $X\in\Bc(B,C), Y\in \Bc(A,B)$
\begin{equation}\label{pseudonat-def1}
(\chi_X\circ \id_{F(Y)}) (\id_{G(X)}\circ \chi_Y)(\alpha'_{X,Y}\circ \id_{\chi^0_A})= 
(\id_{\chi^0_C}\circ \alpha_{X,Y})\chi_{X\circ Y},
\end{equation}
\begin{equation}\label{pseudonat-def2}
\chi_{I_A}(\id_{\chi^0_A}\circ \phi_A)=\phi'_A\circ\id_{\chi^0_A}.
\end{equation}
If $F:\Bc\to \wbc$ is a pseudofunctor, the identity pseudonatural transformation $\id:F\to F$ is defined as
$$(\id_F)^0_A=I_{F(A)},\,\, (\id_F)_X=\id_{F(X)},$$
for any 0-cells $A, B\in \Bc^0$, and any 1-cell $X\in \Bc(A,B)$.

If $\chi, \theta$ are pseudonatural transformations, a \textit{modification} $\omega:\chi\Rightarrow \theta$
consists of a family of 2-cells $\omega_A:\chi^0_A\to \theta^0_A$,  such that the diagrams
$$
\xymatrix{
G(X)\circ \chi^0_A  \ar[d]_{\operatorname{id}_{G(X)}\circ \omega_A} \ar[r]^{\chi_X} & \chi^0_B\circ F(X)  \ar[d]^{\omega_B\circ \operatorname{id}_{F(X)} }\\
G(X)\circ \theta^0_A \ar[r]^{\theta_X} & \theta^0_B\circ F(X) 
}
$$
commute for all $X\in \Bc(A,B)$.  Given pseudofunctors $F,G:\Bc\to \wbc$, we will denote $\operatorname{PseuNat}(F,G)$ the category where objects are pseudonatural transformations from $F$ to $G$ and arrows are modifications.

Two pseudonatural transformations
$(\eta, \eta^0), (\sigma, \sigma^0)$ are \emph{equivalent}, and we denote this by $(\eta, \eta^0) \sim(\sigma, \sigma^0)$,
if there exists an invertible modification $\gamma: (\eta, \eta^0)\to  (\sigma, \sigma^0)$. 

Assume that  $\Bc_i$, $i=1,2$ are 2-categories,  $F_i:\Bc_1\to \Bc_2$, $i=1,2,3$  are pseudofunctors 
and $(\sigma, \sigma^0):F_1\to  F_2$, $(\theta, \theta^0):F_2\to  F_3$  are pseudonatural transformations. The \emph{horizontal composition} $(\tau, \tau^0)=(\theta, \theta^0)\circ (\sigma, \sigma^0):F_1\to  F_3$ is the pseudonatural transformation given by
\begin{equation}\label{composition-pseudonat}\tau^0_A=\theta^0_A\circ  \sigma^0_A, \quad \tau_X=(\id_{\theta^0_B}\circ \sigma_X)(\theta_X\circ \id_{\sigma^0_A}),
\end{equation}
for any pair of 0-cells $A, B$ and any 1-cell $X\in \Bc_1(A,B)$. 

If $F:\Bc\to\Bc'$ y $G:\Bc\to\Bc'$ are pseudofunctors,  we say that a pseudonatural transformation $(\theta, \theta^0):F\to G$ is an  \emph{equivalence} if there exists  a pseudonatural transformation $(\theta', \theta'^0):G\to F$ such that $(\theta, \theta^0)\circ (\theta', \theta'^0)\sim\id_G$ and $(\theta', \theta'^0)\circ(\theta, \theta^0)\sim\id_F$. In such case,  we say that $F$ and $G$ are \emph{equivalent } and we denote it by  $F \sim G$.

We say that two 2-categories $\Bc$ y $\Bc'$ are \emph{biequivalent} if there exists pseudo\-functors
 $F:\Bc\to\Bc'$ and $G:\Bc'\to\Bc$ such that
$G\circ F \sim \Id_{\Bc}$, $F\circ G\sim \Id_{\Bc'}$. In such case, we say that  $F$ and $G$ are \emph{biequivalences}. It is well known that $F:\Bc\to\Bc'$ is a biequivalence if and only if $F_{A,B}$ are equivalences of categories for any 0-cells $A, B$, and $F$ is surjective (up to equivalence) on 0-cells.

\begin{exa}\label{exa2-cat-monoidal} If $\ca$ is a strict monoidal category, we denote by $\underline{\ca}$ the 2-category with a single 0-cell $\star$ and  $\underline{\ca}(\star, \star)=\ca$. 
The horizontal composition is given by the monoidal product of $\ca$. 
\end{exa}

 The next remark is a generalization of a result in the theory of tensor categories and it will be used later.
\begin{rmk}\label{interchange}
If $\Bc$ is a 2-category, let $A$ be a 0-cell and $X \in \Bc(A,A)$ a 1-cell. For any pair of 2-cells $f: X \rightarrow I_A$ and $g : I_A \rightarrow X$ we have 
$$\id_{I_A} \circ f = f = f \circ \id_{I_A} \quad \text{and} \quad  g f = (f \circ \id_X)(\id_X \circ g).$$ Indeed,
$$ gf = (\id_{I_A} \circ g)(f \circ \id_{I_A}) = (f \circ \id_X)(\id_X \circ g). $$
This implies that if $f:I_A\to X$ is an isomorphism 2-cell, then
$$ \id_X\circ f=f\circ \id_X.$$
\end{rmk}

\subsection{Finite 2-categories} We will introduce the notion of finite rigid 2-categories. This definition is an analogue of a finite rigid monoidal category. A similar definition appears in \cite{MM1, MM2}.

Let $\Bc$ be a 2-category and  $X\in\Bc(A,B)$ a 1-cell. A \textit{right dual} of $X$ is a 1-cell 
$X^*\in\Bc(B,A)$ equipped with 2-cells
$$ev_X:X^*\circ X\to I_A, \quad coev_X:I_B\to X\circ X^*,$$
such that the compositions
$$ X= I_B \circ X\xrightarrow{\;\;coev_X\circ \id\;\;}
X\circ X^*\circ X
\xrightarrow{\;\;\id_X\circ ev_X\;\;}X\circ I_A =X,
$$
$$
X^*= X^*\circ I_B
 \xrightarrow{\;\id\circ coev_X\;} X^*\circ X\circ X^* 
\xrightarrow{\;\; ev_X\circ \id\;\;}I_A \circ X^*
=X^*
$$
are the identities. Analogously, one can define the \textit{left dual} of $X$ as an object $ {}^*X\in\Bc(B,A)$ equipped with 2-cells
$$ev_X:X\circ  {}^*X\to I_B, \quad  coev_X:I_A\to {}^*X\circ X$$
satisfying similar axioms.
 We say that $\Bc$ is \textit{rigid}  if any 1-cell has right and left duals.

\begin{rmk} A rigid 2-category with a single 0-cell is a strict monoidal rigid category.
\end{rmk}

\begin{defi} A \textit{finite 2-category} is a rigid 2-category $\Bc$ such that $\Bc(A,B)$ is a finite category, for any 0-cells $A,B$, and $I_A\in \Bc(A,A)$ is a simple object for any 0-cell $A$.
\end{defi}

In the  next Lemma we collect some  basic results. The proof follows the same lines as those in the theory of tensor categories, and will be omitted. We only give an idea of the proof of one statement, since it will be needed later. 
\begin{lema}\label{dual-of-composition} Let $\Bc$ be a finite 2-category. The following statements hold.
\begin{itemize}
\item[(i)] For any 0-cells $A,B, C\in \Bc$, the functor
$$\circ: \Bc(A,B)\times \Bc(C,A)\to \Bc(C,B)$$
is exact in each variable.
\item[(ii)] For any pair  $X, Y$ of composable 1-cells, there are isomorphisms
$$  {}^*(X\circ Y)\simeq  {}^*Y\circ  {}^*X, \quad (X\circ Y)^*\simeq Y^*\circ X^*. $$

\item[(iii)] If $X\in \Bc(A,B)$ is an invertible 1-cell, with inverse $Y$, and isomorphisms $\alpha: X \circ Y \rightarrow I_B$ and $\beta: Y \circ X \rightarrow I_A$. Then $X$ is a left dual of $Y$, with evaluation and coevaluation given by 
$$ \coev_Y=\alpha^{-1}, \quad \ev_Y= \beta (\id_Y\circ \alpha\circ \id_X)(\id_{Y\circ X}\circ \beta^{-1}).$$
\end{itemize}

\end{lema}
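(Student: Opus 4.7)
The plan is to treat the three items separately, each following the standard template from rigid monoidal category theory.

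For item (i), I would show exactness by producing adjoints. For a 1-cell $X\in \Bc(A,B)$ and any 0-cell $C$, the functor $X\circ -:\Bc(C,A)\to \Bc(C,B)$ admits as right and left adjoints the functors $X^*\circ -$ and ${}^*X\circ -$, respectively, with unit and counit built directly from the evaluation and coevaluation 2-cells of $X$ and of ${}^*X$. An additive functor between abelian categories that has both adjoints is automatically exact, so $X\circ -$ is exact, and the argument for $-\circ X$ is symmetric.

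For item (ii), I would take $(X\circ Y)^*:=Y^*\circ X^*$ and define
\[
\ev_{X\circ Y}=\ev_Y\,(\id_{Y^*}\circ \ev_X\circ \id_Y),\qquad
\coev_{X\circ Y}=(\id_X\circ \coev_Y\circ \id_{X^*})\,\coev_X.
\]
Each zigzag identity for the pair $(X\circ Y,(X\circ Y)^*)$ reduces, via the interchange law between horizontal and vertical composition, to a diagram in which the zigzag identities for $X$ and for $Y$ appear nested inside one another; applying them successively collapses the composite to the identity. The left-dual statement is the mirror-image argument.

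For item (iii), which the author singles out, the strategy is a direct verification of the two zigzag identities for $X$ as a left dual of $Y$ with the data $\coev_Y=\alpha^{-1}$ and $\ev_Y=\beta\,(\id_Y\circ \alpha\circ \id_X)\,(\id_{Y\circ X}\circ \beta^{-1})$. That is, one must show
\[
(\ev_Y\circ \id_Y)(\id_Y\circ \coev_Y)=\id_Y, \qquad (\id_X\circ \ev_Y)(\coev_Y\circ \id_X)=\id_X.
\]
Expanding $\ev_Y$ and applying the interchange law together with Remark~\ref{interchange} to slide the 2-cells $\alpha^{\pm 1}$ and $\beta^{\pm 1}$ past identities, one eventually creates adjacent pairs $\beta\cdot\beta^{-1}$ and $\alpha\cdot\alpha^{-1}$ that cancel to the identity. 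The main obstacle is purely bookkeeping: one must track the sources and targets of every 2-cell and insert identity 2-cells at the right places so that the interchange rewrites are legitimate. The apparently overbuilt shape of $\ev_Y$ (rather than the naive $\ev_Y=\beta$) is exactly what makes \emph{both} zigzag identities hold simultaneously without presupposing any compatibility between $\alpha$ and $\beta$.
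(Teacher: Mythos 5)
Your proposal is correct, and for items (i) and (iii) it matches the paper: the paper omits (i) entirely (your both-adjoints argument is the standard one it alludes to, though note that with the paper's conventions, where $\ev_X\colon X^*\circ X\to I_A$ and $\coev_X\colon I_B\to X\circ X^*$, the right adjoint of $X\circ -$ is ${}^*X\circ -$ and the left adjoint is $X^*\circ -$, i.e.\ the opposite of what you wrote; this swap is immaterial for exactness), and for (iii) the paper carries out exactly the zigzag verification you describe, using Remark \ref{interchange} to slide $\alpha^{\pm1}$ and $\beta^{\pm1}$ past identity 2-cells until they cancel in adjacent pairs, proving one zigzag identity and leaving the other as ``similar.'' For (ii) your route is slightly different: you exhibit $Y^*\circ X^*$ as a right dual of $X\circ Y$ via the nested evaluation and coevaluation and check the zigzag identities, so that the isomorphism with $(X\circ Y)^*$ follows from uniqueness of duals; the paper instead assumes the dual ${}^*(X\circ Y)$ already given by rigidity and writes down the explicit comparison 2-cell $\phi\colon{}^*(X\circ Y)\to{}^*Y\circ{}^*X$ as a composite of $\ev_{X\circ Y}$ with $\coev_X$ and $\coev_Y$ (equation \eqref{iso-dualsl}). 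The two arguments are equivalent in substance, but the paper's choice is deliberate: the explicit formula for $\phi$ is what gets identified with $\mathfrak{b}^1_{X,M,N}$ in Lemma \ref{duals-in-modcat}, and a pure uniqueness-of-duals argument would not directly supply that formula for later use.
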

\pf (ii).
We will give the first isomorphism. The second one is constructed similarly. Let be $\ev_{X\circ Y}: (X\circ Y)\circ {}^*(X\circ Y)\to I$, $\coev_X: I\to  {}^*X\circ X$, $\coev_Y: I\to  {}^*Y\circ Y$ the corresponding evaluation and coevaluations. The map $\phi:  {}^*(X\circ Y)\to  {}^*Y\circ  {}^*X $ defined as
\begin{equation}\label{iso-dualsl}\phi=\big(\id_{  {}^*Y\circ  {}^*X }\circ\ev_{X\circ Y} \big) \big(\id_{{}^*Y}\circ\coev_X\circ\id_{Y\circ {}^*(X\circ Y) }\big) \big(\coev_Y\circ \id_{ {}^*(X\circ Y)}\big)
\end{equation} is an isomorphism.

(iii).  Let us prove the rigidity axiom
$(\ev_Y \circ \id_Y)(\id_Y \circ \coev_Y)= \id_Y$. Starting from the left hand side
\begin{align*}
(\ev_Y \circ \id_Y)(& \id_Y \circ  \coev_Y)  = \\
& = (\beta \circ \id_Y)(\id_Y \circ \alpha \circ \id_{X \circ Y})(\id_{Y \circ X} \circ \beta^{-1} \circ \id_Y)(\id_Y \circ \alpha^{-1})\\
& = (\beta \circ \id_Y)(\id_{Y \circ X \circ Y} \circ \alpha)(\beta^{-1} \circ \id_{Y \circ X \circ Y})(\id_Y \circ \alpha^{-1})\\
& = (\beta \circ \id_Y)(\beta^{-1} \circ \id_Y)(\id_Y \circ \alpha)(\id_Y \circ \alpha^{-1}) \\
 & = \id_Y.
\end{align*}
The first equality is by the definition of $\ev_Y$ and $\coev_Y$, and the second equality is the consequence of apply the remark \ref{interchange} for $\alpha$ and $\alpha^{-1}$ and for $\beta$ and $\beta^{-1}$. In a similar way, one can prove the other axiom $(\id_{{}^*Y} \circ \ev_Y)(\coev_Y \circ \id_{{}^*Y}) = \id_{{}^*Y}$.
\epf

The next result relates the duals of pseudonatural equivalences. This result will be needed later.

\begin{lema}\label{dual-of-pseudonat-equi}   Let $\Bc, \widetilde{\Bc}$ be finite 2-categories. Let  $\Fc:\Bc\to \wbc$, $\Gc:\Bc\to \wbc$ be a pair of 2-functors, and let $\chi:\Fc\to \Gc$, $\tau:\Gc\to \Fc$ be pseudonatural equivalences, one the inverse of the other. For any 1-cell $X\in \Bc(A,B)$, ${}^*(\tau_X)=\chi_{{{}^*X}}$.
\end{lema}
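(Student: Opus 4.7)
The plan is to interpret the equality ${}^*(\tau_X)=\chi_{{}^*X}$ via the mate construction along the duality $(\chi^0_A,\tau^0_A)$ provided by the inverse pseudonatural equivalences. Since $\chi,\tau$ are mutually inverse equivalences, there exist invertible modifications $\gamma:\tau\circ\chi\Rightarrow \id_{\Fc}$ and $\delta:\chi\circ\tau\Rightarrow\id_{\Gc}$; componentwise, using \eqref{composition-pseudonat}, these yield isomorphism 2-cells $\gamma_A:\tau^0_A\circ\chi^0_A\xrightarrow{\sim} I_{\Fc(A)}$ and $\delta_A:\chi^0_A\circ\tau^0_A\xrightarrow{\sim} I_{\Gc(A)}$ for every $A\in\Bc^0$. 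By Lemma \ref{dual-of-composition}(iii), these data realize $\chi^0_A$ as a left dual of $\tau^0_A$, with explicit $\ev_{\tau^0_A}$ and $\coev_{\tau^0_A}$. Moreover, since $\Fc$ and $\Gc$ are strict 2-functors, they preserve left duals on the nose: $\Fc({}^*X)={}^*\Fc(X)$ and $\Gc({}^*X)={}^*\Gc(X)$, with evaluation and coevaluation obtained by applying $\Fc,\Gc$ to those of $X$. Combining this with \eqref{iso-dualsl} yields canonical identifications
$$ {}^*(\tau^0_B\circ\Gc(X)) = \Gc({}^*X)\circ\chi^0_B, \qquad {}^*(\Fc(X)\circ\tau^0_A) = \chi^0_A\circ\Fc({}^*X), $$
so that the mate ${}^*(\tau_X)$ of the iso 2-cell $\tau_X$ has the same source and target as $\chi_{{}^*X}$, making the asserted equality meaningful.

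For the verification, I would show that $\chi_{{}^*X}$ satisfies the zig-zag identities that uniquely characterize the mate of $\tau_X$ under the duality above, and then conclude by the uniqueness of duals. The key inputs are: the pseudonaturality equation \eqref{pseudonat-def1} for $\chi$ applied to the composable pairs $(X,{}^*X)$ and $({}^*X,X)$, which glues $\chi_{X\circ{}^*X}$ and $\chi_{{}^*X\circ X}$ to $\chi_X$ and $\chi_{{}^*X}$; the naturality of $\chi$ in the 2-cells $\ev_X$ and $\coev_X$ of $\Bc$; and the modification axioms satisfied by $\gamma$ and $\delta$, which express how the isomorphisms $\gamma_A,\delta_A$ intertwine the pseudonatural structures. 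Remark \ref{interchange} is used repeatedly to absorb the interchange between horizontal and vertical composition with the identity 1-cells $I_{\Fc(A)}$ and $I_{\Gc(A)}$ that appear along the way.

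The main obstacle is coherence bookkeeping: pseudonaturality squares for $\chi$ on both $X$ and ${}^*X$, duality data for $X$ pushed through $\Fc$ and $\Gc$, and the modification data $\gamma,\delta$ all enter the picture, and must be threaded together without slip-ups. I expect the zig-zag strategy to be substantially cleaner than any direct chain of equalities between ${}^*(\tau_X)$ and $\chi_{{}^*X}$, because it splits the verification into two parallel computations, each of which reduces to a single pseudonaturality square combined with a modification axiom. No monoidal-category shortcuts are available in this genuinely 2-categorical setting, so this splitting seems essential for tractability.
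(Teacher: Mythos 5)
Your plan matches the paper's proof in all essentials: both realize $\chi^0_A$ as ${}^*(\tau^0_A)$ via Lemma \ref{dual-of-composition}(iii) applied to the invertible modifications, and both identify ${}^*(\tau_X)$ with $\chi_{{}^*X}$ by combining pseudonaturality of $\chi$ (resp.\ $\tau$) at $\ev_X$, $\coev_X$, the composition axiom \eqref{pseudonat-def1}, the modification axiom, Remark \ref{interchange}, and the rigidity identities. The only (cosmetic) difference is that the paper expands the explicit formula for the mate ${}^*(\tau_X)$ and simplifies it directly, whereas you propose to verify the characterizing adjunction property of the mate and invoke its uniqueness --- the underlying computation is the same.
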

\pf Since $\chi\circ \tau \sim \id$, there is an invertible modification $\omega: \chi\circ \tau \to \id$. Thus, we have isomorphisms $\omega_A: \chi^0_A \circ \tau^0_A \to I_A$, for any 0-cell $A$.
Using Lemma \ref{dual-of-composition} (iii) we have that $\chi^0_A$ is a left dual of $(\tau^0_A)$, with coevaluation and evaluation given by
$$\coev_{\tau^0_A}= \omega^{-1}_A, \,\,
\ev_{\tau^0_A}=\beta (\id_{\tau^0_A}\circ  \omega_A\circ \id_{\chi^0_A})(\id_{\tau^0_A\circ \chi^0_A}\circ \beta^{-1}),$$
With $\beta: \tau^0_A\circ \chi^0_A\to I$ some isomorphism, that we know  it exists since $\tau\circ \chi  \sim \id.$
For any 1-cell $X\in \Bc(A,B)$
$$\tau_X: \Fc(X)\circ \tau^0_A\to \tau^0_B \circ \Gc(X). $$
The naturality of $\tau$ implies that for any 1-cell $Y\in\Bc(A,B)$
\begin{equation}\label{natu-tau01}
\tau_{{}^*Y\circ Y} (\Fc(\coev_Y)\circ \id_{\tau^0_A})=\id_{\tau^0_B}\circ \coev_{\Gc(Y)}
\end{equation}
Using \eqref{pseudonat-def1} for $\tau$, equation \eqref{natu-tau01} implies that
\begin{equation}\label{natu-tau02}(\tau_{{}^*Y}\circ \id_{\Gc(Y)}) (\id_{{}^*\Fc(Y)}\circ \tau_Y) (\Fc(\coev_Y)\circ \id_{\tau^0_A})=\id_{\tau^0_B}\circ \coev_{\Gc(Y)}.
\end{equation}

Whence
\begin{equation}\label{comb0} (\id_{{}^*\Fc(Y)}\circ \tau_Y) (\Fc(\coev_Y)\circ \id_{\tau^0_A})=(\tau^{-1}_{{}^*Y}\circ \id_{\Gc(Y)})(\id_{\tau^0_B}\circ \coev_{\Gc(Y)}). \end{equation} 
Using that $\chi$ is the inverse of $\tau$ we get that
$$(\omega_B\circ \id_{\Gc(Y)})(\chi\circ \tau)_Y=(  \id_{\Gc(Y)}\circ \omega_A).$$
From this equation, and the definition of the composition of pseudonatural transformations \eqref{composition-pseudonat}, we obtain that
\begin{equation}\label{comb1} (\chi_X\circ \id_{\tau^0_A})(\id_{\Gc(X)}\circ \omega^{-1}_A)(\omega_B\circ\id_{\Gc(X)}) = \id_{\chi^0_B}\circ \tau^{-1}_X.
\end{equation} 
Combining  \eqref{comb0} and \eqref{comb1}, and using that ${}^*(\tau^0_A)=\chi^0_A$,
we obtain that
\begin{equation}\label{t0p-in} \begin{split} &(\id_{{}^* \tau^0_A\circ {}^*\Fc(Y)}\circ \tau_Y) (\id_{{}^* \tau^0_A}\circ \Fc(\coev_Y)\circ \id_{\tau^0_A})=\\
&=(\id_{{}^* \tau^0_A}\circ\tau^{-1}_{{}^*Y}\circ \id_{\Gc(Y)})(\id_{{}^* \tau^0_A\circ\tau^0_B}\circ \coev_{\Gc(Y)})\\
&=(\chi_{{}^*Y}\circ \id_{\tau^0_B\circ\Gc(Y)}) (\id_{\Gc({}^*Y)}\circ \omega^{-1}_B\circ \id_{\Gc(Y)})(\omega_A\circ\id)(\id_{{}^* \tau^0_A\circ\tau^0_B}\circ \coev_{\Gc(Y)}).
\end{split}
\end{equation} 
 For any 1-cell $Y\in\Bc(A,B)$ we have that ${}^*(\tau_Y)$ is equal to
 \begin{align*} &=(\id_{} \circ \ev_{\tau^0_B\circ \Gc(Y)} ) (\id_{{}^*\tau^0_A\circ {}^*\Fc(Y)}  \circ \tau_Y\circ \id_{ {}^*\Gc(Y)\circ {}^*\tau^0_B} )(\coev_{ \Fc(Y)\circ \tau^0_A} \circ \id_{} )\\
 &=  (\id_{{}^*\tau^0_A\circ {}^*\Fc(Y)} \circ \ev_{\tau^0_B} )(\id_{{}^*\tau^0_A\circ {}^*\Fc(Y)\circ\tau^0_B}\circ \ev_{\Gc(Y)}\circ \id_{{}^*\tau^0_B})\\ &(\id_{{}^*\tau^0_A\circ {}^*\Fc(Y)}  \circ \tau_Y\circ \id)
 ( (\id_{{}^*\tau^0_A}\circ \coev_{ \Fc(Y)}\circ \id_{\tau^0_A})) \coev_{\tau^0_A}\circ \id_{ {}^*\Gc(Y)\circ {}^*\tau^0_B})
\\
&=(\id_{{}^*\tau^0_A\circ {}^*\Fc(Y)} \circ \ev_{\tau^0_B} )(\id_{{}^*\tau^0_A\circ {}^*\Fc(Y)\circ\tau^0_B}\circ \ev_{\Gc(Y)}\circ \id_{{}^*\tau^0_B})\\
&(\chi_{{}^*Y}\circ \id_{\tau^0_B\circ\Gc(Y)\circ {}^*\Gc(Y)\circ {}^*\tau^0_B}) (\id_{\Gc({}^*Y)}\circ \omega^{-1}_B\circ \id_{\Gc(Y)\circ {}^*\Gc(Y)\circ {}^*\tau^0_B})(\omega_A\circ\id)\\
&(\id_{{}^* \tau^0_A\circ\tau^0_B}\circ \coev_{\Gc(Y)})(\coev_{\tau^0_A}\circ\id_{ {}^*\Gc(Y)\circ {}^*\tau^0_B})= \chi_{{}^*Y}
 \end{align*}
 The first equality is the definition of ${}^*(\tau_Y)$, the second equality  follows from the formula for $\ev_{X\circ Y}$ and $\coev_{X\circ Y}$. The third equality follows from \eqref{t0p-in}, and the last equality follows from the rigidity axioms.
\epf

\subsection{The 2-category of $\ca$-module categories}\label{example-modu-over-tensor}

 Let $\ca$ be a finite tensor category. We associate to $\ca$ diverse 2-categories that will be used throughout. Let $\camod$ be the \textit{2-category of representations of }$\ca$, that is defined as follows.  Its 0-cells are finite left $\ca$-module categories, 
if $\Mo, \No$ are  left $\ca$-module categories, then the category $\camod(\Mo, \No)=\Fun_\ca(\Mo, \No).$
The 2-category $\camod_e$ of indecomposable exact left $\ca$-module categories
is defined in a similar way as $\camod$, with 0-cells being the indecomposable left exact  
$\ca$-module categories.

\begin{lema}\label{C-mod-is-finite}  Let $\ca$ be a finite tensor category. The 2-category $\camod_e$ is a finite 2-category. 
\end{lema}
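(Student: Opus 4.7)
The plan is to verify the three defining conditions of a finite 2-category for $\camod_e$: rigidity, finiteness of each hom category $\Fun_\ca(\Mo,\No)$, and simplicity of each identity 1-cell $I_\Mo=\Id_\Mo$. For rigidity, consider a 1-cell $F\in\Fun_\ca(\Mo,\No)$. Since $\Mo$ is exact, $F$ is exact, and since $\Mo,\No$ are finite categories $F$ admits both a right adjoint $F^{\ra}$ and a left adjoint $F^{\la}$; by \cite[Lemma 2.11]{DSS}, cited in Section \ref{Section:tensorcat}, these adjoints are again $\ca$-module functors, hence 1-cells in $\camod_e$. I would then declare the right dual to be $F^*:=F^{\la}$ and the left dual to be ${}^*F:=F^{\ra}$, with the evaluations and coevaluations given by the counits and units of the respective adjunctions. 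Following the composition conventions of the excerpt, $\ev_F:F^{\la}\circ F\to \Id_\Mo$ and $\coev_F:\Id_\No\to F\circ F^{\la}$ are precisely the counit and unit of $F^{\la}\dashv F$, and the two zig-zag axioms for right duality are exactly the triangle identities of this adjunction; the argument for ${}^*F$ is symmetric.

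For the finiteness of $\Fun_\ca(\Mo,\No)$ with $\Mo,\No$ exact indecomposable, I would invoke Ostrik's theorem to realize $\Mo\simeq\Mod_A(\ca)$ and $\No\simeq\Mod_B(\ca)$ for some algebras $A,B$ internal to $\ca$, and then identify $\Fun_\ca(\Mo,\No)$ with the category of $(A,B)$-bimodules internal to $\ca$. Since $\ca$ is finite, so is this bimodule category (it is equivalent to ${}_R\Mod$ for an explicit finite-dimensional $\ku$-algebra $R$ built from $A$, $B$, and a projective generator of $\ca$). Alternatively one can cite Proposition \ref{funct-mod} together with the standard finiteness of the relative Deligne product $\Mo^{\op}\btc\No$ for exact module categories.

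For the simplicity of $I_\Mo=\Id_\Mo$, the excerpt recalls that for exact indecomposable $\Mo$ the category $\ca^*_\Mo=\End_\ca(\Mo)=\Fun_\ca(\Mo,\Mo)$ is a finite tensor category whose tensor unit is $\Id_\Mo$; since the unit of any tensor category is simple by definition, $I_\Mo$ is simple. The main obstacle is the finiteness of $\Fun_\ca(\Mo,\No)$, since this requires invoking results not developed in detail in the excerpt, namely Ostrik's realization of exact indecomposable module categories plus the identification of $\ca$-module functors with internal bimodules (or equivalently the finiteness of the relative Deligne product over $\ca$). The other two conditions follow directly from facts explicitly recalled in Sections \ref{Section:tensorcat} and \ref{Section:2-cat}.
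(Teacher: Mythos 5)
Your proof is correct and follows essentially the same route as the paper: the key point in both is that a module functor $F$ between exact module categories is exact, hence has left and right adjoints which are again module functors by \cite[Lemma 2.11]{DSS}, and these serve as the right and left duals (with the same assignment of right dual $=$ left adjoint) via the triangle identities. You additionally justify the finiteness of $\Fun_\ca(\Mo,\No)$ and the simplicity of $\Id_\Mo$, which the paper asserts without argument; your justifications are sound.
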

\pf For any indecomposable $\Mo$, the indentity functor $\Id_\Mo$ is a simple object. We  need to prove the existence of left and right duals. Let $\Mo, \No$ be exact $\ca$-module categories and $F:\Mo\to \No$ be a $\ca$-module functor. Then, $F$ is exact \cite[Prop. 3.11]{EO}. Then $F^*:\No\to \Mo$ is the left adjoint of $F$, and the left dual
$ {}^*F$ is the right adjoint of $F$. Both functors $F^*, {}^*F$ are $\ca$-module functors \cite[Lemma 2.11]{DSS}. The evaluation and coevaluation are given by the unit and counit of the adjunction. 
\epf

Let $\Mo\in\camod_e $, and $M\in \Mo$. According to Section \ref{subsection:internal hom} the right adjoint of the functor $R_M:\ca\to \Mo$, $R_M(X)=X\otb M$ is $ {}^*R_M:\Mo\to \ca$ given by ${}^*R_M(N)=\uhom(M,N).$ In particular, for any $X\in \ca$, the right adjoint to $R_X$ is $R_{X^*}$.

The next Lemma, although technical, will be crucial later when we relate two different notions of adjoint algebras.

\begin{lema}\label{duals-in-modcat} Let $\Mo$ be an exact indecomposable left $\ca$-module category. Let $M\in \Mo$, $X\in \ca$, and let $$\delta:{}^* (R_M\circ R_X)\to  R_{X^*}\circ {}^*R_M$$ be the natural isomorphim defined in Lemma \ref{dual-of-composition}. Then, for any $N\in \Mo$
$$ \delta_N=\mathfrak{b}^1_{X,M,N}.$$
Recall from Section \ref{subsection:internal hom} the definition of $\mathfrak{b}^1_{X,M,N}$.
\end{lema}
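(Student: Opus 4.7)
The plan is to unpack both sides as explicit morphisms $\uhom(X \otb M, N) \to \uhom(M, N) \ot X^*$ and verify that they coincide. Both are natural isomorphisms built from adjunctions: $\delta_N$ comes from the 2-categorical formula \eqref{iso-dualsl} applied inside $\camod_e$, while $\mathfrak{b}^1_{X,M,N}$ is defined in Subsection \ref{subsection:internal hom} as the composition of a chain of hom-set isomorphisms. The proof amounts to showing that these two constructions produce the same underlying morphism.

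The first step is to compute $\delta_N$ explicitly. In \eqref{iso-dualsl} we set $X = R_M$ and $Y = R_X$; the left duals are ${}^*R_M = \uhom(M,-)$ and ${}^*R_X = R_{X^*}$, and the evaluation and coevaluation 2-cells in $\camod_e$ are the units and counits of the respective adjunctions, namely $\coev_{R_M} = \coev^\Mo_{-,M}$, $\ev_{R_M} = \ev^\Mo_{M,-}$, together with $\coev_{R_X}(Z) = \id_Z \ot \coev_X$ and $\ev_{R_X}(Z) = \id_Z \ot \ev_X$. For the composite $R_M \circ R_X$, which is canonically isomorphic to $R_{X \otb M}$ via the module associator, the counit at $N$ reads $\ev^\Mo_{X \otb M, N} \circ m_{A,X,M}$, where $A := \uhom(X \otb M, N)$. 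Unfolding the three horizontal compositions in \eqref{iso-dualsl} evaluated at $N$ should yield
\begin{equation*}
\delta_N = \bigl(\bigl(\uhom(M,\, \ev^\Mo_{X \otb M, N} \circ m_{A,X,M}) \circ \coev^\Mo_{A \ot X, M}\bigr) \ot \id_{X^*}\bigr) \circ (\id_A \ot \coev_X).
\end{equation*}
For $\mathfrak{b}^1_{X,M,N}$ I would invoke Yoneda: the morphism is determined by its action on $\id_A$. Tracing $\id_A$ through the four isomorphisms of its definition (namely $\phi^A_{X \otb M, N}$, precomposition with $m_{A,X,M}$, $\psi^{A \ot X}_{M,N}$, and the $(-\ot X) \dashv (-\ot X^*)$ adjunction) and using the standard identity $\psi^Z_{M,N}(f) = \uhom(M,f) \circ \coev^\Mo_{Z,M}$ produces precisely the same expression, establishing $\delta_N = \mathfrak{b}^1_{X,M,N}$.

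The main obstacle is the careful bookkeeping of horizontal compositions and, crucially, the module associator $m_{A,X,M}$: since $R_M \circ R_X$ and $R_{X \otb M}$ are only isomorphic and not equal, this associator must be inserted whenever one identifies the counit of the composite adjunction with $\ev^\Mo_{X \otb M,-}$, and omitting it yields an incorrect expression for $\delta_N$. Once this is accounted for, the equality reduces to a direct comparison of two explicit formulas.
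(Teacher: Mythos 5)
Your proposal is correct and follows essentially the same route as the paper's proof: unwind \eqref{iso-dualsl} into the explicit composite $\bigl(\uhom(M,\ev^\Mo_{X\otb M,N})\,\coev^\Mo_{A\ot X,M}\ot\id_{X^*}\bigr)(\id_A\ot\coev_X)$ with $A=\uhom(X\otb M,N)$, unwind $\mathfrak{b}^1_{X,M,N}$ by chasing $\id_A$ through its defining chain of adjunction isomorphisms, and match the two via the naturality identity $\psi^{Z}_{M,N}(f)=\uhom(M,f)\circ\coev^\Mo_{Z,M}$, which is exactly the content of the paper's equation \eqref{finish} evaluated at the identity. The only cosmetic differences are that the paper assumes $\Mo$ is strict (so the module associator you carefully track disappears) and verifies the equality after composing with an arbitrary $\alpha\in\Hom_\ca(Z,\uhom(X\otb M,N))$ rather than just at $\id_A$.
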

\pf We will assume that $\Mo$ is strict. We will make use of the notation of Section \ref{subsection:internal hom}, included the isomorphisms $\psi, \phi$. Observe that $R_M\circ R_X = R_{X\otb M}. $ From the definition of $\phi$ given in \eqref{iso-dualsl}, we have that $\delta$ is equal to
$$ (\id_{R_{X^*} \circ {}^*R_M}  \circ \ev_{R_{X\otb M}} )( \id_{R_{X^*}} \circ \coev_{R_M}\circ   \id_{R_X\circ {}^*R_{X\otb M} })( \coev_{R_X}\circ  \id_{{}^*R_{X\otb M}}).
$$
If $N\in \Mo$, then, using that
$$\big( \coev_{R_X}\big)_Y=\id_Y\ot \coev_X, \,\,\big(  \coev_{R_M}\big)_X=\coev^{\Mo}_{X,M},$$
$$ \big( \ev_{R_M}\big)_N=\ev^{\Mo}_{M,N},$$
 we obtain 
\begin{align*} (\id_{R_{X^*} \circ {}^*R_M}  \circ \ev_{R_{X\otb M}} )_N&= \uhom(M, \ev^{\Mo}_{X\otb M,N})\ot \id_{X^*},\\
 ( \id_{R_{X^*}} \circ \coev_{R_M}\circ   \id_{R_X\circ {}^*R_{X\otb M} })_N&=\coev^{\Mo}_{\uhom(X\otb M, N)\ot X, M}\ot \id_{X^*},\\
 ( \coev_{R_X}\circ  \id_{{}^*R_{X\otb M}})_N&=\id_{\uhom(X\otb M, N)}\ot  \coev_X.
\end{align*}
Whence, for any  $Z\in \ca$, and any $\alpha\in \Hom_\ca(Z, \uhom(X\otb M, N))$ we have that 
\begin{align*} \delta_N\alpha= \big( \uhom(M, \ev^{\Mo}_{X\otb M,N})\ot \id_{X^*}\big)& \big(\coev^{\Mo}_{\uhom(X\otb M, N)\ot X, M}\ot \id_{X^*}\big)\\
& \big(\alpha\ot \id_{X\ot X^*} \big)(\id_Z\ot \coev_X).
\end{align*}
On the other hand, it follows from the definition of $ \mathfrak{b}^1_{X,M,N}:\uhom(X\otb M, N)\to \uhom(M,N)\ot X^* $,  that
\begin{equation}  \mathfrak{b}^1_{X,M,N} \alpha = \big( \psi^{Z\ot X}_{M,N} (\phi^Z_{X\otb M, N}(\alpha)) \ot \id_{X^*}\big) \big(\id_Z\ot \coev_X\big),
\end{equation}
for any $Z\in \ca$, and any $\alpha\in \Hom_\ca(Z, \uhom(X\otb M, N))$. Thus $$\delta_N= \mathfrak{b}^1_{X,M,N}$$ if and only if $$\delta_N\alpha=\mathfrak{b}^1_{X,M,N} \alpha  $$ 
if and only if
\begin{align}\label{finish}\begin{split}  \uhom(M, \ev^{\Mo}_{X\otb M,N}) \coev^{\Mo}_{\uhom(X\otb M, N)\ot X, M}&
 \big(\alpha\ot \id_{X} \big)=\\
 &= \psi^{Z\ot X}_{M,N} (\phi^Z_{X\otb M, N}(\alpha))
 \end{split}
\end{align}
for any $\alpha\in \Hom_\ca(Z, \uhom(X\otb M, N))$. Using the   naturality of $\psi$, see \eqref{nat-psi-oneside}, we get that
\begin{align}\label{tech-ph-ps}\begin{split}\coev^{\Mo}_{\uhom(X\otb M, N)\ot X, M}
 (\alpha\ot \id_{X} )&=\psi^{\uhom(X\otb M, N)\ot X}_{M, \uhom(X\otb M, N)\ot X \otb M}(\id)  (\alpha\ot \id_{X} )\\
 &= \psi^{Z\ot X}_{M,\uhom(X\otb M, N)\ot X\otb M }(\alpha\ot \id_{X\otb M}).\end{split}
\end{align}
The first equality follows from the definition of $ \coev^{\Mo}_{X,M}$. Using the naturality of $\phi$, see \eqref{nat-phi-oneside}, we get that
\begin{align*} & \phi^{Z\ot X}_{M,N} \big(\uhom(M, \ev^{\Mo}_{X\otb M,N})\coev^{\Mo}_{\uhom(X\otb M, N)\ot X, M}
 (\alpha\ot \id_{X} )\big) =\\
 &= \ev^{\Mo}_{X\otb M,N}  \phi^{Z\ot X}_{M,\uhom(X\otb M, N)\ot X\otb M} (\coev^{\Mo}_{\uhom(X\otb M, N)\ot X, M}
 (\alpha\ot \id_{X} ))\\
 &=\ev^{\Mo}_{X\otb M,N}  (\alpha\ot \id_{X\otb M})\\
 &= \phi^{\uhom(X\otb M, N)}_{X\otb M, N} (\id)(\alpha\ot \id_{X\otb M})\\
 &=\phi^Z_{X\otb M, N}(\alpha).
\end{align*}
The second equality follows from \eqref{tech-ph-ps}, the third equality follows from the definition of $\ev^{\Mo}$, and the last one follows from \eqref{nat-phi-oneside2}. This equality implies \eqref{finish}, and the proof is finished.
\epf

Assume that $\D$ is another finite tensor category, and let $\No$ be an invertible $(\ca,\D)$-bimodule category. Define  $\theta^{\No}:\camod_e \to {}_\Do\text{Mod}_e$,  the pseudofunctor described as follows. Let $\Mo, \Mo'$ be  $\ca$-module categories. Then $\theta^{\No}(\Mo)=\Fun_\ca(\No,\Mo)$, and if $F\in \Fun_\ca(\Mo,\Mo')$, then 
$$\theta^{\No}(F):\Fun_\ca(\No,\Mo)\to \Fun_\ca(\No,\Mo'),$$
$$  \theta^{\No}(F)(H)=F\circ H,$$
for any $H\in \text{Fun}_\ca(\No,\Mo)$.

The next result seems to be part of the folklore of the area.  Since we were unable to find a reference in the literature we include a brief proof.
\begin{prop}\label{higher-Morita} The 2-functor $  \theta^{\No}:\camod_e \to {}_\Do Mod_e$ is a biequivalence of 2-categories.
\end{prop}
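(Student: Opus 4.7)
The plan is to recognize that, by Proposition \ref{funct-mod}, the functor $\theta^{\No}(\Mo)=\Fun_{\ca}(\No,\Mo)$ is equivalent, as a $\D$-module category, to $\No^{\op}\boxtimes_{\ca}\Mo$. Thus, up to natural equivalence, $\theta^{\No}$ is the pseudofunctor given by Deligne tensoring with the invertible $(\D,\ca)$-bimodule category $\No^{\op}$ from the left. The obvious candidate for a quasi-inverse is $\theta^{\No'}:{}_{\D}\text{Mod}_e\to \camod_e$ where $\No'$ is the inverse bimodule category of $\No$, so that $\No'\boxtimes_{\D}\No\simeq \ca$ and $\No\boxtimes_{\ca}\No'\simeq \D$ as bimodule categories.

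To verify biequivalence, I would check the two standard criteria. \textbf{Essential surjectivity on 0-cells:} given a $\D$-module category $\Pc$, set $\Mo:=\No\boxtimes_{\D}\Pc$, viewed as a left $\ca$-module category via the $\ca$-action on $\No$. Then, using Proposition \ref{funct-mod} and the associativity of the Deligne tensor product over bimodule categories, one has
\begin{equation*}
\theta^{\No}(\Mo)\simeq \No^{\op}\boxtimes_{\ca}(\No\boxtimes_{\D}\Pc)\simeq (\No^{\op}\boxtimes_{\ca}\No)\boxtimes_{\D}\Pc\simeq \D\boxtimes_{\D}\Pc\simeq \Pc,
\end{equation*}
as $\D$-module categories, using invertibility of $\No$.

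\textbf{Equivalence on hom-categories:} for $\Mo,\Mo'\in\camod_e$, Proposition \ref{funct-mod} identifies
\begin{equation*}
\Fun_{\D}(\theta^{\No}(\Mo),\theta^{\No}(\Mo'))\simeq \theta^{\No}(\Mo)^{\op}\boxtimes_{\D}\theta^{\No}(\Mo')\simeq (\Mo^{\op}\boxtimes_{\ca}\No)\boxtimes_{\D}(\No^{\op}\boxtimes_{\ca}\Mo').
\end{equation*}
By associativity of Deligne products and invertibility of $\No$ (which gives $\No\boxtimes_{\D}\No^{\op}\simeq \ca$ as $(\ca,\ca)$-bimodule categories), this collapses to $\Mo^{\op}\boxtimes_{\ca}\ca\boxtimes_{\ca}\Mo'\simeq \Mo^{\op}\boxtimes_{\ca}\Mo'\simeq \Fun_{\ca}(\Mo,\Mo')$, with the composite being exactly the action of $\theta^{\No}_{\Mo,\Mo'}$.

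The bulk of routine bookkeeping is packaged inside Proposition \ref{funct-mod} and the associativity/unit coherences for the Deligne tensor product of bimodule categories. \textbf{The main obstacle} I expect is \emph{not} the bijection on objects and morphisms, which falls out of invertibility, but rather checking the pseudofunctor coherence: namely, that the chain of equivalences above is genuinely natural in both $\Mo$ and $\Mo'$ and is compatible with horizontal composition in $\camod_e$ and in ${}_{\D}\text{Mod}_e$, so that $\theta^{\No'}\circ\theta^{\No}$ agrees up to pseudonatural equivalence with $\Id_{\camod_e}$ (and symmetrically). This is essentially a diagrammatic/coherence check that the associators of the Deligne tensor product interact correctly with the module structure on $\Fun_{\ca}(\No,-)$, which is why the statement is generally regarded as folklore; a brief proof then consists in pointing to the two criteria above and invoking the biequivalence criterion for 2-functors recalled at the end of Section \ref{Section:2-cat}.
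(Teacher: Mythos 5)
Your argument is correct in outline but takes a genuinely different route from the paper. You verify the two standard criteria for a biequivalence (the hom-functors $\theta^{\No}_{\Mo,\Mo'}$ are equivalences, and $\theta^{\No}$ is essentially surjective on 0-cells, as recalled in Section \ref{Section:2-cat}), reducing everything to Proposition \ref{funct-mod}, associativity of the Deligne product, and invertibility of $\No$ together with the standard identification of the inverse bimodule with $\No^{\op}$. The paper instead exhibits an explicit quasi-inverse 2-functor $\theta^{\No^{\op}}$, with $\theta^{\No^{\op}}(\Mo)=\Fun_\Do(\No^{\op},\Mo)$, and constructs a concrete pseudonatural equivalence $\theta^{\No}\circ\theta^{\No^{\op}}\to\Id$ whose component at $\Mo$ sends $H$ to the relative end $\oint_{N\in\No}H(N)(N)$, importing from \cite{BM} that this is an equivalence and is invariant under module functors. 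Your approach is more economical and stays inside the Morita-theoretic formalism of Section \ref{Section:tensorcat}; the paper's approach buys explicit formulas for the quasi-inverse and for the equivalence data, which is closer to what is actually exploited later (Proposition \ref{center-2cat-monoid} and Theorem \ref{adj-to2-equiva} use the 1-cells $\chi^0,\tau^0$ concretely). The one place where your sketch carries a genuine burden is the assertion that the composite equivalence $\Fun_\Do(\theta^{\No}(\Mo),\theta^{\No}(\Mo'))\simeq\Fun_\ca(\Mo,\Mo')$ produced by the Deligne-product manipulations is a quasi-inverse of $F\mapsto F\circ(-)$ itself: an abstract equivalence between the two hom-categories does not by itself make $\theta^{\No}_{\Mo,\Mo'}$ an equivalence, so this compatibility must be checked (you flag it, correctly, as the remaining coherence work, but it is the mathematical heart of the local-equivalence step rather than mere bookkeeping). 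You should also record that $\No\boxtimes_\Do\Pc$ is again exact and indecomposable, so that your candidate preimage is a legitimate 0-cell of $\camod_e$.
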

\pf Using \cite[Prop. 4.2]{ENO} we can assume that $\Do=\End_\ca(\No)^{rev}.$ The opposite category $\No^{\op}$ is a $(\Do,\ca)$-bimodule category. Let us denote by $\theta^{\No^{\op}}:{}_\Do Mod_e\to \camod_e$ the 2-functor given by $\theta^{\No^{\op}}(\Mo)=\Fun_\Do(\No^{\op},\Mo)$, for any $\Do$-module category $\Mo$. Let us prove that $\theta^{\No}\circ \theta^{\No^{\op}}\simeq \Id.$ Define the pseudonatural transformation
$$(\chi, \chi^0): \theta^{\No}\circ \theta^{\No^{\op}}\to\Id$$
as follows. For any $\Do$-module category $\Mo$ the functor $$\chi^0_\Mo:\Fun_{\ca}(\No, \Fun_{\Do}(\No^{\op}, \Mo)) \to \Mo,$$
$$\chi^0_\Mo(H)= \oint_{N\in \No} H(N)(N).$$
Here $\oint$ stands for the "\emph{relative}" end; a categorical tool developed in \cite{BM}. See \cite[Section 4.4]{BM} for details on this functor. 

If $\Mo, \Mo'$ are $\Do$-module categories, and $F: \Mo\to \Mo'$ is a $\Do$-module functor, we must define a module natural  transformations
$$\chi_F: F\circ \chi^0_\Mo\to \chi^0_{\Mo'}\circ   \theta^{\No} \theta^{\No^{\op}}(F).$$
If $H\in \Fun_{\ca}(\No, \Fun_{\Do}(\No^{\op}, \Mo))$,  the functor 
$$ \chi^0_{\Mo'}\circ   \theta^{\No} \theta^{\No^{\op}}(F):\Fun_{\ca}(\No, \Fun_{\Do}(\No^{\op}, \Mo))\to \Mo'  $$
evaluated in $H$ is
\begin{align*} \chi^0_{\Mo'}\circ   \theta^{\No} \theta^{\No^{\op}}(F)(H)&= \chi^0_{\Mo'}( \theta^{\No^{\op}}(F)\circ H)\\
&=\oint_{N\in \No} (\theta^{\No^{\op}}(F)\circ H)(N)(N)\\
&= \oint_{N\in \No} (F \circ H(N))(N)\\
&= F\big(  \oint_{N\in \No} H(N)(N) \big)=
F\circ \chi^0_\Mo(H).
\end{align*}
The fourth equality follows since the relative end is invariant under module functors, see \cite[Prop. 3.3 (iii)]{BM}. Hence, we can define $\chi_F$ to be the identity natural transformation. Since $\chi^0_\Mo$ is an equivalence of categories, see \cite[Thm. 4.12 ]{BM}, one can show that the pseudonatural transformation $(\chi,\chi^0)$ is an equivalence. Whence, $\theta^{\No}\circ \theta^{\No^{\op}}\simeq \Id.$ The proof that  $\theta^{\No^{\op}}\circ \theta^{\No}\simeq \Id$ follows in a completely similar way.
\epf

\subsection{The center of a 2-category} 

If $\Bc$ is a 2-category, the center of $\Bc$ is the strict monoidal category $Z(\Bc)=\operatorname{PseuNat}(\Id_\Bc,\Id_\Bc)$, consisting of pseudonatural transformations of the identity pseudofunctor $\Id_\Bc$, see \cite{MS}.  Explicitly, objects in    $Z(\Bc)$ are pairs $(V,\sigma)$, where
 $$V=\{V_A\in \Bc(A,A) \text{ 1-cells},  A\in \Bc \}, $$   
 $$\sigma=\{\sigma_X: V_B\circ X\to X\circ V_A\},$$
 where, for any $X\in \Bc(A,B)$, $\sigma_X$ is a natural isomorphism 2-cell such that
\begin{equation}\label{braid-cent1} \sigma_{I_A}=\id_{V_A},
\sigma_{X\circ Y}= (\id_{X}\circ \sigma_Y)(\sigma_X\circ \id_{Y}),
\end{equation}
 for any 0-cells $A,B, C\in \Bc$, and any pair of 1-cells
 $X\in \Bc(A,B)$, $Y\in \Bc(C,B)$.

If $(V,\sigma)$, $(W,\tau)$ are two objects in $Z(\Bc)$, a morphism $f:(V,\sigma)\to (W,\tau)$ in $Z(\Bc)$ is a collection of 2-cells $f_A:V_A\Rightarrow W_A$, $A\in\Bc$ such that
\begin{equation}\label{monoidal-cent0} 
(\id_{X}\circ f_A)\sigma_X=\tau_X (f_B\circ \id_{X}),
\end{equation}
for any 1-cell $X\in\Bc(A,B)$. The category $Z(\Bc)$ has a monoidal product defined as follows. Let $(V,\sigma), (W,\tau)\in Z(\Bc)$ be two objects, then $(V,\sigma)\ot (W,\tau)=(V\ot W, \sigma\ot \tau)$, where 
\begin{equation}\label{monoidal-cent1} (V\ot W)_A=V_A\circ W_A,\quad
(\sigma\ot \tau)_X = (\sigma_X\circ \id_{W_A})(\id_{V_B}\circ \tau_X),
\end{equation}
for any 0-cells 
$A, B\in \Bc$, and $X\in \Bc(A,B)$. The monoidal product at the level of morphisms is the following.
If $(V,\sigma), (V',\sigma'), (W,\tau), (W',\tau')\in Z(\Bc)$ are objects, and $f:(V,\sigma)\to (V',\sigma'),$ $f':(W,\tau)\to (W',\tau')$ are morphisms in $Z(\Bc)$, then $f\ot f':(V,\sigma)\ot (V',\sigma') \to (W,\tau)\ot (W',\tau')$ is defined by
$$(f\ot f')_A=f_A\circ f'_A, $$
for any 0-cell $A$. The unit $(\uno,\iota)\in Z(\Bc)$ is the object
$$ \uno_A=I_A, \quad \iota_X=\id_X,$$
for any 0-cells $A, B$ and any 1-cell $X\in \Bc(A,B).$

\begin{rmk} If $\ca$ is a finite tensor category, the center of the 2-category $\underline{\ca}$, described in Example \ref{exa2-cat-monoidal}, coincides with the Drinfeld center $Z(\ca)$ of $\ca$.
\end{rmk}
\begin{prop}\label{center-2cat-monoid} Let $\Bc, \widetilde{\Bc}$ be finite 2-categories. Any biequivalence $\Fc:\Bc\to \wbc$ induces  a monoidal equivalence $\widehat{\Fc}:Z(\Bc)\to Z(\wbc).$

\end{prop}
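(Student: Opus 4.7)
The plan is to define $\widehat{\Fc}$ explicitly using a chosen quasi-inverse and then verify it is a monoidal equivalence. Let $\Gc:\wbc\to\Bc$ be a quasi-inverse to $\Fc$, with pseudonatural equivalences $\eta: \Fc\circ\Gc\to\Id_\wbc$ and $\zeta: \Gc\circ\Fc\to\Id_\Bc$, and let $\chi: \Id_\wbc \to \Fc\Gc$ be an inverse pseudonatural equivalence of $\eta$, so that each $\chi^0_B$ is a quasi-inverse to $\eta^0_B$.

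For $(V, \sigma)\in Z(\Bc)$, define $\widehat{\Fc}(V,\sigma) = (\widehat V,\widehat\sigma)$ by
\[ \widehat V_B := \eta^0_B \circ \Fc(V_{\Gc(B)})\circ \chi^0_B, \qquad B\in\wbc^0, \]
and for each 1-cell $X \in \wbc(A,B)$ define $\widehat\sigma_X: \widehat V_B\circ X\to X\circ \widehat V_A$ as the composite 2-cell that first uses $\chi_X^{-1}$ to commute $X$ past $\chi^0_B$ (replacing $\chi^0_B\circ X$ with $\Fc\Gc(X)\circ\chi^0_A$), then uses the pseudofunctor coherence of $\Fc$ together with $\Fc(\sigma_{\Gc(X)})$ to swap $V_{\Gc(B)}$ past $\Gc(X)$, and finally uses $\eta_X^{-1}$ to commute $X$ past $\eta^0_A$. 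On a morphism $f:(V,\sigma)\to(W,\tau)$ in $Z(\Bc)$, set $\widehat{\Fc}(f)_B := \id_{\eta^0_B}\circ \Fc(f_{\Gc(B)})\circ \id_{\chi^0_B}$.

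The verification splits into four steps: (i) $\widehat\sigma$ satisfies \eqref{braid-cent1}, so $(\widehat V,\widehat\sigma) \in Z(\wbc)$; (ii) $\widehat{\Fc}(f)$ satisfies \eqref{monoidal-cent0}, making $\widehat{\Fc}$ a functor; (iii) $\widehat{\Fc}$ respects the tensor product \eqref{monoidal-cent1} and the unit, the key observation being that the interior composite $\chi^0_B\circ\eta^0_B$ appearing in $\widehat V_B\circ \widehat W_B$ can be absorbed by an invertible modification coming from $\eta\circ\chi\sim\Id$; and (iv) $\widehat{\Fc}$ is an equivalence, shown by constructing $\widehat{\Gc}:Z(\wbc)\to Z(\Bc)$ symmetrically via $\Gc$ and producing natural isomorphisms $\widehat{\Gc}\circ\widehat{\Fc}\simeq\Id$ and $\widehat{\Fc}\circ\widehat{\Gc}\simeq\Id$ from the invertible modifications witnessing $\Gc\Fc\sim\Id_\Bc$ and $\Fc\Gc\sim\Id_\wbc$.

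The main obstacle is step (i). Verifying $\widehat\sigma_{X\circ Y}=(\id_X\circ\widehat\sigma_Y)(\widehat\sigma_X\circ\id_Y)$ for composable 1-cells $X,Y$ requires interleaving the pseudofunctor coherence of $\Fc$ applied to $\Gc(X\circ Y) \simeq \Gc(X)\circ \Gc(Y)$, the pseudonaturality axiom \eqref{pseudonat-def1} for both $\eta$ and $\chi$, and the original braiding axiom in $Z(\Bc)$ applied to $\sigma_{\Gc(X)\circ\Gc(Y)}$. Lemma \ref{dual-of-pseudonat-equi} is helpful here since it expresses the dual 1-cells and 2-cells arising from $\eta^0$ in terms of concrete pseudonaturality data of $\chi$, avoiding direct manipulation of abstract rigidity data; the payoff is that each instance of $\chi_X$ and $\eta_X$ can be broken into its $X$- and $Y$-parts cleanly via \eqref{pseudonat-def1}, leaving only the original braiding equation in $Z(\Bc)$ to close the calculation.
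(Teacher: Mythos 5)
Your construction is essentially identical to the paper's: you define $\widehat{\Fc}(V)_B$ by conjugating $\Fc(V_{\Gc(B)})$ with the unit/counit 1-cells of the biequivalence, build the half-braiding from the same composite (pseudonaturality 2-cells, pseudofunctor coherence, and $\Fc(\sigma_{\Gc(X)})$), and obtain the monoidal structure by absorbing the interior composite via the invertible modification, exactly as in the paper. The only cosmetic difference is your appeal to Lemma \ref{dual-of-pseudonat-equi} in step (i), which is not needed there (no duals appear in the braiding verification; the paper reserves that lemma for Theorem \ref{adj-to2-equiva}).
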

\pf Assume that  $(\Fc,\alpha):\Bc\to \wbc$, $(\Gc,\alpha'):\wbc\to \Bc$ is a pair of biequivalences, that is, there is a pseudonatural equivalence $\chi:\Fc\circ \Gc\to \Id_{\wbc}$. Let $\tau:\Id_{\wbc}\to \Fc\circ \Gc$ be the inverse of $\chi$. This means that $\chi\circ \tau\sim \id_{\Id}$, and $\tau\circ \chi\sim \id_{\Fc\circ \Gc}$.
In particular, for any pair of 0-cells $C, D\in \wbc^0,$
$\chi^0_C\in \wbc(\Fc(\Gc(C),C)$, $\tau^0_C\in \wbc(C, \Fc(\Gc(C))$ are 1-cells, and for any 1-cell $Y\in \wbc(C,D)$ we have 2-cells
$$\chi_Y: Y\circ \chi^0_C\Longrightarrow \chi^0_D\circ \Fc(\Gc(Y)),$$
$$\tau_Y: \Fc(\Gc(Y))\circ \tau^0_C\Longrightarrow\tau^0_D\circ Y. $$
Let $\omega: \tau\circ \chi\to \id_{\Fc\circ \Gc}$ be an invertible modification.

Let be $(V,\sigma)\in Z(\Bc) $. Let us define $\widehat{\Fc}(V,\sigma)\in Z(\wbc)$ as follows. For  any 0-cell $C\in \wbc^0$
$$ \widehat{\Fc}(V)_C=\chi^0_C\circ \Fc(V_{\Gc(C)})\circ \tau^0_C.  $$
If $C, D\in \wbc^0$ are 0-cells and $Y\in \wbc(C,D)$, then
$$\widehat{\Fc}(\sigma)_Y:\widehat{\Fc}(V)_D\circ Y\to Y\circ\widehat{\Fc}(V)_C,  $$
is defined to be the composition
$$\chi^0_D\circ \Fc(V_{\Gc(D)})\circ \tau^0_D \circ Y\xrightarrow{\id\circ (\tau_Y)^{-1}}\chi^0_D\circ \Fc(V_{\Gc(D)})\circ  \Fc(\Gc(Y))\circ \tau^0_C \to$$
$$\xrightarrow{\id\circ \alpha \circ \id}\chi^0_D\circ \Fc( V_{\Gc(D)}\circ  \Gc(Y)) \circ \tau^0_C \xrightarrow{\id\circ \Fc(\sigma_{\Gc(Y)})\circ \id }\chi^0_D\circ \Fc( \Gc (Y) \circ V_{\Gc(C)}) \circ \tau^0_C$$
$$\xrightarrow{\id\circ \alpha^{-1} \circ \id}  \chi^0_D\circ \Fc( \Gc(Y)) \circ \Fc(V_{\Gc(C)} ) \circ \tau^0_C\xrightarrow{ (\chi_Y)^{-1}\circ \id } Y\circ \chi^0_C\circ  \Fc(V_{\Gc(C)} ) \circ \tau^0_C.$$

Here we are omitting the subscripts of $\alpha$ as a space saving measure. It follows from the property of the half-braiding $\sigma $ \eqref{braid-cent1} and from \eqref{pseudonat-def1} that $\widehat{\Fc}(\sigma)$ satisfies \eqref{braid-cent1}. Thus $\widehat{\Fc}$ defines a functor, and one can prove that $\widehat{\Gc}$ is a quasi-inverse of $\widehat{\Fc}$.

Let us define a monoidal structure of the functor $\widehat{\Fc}$. Let be $(V,\sigma), (W,\gamma)\in Z(\Bc) $, then define
$$ \xi_{(V,\sigma), (W,\gamma)}:\widehat{\Fc}(V,\sigma)\ot  \widehat{\Fc}(W,\gamma)\to \widehat{\Fc}((V,\sigma)\ot (W,\gamma))$$
as follows. For any 0-cell $C\in\wbc^0$ 
$$\xi_C: \chi^0_C\circ \Fc(V_{\Gc(C)})\circ \tau^0_C\circ\chi^0_C\circ \Fc(W_{\Gc(C)})\circ \tau^0_C\to \chi^0_C\circ  \Fc(V_{\Gc(C)}\circ W_{\Gc(C)})\circ \tau^0_C,$$
$$\xi_C=  \big(\id\circ \alpha_{V_{\Gc(C)},W_{\Gc(C)} }\circ\id\big)\big(\id\circ \omega_C\circ \id\big). $$

Here $\xi_C=\big( \xi_{(V,\sigma), (W,\gamma)}\big)_C$. The proof that $(\widehat{\Fc}, \xi)$ is monoidal follows from the fact that $\omega$ is a modification and $\alpha$ satisfies \eqref{pseudofunc-axiom1}. 
\epf

\begin{teo}\label{equivalence-centers} Let  $\ca$ be a finite tensor category. There are monoidal equivalences  $Z(\camod)\simeq Z(\camod_e) \simeq Z(\ca)^{\rev}.$ 
\end{teo}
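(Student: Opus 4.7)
The plan is to prove the second monoidal equivalence $Z(\camod_e) \simeq Z(\ca)^{\rev}$ directly by an evaluation functor at the regular module category $\ca$, and then deduce the first equivalence $Z(\camod) \simeq Z(\camod_e)$ by observing that the same construction works verbatim on $Z(\camod)$, so both centers are equivalent to $Z(\ca)^{\rev}$.

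First I would construct an evaluation functor $\Phi:Z(\camod_e)\to Z(\ca)^{\rev}$ as follows. Given $(V,\sigma)\in Z(\camod_e)$, the component $V_\ca\in\Fun_\ca(\ca,\ca)$ is a module endofunctor, and since the module structure forces $V_\ca(X)\simeq X\ot V_\ca(\uno)$ naturally, the assignment $V\mapsto V_\ca(\uno)$ yields a monoidal equivalence $\Fun_\ca(\ca,\ca)\simeq\ca^{\rev}$. Set $\Phi(V,\sigma)=(V_\ca(\uno),\rhob)$, where the half-braiding $\rhob_X$ is built from the 2-cell $\sigma_{R_X}:V_\ca\circ R_X\to R_X\circ V_\ca$ associated to the module endofunctor $R_X:\ca\to\ca$, $R_X(Y)=Y\ot X$. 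Evaluating $\sigma_{R_X}$ at $\uno$ and composing with the module isomorphisms $V_\ca(X)\simeq X\ot V_\ca(\uno)$ and $V_\ca(\uno)\ot X\simeq R_X(V_\ca(\uno))$ produces a natural isomorphism $X\ot V_\ca(\uno)\to V_\ca(\uno)\ot X$, i.e.\ a half-braiding in $Z(\ca^{\rev})\simeq Z(\ca)^{\rev}$ via \eqref{monoidal-center-rev}. The axiom \eqref{braid-cent1} for $\sigma$, together with the fact that $R_{X\ot Y}\simeq R_Y\circ R_X$, translates directly into the hexagon \eqref{braid-cent0} for $\rhob$.

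For a quasi-inverse $\Psi:Z(\ca)^{\rev}\to Z(\camod_e)$, given $(V,\rhob)$ I would assign, for every exact $\ca$-module $\Mo$, the endofunctor $V_\Mo := V\otb(-)$, made into a module functor using $\rhob$ via $V\otb(X\otb M)\simeq(V\ot X)\otb M\to(X\ot V)\otb M\simeq X\otb(V\otb M)$. For any module functor $F:\Mo\to\No$ with module structure $c_{X,M}$, the 2-cell $\sigma_F:V_\No\circ F\to F\circ V_\Mo$ is the natural isomorphism $V\otb F(M)\to F(V\otb M)$ coming from $c^{-1}_{V,M}$. The axiom \eqref{braid-cent1} for the resulting $\sigma$ is exactly the coherence for composition of module functors \eqref{modfunctor-comp}, while naturality in $F$ is naturality of $c$ as a module transformation \eqref{modfunctor3}. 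Monoidality of both $\Phi$ and $\Psi$ follows from unpacking the monoidal product \eqref{monoidal-cent1} in $Z(\camod_e)$ and comparing it with the tensor product in $\ca^{\rev}$. That $\Phi\circ\Psi\simeq\Id$ is immediate, since $\Psi(V,\rhob)$ evaluated at $\ca$ is $V\ot(-)$, whose value at $\uno$ is $V$; for $\Psi\circ\Phi\simeq\Id$ I would use that, for any $\Mo$, the module functor category $\Fun_\ca(\ca,\Mo)\simeq\Mo$ through $F\mapsto F(\uno)$, so $V_\Mo$ is determined up to canonical isomorphism by the single datum $V_\ca$ together with the pseudonaturality 2-cells $\sigma_H$ for $H:\ca\to\Mo$.

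Finally, for the equivalence $Z(\camod)\simeq Z(\camod_e)$, I would observe that the evaluation functor $\Phi$ and its quasi-inverse $\Psi$ are defined \emph{verbatim} on all of $\camod$, not only on $\camod_e$: nowhere in the construction above is exactness of $\Mo$ used, only that $\Mo$ is a left $\ca$-module category. Hence the same argument yields a monoidal equivalence $Z(\camod)\simeq Z(\ca)^{\rev}$, and the inclusion $\camod_e\hookrightarrow\camod$ induces the restriction functor $Z(\camod)\to Z(\camod_e)$ which fits into a commutative triangle with the two evaluations, forcing it to be a monoidal equivalence.

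The main obstacle I expect is the bookkeeping of the direction reversal that produces $Z(\ca)^{\rev}$ rather than $Z(\ca)$, and the coherent verification that the axioms \eqref{braid-cent1} and \eqref{pseudonat-def1}--\eqref{pseudonat-def2} for the pseudonatural transformation $\sigma$ correspond exactly, under $\Phi$ and $\Psi$, to the hexagon axioms \eqref{braid-cent0} and to the module functor axioms \eqref{modfunctor1}--\eqref{modfunctor2}; this is conceptually clean but requires a patient diagram chase on the horizontal and vertical compositions of 2-cells in $\camod$.
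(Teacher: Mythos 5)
Your proposal is correct and follows essentially the same route as the paper: the evaluation $\Phi(V,\sigma)=(V_\ca(\uno),\,\cdot\,)$ with half-braiding obtained from $(\sigma_{R_X})_\uno$ and the module structure of $V_\ca$, the quasi-inverse $\Psi(V,\rhob)_\Mo=V\otb(-)$ with $\sigma_F=c^{-1}_{V,-}$, and the isomorphism $\Psi\Phi\simeq\Id$ built from the pseudonaturality cells $(\sigma_{R_M})_\uno$ via $\Fun_\ca(\ca,\Mo)\simeq\Mo$. The paper likewise treats $Z(\camod)$ and $Z(\camod_e)$ by the identical argument (stating the exact case follows \emph{mutatis mutandis}), so your final step is also the same in substance.
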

\pf We define a pair of  functors $\Phi: Z(\camod)\to Z(\ca)^{rev}$, $\Psi: Z(\ca)^{rev}\to Z(\camod)$, that will establish an equivalence. First, let us describe what an object in  $Z(\camod)$ looks like. If $((V_\Mo),\sigma)\in Z(\camod)$, then for any left $\ca$-module category $\Mo$, $(V_\Mo, c^{V_\Mo}):\Mo\to \Mo$ is a $\ca$-module functor, and for any $\ca$-module functor  $(G,d):\Mo\to \No$, $\sigma_G:V_\No\circ G\to  G\circ V_\Mo$ is a family of natural module isomorphisms that satisfies \eqref{braid-cent1}. For later use, let us recall that  the $\ca$-module structure of the functors $V_\No \circ G$ and $G \circ V_\Mo$ are given by $c^{V_\No}_{X,G(M)} V_\No (d_{X,M})$ and $d_{X,M} G(c^{V_\Mo}_{X,M})$ respectively, for any $X \in \ca$ and $M \in \Mo$.

Let us pick $((V_\Mo),\sigma)\in Z(\camod)$, and define $V:=V_\ca(\uno)\in \ca$. Here $\ca$ is considered as a left $\ca$-module via the regular action. Let us prove that  $V$ belongs to the center $Z(\ca),$ that is, it posses a half-braiding. 

For any $X\in \ca$, the functor $R_X:\ca\to \ca$, $R_X(Y)=Y\ot X$ is a left $\ca$-module functor. Thus we can consider the isomorphism $(\sigma_{R_X})_\uno:V_\ca(X)\to  V\ot X$. Since $V_\ca$ is a module functor, it comes with natural isomorphisms 
$$ c^{V_\ca}_{X,Y}: V_\ca(X\ot Y)\to X\ot V_\ca(Y),$$
for any $X, Y\in \ca$. In particular, we have natural isomorphisms
$$  c^{V_\ca}_{X,\uno}:  V_\ca(X) \to X\ot V,$$
for any $X\in \ca$. For any $X\in \ca$ define 
\begin{equation}\label{half-brading-phi}
    \alpha^{\sigma}_X: V\ot X\to X\ot V, \quad \alpha^{\sigma}_X=  c^{V_\ca}_{X,\uno}(\sigma_{R_X})^{-1}_\uno. 
\end{equation}
It follows by a straightforward computation that $\alpha^\sigma$ is a half-brading for $V$, that is $(V,\alpha^\sigma)\in Z(\ca). $ Therefore, we define $\Phi((V_\Mo), \sigma)=(V,\alpha^\sigma)$.

Now, given an object $(V,\alpha)\in Z(\ca)$, we define the functor $\Psi: Z(\ca)^{rev}\to Z(\camod)$ as $\Psi(V,\alpha)=((V_\Mo),\sigma^\alpha)$, where for any $\ca$-module category $\Mo$ 
$$V_\Mo:\Mo\to \Mo, \quad V_\Mo(M)=V\otb M,$$
for any $M\in \Mo$. The module structure of $V_\Mo$ is given by
$$c^{V_\Mo}_{X,M}:V_\Mo(X\otb  M)=V\otb (X\otb M)\to X\otb (V\otb M), $$  $$c^{V_\Mo}_{X,M}=m_{X,V,M} (\alpha_X\ot\id_M) m^{-1}_{V,X,M},$$
for any $X\in \ca$, $M\in \Mo$. Here $m$ denotes the associativity constraint of the module category $\Mo$.

If $(F,d): \Mo \to \No$ is a left $\ca$-module functor, $$\sigma^{\alpha}_F: V_\No \circ F \to F \circ V_\Mo, \quad \sigma^{\alpha}_F= d^{-1}_{V,-}.$$ 

For any 
$$f:((V_\Mo), \sigma) \to ((V_\ca (\uno))_\Mo , \sigma^{\alpha^\sigma}).$$ For each $\ca$-module $\Mo$, we must define a natural transformation $f_\Mo : V_\Mo \rightarrow (V_\ca (\uno))_\Mo$.
For any $M\in \Mo$, the functor $R_M:\ca\to \Mo$, $R_M(X)=X\otb M$ is a $\ca$-module functor. Thus, we can consider the half-brading
$$\sigma_{R_M}:V_{\Mo}\circ R_M\to R_M\circ V_{\ca}. $$
In particular we have the map
$$ (\sigma_{R_M})_{\uno}:V_{\Mo}(M) \to V_\ca (\uno) \otb M.$$
Hence, define $(f_\Mo)_M = (\sigma_{R_M})_\uno$ for any  $M\in \Mo$. Observe that $f_\Mo$ is a natural $\ca$-module isomorphism. Let us prove that $f$ defines a morphism in $Z(\camod)$. We need to show that $f$ satisfies equation \eqref{monoidal-cent0}, that is 
 
 \begin{equation}\label{monoidal-cent0-f}
     (\id_F \circ f_\Mo)_M(\sigma_F)_M = (\sigma^{\alpha^\sigma}_F)_M (f_\No \circ \id_F)_M
 \end{equation}
 for any $\ca$-module functor $(F,d): \Mo \rightarrow \No$ and any $M \in \Mo$. The left hand side of \eqref{monoidal-cent0-f} is
 \begin{align*}
    (\id_F \circ f_\Mo)_M(\sigma_F)_M & = F ((f_\Mo)_M) (\sigma_F)_M\\
    & = (\sigma_{F\circ R_M})_\uno \\ 
    & = d^{-1}_{V_\ca(\uno),M} (\sigma_{R_{F(M)}})_\uno \\ 
    & =(\sigma^{\alpha^\sigma}_F)_M (f_\Mo \circ \id_F)_M.
 \end{align*}
 
 where the first equality is the composition of natural transformations and the second equality follows from \eqref{braid-cent1}. The third equality follows from the fact that $\sigma$ is a natural module transformation. That is, since $d_{-,M} : F \circ R_M \rightarrow R_{F(M)}$ is the mdule structure of $F$, we have $(d_{-,M} \circ \id_{V_\ca}) \sigma_{F\circ R_M} = \sigma_{R_{F(M)}} (\id_{V_\No} \circ d_{-,M})$ and therefore $d_{V_\ca(\uno),M} (\sigma_{F\circ R_M})_\uno = (\sigma_{R_{F(M)}})_\uno$. The last equality is simply the definiton of $\sigma^{\alpha^\sigma}_F$.

This proves that $\Psi \Phi \simeq \Id_{Z(\camod)}$ . The proof of $\Phi \Psi \simeq \Id_{Z(\ca)}$ is straightfordward.

Let us prove now that the functor $\Phi$ is monoidal. Let us take two objects $((V_\Mo), \sigma), ((W_\Mo), \gamma) \in Z(\camod)$. Then
$$\Phi((V_\Mo), \sigma) \otimes^{\rev} \Phi((W_\Mo), \gamma) = (W_\C(\uno) \otimes V_\C(\uno), \alpha),$$
where, according to \eqref{monoidal-cent1}, the half-brading of the tensor product of two objects is 
$$\alpha_X= (\alpha^{\gamma}_X \otimes \id_{V_\C(\uno)}) (\id_{W_\C(\uno)} \otimes \alpha^{\sigma}_X)$$ for all $X \in \C$. On the other hand
$$\Phi(((V_\Mo), \sigma) \otimes ((W_\Mo), \gamma))= ( (V \otimes W)_\C(\uno), \alpha^{\sigma \otimes \gamma} )$$
where, according to \eqref{half-brading-phi}, the half-braiding is
$$\alpha^{\sigma \otimes \gamma}_X = c^{V_\Mo \circ W_\Mo}_{X, \uno} ( (\sigma \otimes \gamma)_{R_X})^{-1}_\uno.$$
$$c^{V_\Mo \circ W_\Mo}_{X, Y} = c^{V_\C}_{X,Y} V_\C (c^{W_\C}_{X,Y}) \quad \text{for all} \quad X,Y \in \C,$$
and 
$$((\sigma \otimes \gamma)_F)_M = (\sigma_F)_{W_\Mo(M)} V_\No((\gamma_F)_M),$$ 
for any $\C$-module functor $F: \Mo \rightarrow \No$  and any $M \in \Mo$.

The monoidal structure of $\Phi$ is defined as follows. 
$$\zeta^{\Phi}_{((V_\Mo), \sigma), ((W_\Mo), \gamma)} : \Phi((V_\Mo), \sigma) \otimes^{\rev} \Phi((W_\Mo), \gamma) \rightarrow \Phi(((V_\Mo), \sigma) \otimes ((W_\Mo), \gamma)),$$ 
$$\zeta^\Phi_{((V_\Mo), \sigma), ((W_\Mo), \gamma)}=(c^{V_\ca}_{W_\ca(\uno),\uno})^{-1}.$$

Let us show that $\zeta^\Phi$ is a morphism in $Z(\C)$. For this, we need to prove that it fulfills \eqref{0braid-cent1}, that is
\begin{equation}\label{monoidal-Phi} ((c^{V_\ca}_{W_\ca(\uno),\uno})^{-1} \otimes \id_X) \alpha_X = \alpha^{\sigma \otimes \gamma}_X (\id_X \otimes (c^{V_\ca}_{W_\ca(\uno),\uno})^{-1}),
\end{equation}
for any $X \in \C$. The left hand side of \eqref{monoidal-Phi} is

\begin{align*} & ((c^{V_\ca}_{W_\ca(\uno),\uno})^{-1} \otimes \id_X) \alpha_X = (\id_X \otimes (c^{V_\ca}_{W_\ca(\uno),\uno})^{-1}) (\alpha^{\gamma}_X \otimes \id_{V_\ca(\uno)} )\\ & ( \id_{W_\ca(\uno)} \otimes \alpha^{\sigma}_X )\\
&=c^{V_\ca}_{X,W_\ca(\uno)} (c^{V_\ca}_{X \otimes W_\ca(\uno),\uno})^{-1} (c^{W_\ca}_{X, \uno} (\gamma_{R_X})^{-1}_{\uno} \otimes \id_{V_\ca(\uno)})(\id_{W_\ca(\uno)} \otimes c^{V_\ca}_{X,\uno})\\ & \qquad (\id_{W_\ca(\uno)} \otimes (\sigma_{R_X})^{-1}_\uno)\\
&=c^{V_\ca}_{X,W_\ca(\uno)} (c^{V_\ca}_{X \otimes W_\ca(\uno),\uno})^{-1} (c^{W_\ca}_{X, \uno} (\gamma_{R_X})^{-1}_{\uno} \otimes \id_{V_\ca(\uno)}) (\id_{W_\ca(\uno)} \otimes c^{V_\ca}_{X,\uno}) c^{V_\ca}_{W_\ca(\uno),X} \\ & \qquad (\sigma_{R_X})^{-1}_{W_\ca(\uno)} ((c^{V_\ca}_{W_\ca(\uno),\uno})^{-1} \otimes \id_X)\\
&=c^{V_\ca}_{X,W_\ca(\uno)}     (c^{V_\ca}_{X \otimes W_\ca(\uno), \uno})^{-1} (c^{W_\ca}_{X,\uno} \otimes \id_{V_\ca(\uno)}) ((\gamma_{R_X})^{-1}_\uno \otimes \id_{V_\ca(\uno)})c^{V_\ca}_{W_\ca(\uno) \otimes X, \uno}                        \\ & \qquad    (\sigma_{R_X})^{-1}_{W_\ca(\uno)} ((c^{V_\ca}_{W_\ca(\uno),\uno})^{-1} \otimes \id_X)\\
&= c^{V_\ca}_{X,W_\ca(\uno)}       (c^{V_\ca}_{X \otimes W_\ca(\uno),\uno})^{-1} (c^{W_\ca}_{X,\uno} \otimes \id_{V_\ca(\uno)}) c^{V_\ca}_{W_\ca(X),\uno} V_\ca((\gamma_{R_X})^{-1}_\uno )       (\sigma_{R_X})^{-1}_{W_\ca(\uno)} \\ & \qquad ((c^{V_\ca}_{W_\ca(\uno),\uno})^{-1} \otimes \id_X)\\
&= c^{V_\ca}_{X,W_\ca(\uno)}        V_\ca(c^{W_\ca}_{X,\uno}) V_\ca((\gamma_{R_X})^{-1}_\uno )             (\sigma_{R_X})^{-1}_{W_\ca(\uno)} ((c^{V_\ca}_{W_\ca(\uno),\uno})^{-1} \otimes \id_X)\\
&= \alpha^{\sigma \otimes \gamma}_X (\id_X \otimes (c^{V_\ca}_{W_\ca(\uno),\uno})^{-1}).
\end{align*}

The first equality follows from the definition of $\alpha_X$, the second and fourth equalities follow from the axioms of $c^{V_\ca}$. The third equality follows  from the fact that for a given $Y \in \ca$ and $R_Y : \ca \rightarrow \ca$, $\sigma_{R_Y}$ is a natural module isomorphism satisfiying \eqref{modfunctor3}, and the module structures of $V_\ca \circ R_Y$ and $R_Y \circ V_\ca$ are given by $c^{V_\ca}_{X, Z \otimes Y}$ and $R_Y (c^{V_\ca}_{X,Z})$ for any $X,Z \in \ca$. Then from \eqref{modfunctor3} we have $$ (\id_X \otimes (\sigma_{R_Y})^{-1}_\uno)(c^{V_\ca}_{X,\uno} \otimes \id_Y)= c^{V_\ca}_{X,Y} (\sigma_{R_Y})^{-1}_X,$$ for all $X,Y \in \ca$.
The fifth and sixth equalities follow from the naturality of $c^{V_\ca}_{-, \uno}$, and the seventh follows from the definition of $\alpha^{\sigma \otimes \gamma}_X $.

It is straightforward that $\zeta^\Phi$ satisfies
the axiom required for $(\Phi,\zeta^\Phi)$ to be a monoidal functor.

The proof of the equivalence $Z(\camod_e) \simeq Z(\ca)^{\rev}$ follows \textit{mutatis mutandis}.
\epf

We want to apply Proposition \ref{center-2cat-monoid} to the 2-category of $\ca$-modules. Let $\ca$ be a finite tensor category and $\No$ be an indecomposable exact left $\ca$-module category. Set $\Do=(\ca^*_{\No})^{\rev}$. Then $\No$ is an invertible exact $(\ca, \Do)$-bimodule category. Thus, we can consider the 2-equivalence $\theta^\No: \camod_e\to {}_\Do\text{Mod}_e$ presented in Proposition \ref{higher-Morita}. According to Proposition \ref{center-2cat-monoid}, this 2-equivalence induces a monoidal equivalence 
$\widehat{\theta}^\No:Z(\camod_e)\to Z({}_\Do\text{Mod}_e)$.

There is a commutative diagram of monoidal equivalences

\begin{equation}\label{theta-equival-center}
\xymatrix{
Z(\ca)^{\rev}\ar[d]_{\simeq }\ar[rr]^{\theta}&& Z(\Do)^{\rev}\ar[d]^{\simeq} \\
Z(\camod_e)\ar[rr]^{\widehat{\theta}^\No}&& Z({}_\Do\text{Mod}_e)}
\end{equation}
Equivalences in the vertical arrows come from Theorem \ref{equivalence-centers}, and the functor $\theta: Z(\ca)\to Z(\Do)$ is given by $\theta(V,\sigma):\No\to \No,$ $\theta(V,\sigma)(N)=V\otb N$, for all $N\in \No$. The functor $\theta$ coincides with the one presented by Shimizu in \cite[Theorem 3.13]{Sh2}. See also \cite{Sch}.

\section{The adjoint algebra for finite 2-categories}\label{Section:adj-2cat}

Throughout this section $\Bc$ will denote a finite 2-category. For any pair of 0-cells $A, B$ of $\Bc$ we define the 1-cell
\begin{equation}\label{coend-2cat} \ele(A,B)=\int_{X\in \Bc(A,B)} {}^* X\circ X\in \Bc(A,A).
\end{equation}
For any $X\in \Bc(A,B)$ we will denote by $\pi^{(A,B)}_X: \ele(A,B) \xrightarrow{..}  {}^* X\circ X$ the dinatural transformations associated to this end. Since  all categories $\Bc(A,B)$ are finite categories, the end $\ele(A,B)$ always exists.

\begin{prop}\label{half-braidig-2cat}  Assume that $A,B,C$ are 0-cells. There exists a natural isomorphism 
$$ \sigma^{B}_X:\ele(A,B)\circ X  \Rightarrow X\circ \ele(C,B)$$
such that the diagram
\begin{equation}\label{definition-diagram-braiding}
\xymatrix@C=70pt@R=16pt{
	\ele(A,B)\circ X
	\ar[r]^{\pi^{(A,B)}_Y \circ \id_X}
	\ar[d]_{\sigma^{B}_X}
	& {}^*Y\circ Y\circ X \\
	X\circ \ele(C,B)
	\ar[r]_{\id_X\circ \pi^{(C,B)}_{Y\circ X}}
	& X\circ {}^* X\circ {}^* Y\circ Y\circ X 	\ar[u]_{\ev_X\circ\id_{{}^*Y\circ Y\circ X}} .
}
\end{equation}
is commutative for any $X\in \Bc(C,A), Y\in \Bc(A,B)$. With this map the collection $\adj_B=((\ele(A,B)_{A\in \Bc^0 }), \sigma^{B})$ is an object in the center $Z(\Bc)$ for any 0-cell $B\in \Bc^0$. 
\end{prop}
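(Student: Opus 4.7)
My plan is to construct $\sigma^B_X$ via the universal property of the end on the right-hand side, and then to derive the diagram, the naturality, the invertibility, and the center axioms by appealing to uniqueness in that same universal property.

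Since $\Bc$ is rigid, horizontal composition with any fixed 1-cell on either side is simultaneously a left and a right adjoint (to composition with the opposite dual), so by Proposition \ref{properties-end}(i) it preserves ends. In particular
\[
\ele(A,B)\circ X=\int_{Y\in\Bc(A,B)}{}^*Y\circ Y\circ X,\qquad
X\circ\ele(C,B)=\int_{Z\in\Bc(C,B)}X\circ{}^*Z\circ Z,
\]
with dinaturals $\pi^{(A,B)}_Y\circ\id_X$ and $\id_X\circ\pi^{(C,B)}_Z$, respectively. I construct $\sigma^B_X$ by producing a dinatural family $\rho_Z:\ele(A,B)\circ X\to X\circ{}^*Z\circ Z$ indexed by $Z\in\Bc(C,B)$ and invoking the universal property of $X\circ\ele(C,B)$. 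For each $Z$, the 1-cell $Z\circ X^{*}\in\Bc(A,B)$ has left dual canonically identified with $X\circ{}^*Z$ by Lemma \ref{dual-of-composition}(ii) together with the canonical identification ${}^*(X^{*})\simeq X$. This yields
\[
\pi^{(A,B)}_{Z\circ X^{*}}:\ele(A,B)\longrightarrow X\circ{}^*Z\circ Z\circ X^{*},
\]
and I set $\rho_Z:=(\id\circ\ev_X)(\pi^{(A,B)}_{Z\circ X^{*}}\circ\id_X)$, where $\ev_X:X^{*}\circ X\to I_C$ is the right-dual evaluation. Dinaturality of $\rho_Z$ in $Z$ is inherited from dinaturality of $\pi^{(A,B)}$ along $f\circ\id_{X^{*}}$, combined with horizontal interchange; the universal property then yields a unique $\sigma^B_X$ with $(\id_X\circ\pi^{(C,B)}_Z)\sigma^B_X=\rho_Z$ for every $Z$, and naturality of this family in $X$ transports to naturality of $\sigma^B_X$.

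To verify the diagram \eqref{definition-diagram-braiding}, I specialize to $Z=Y\circ X$ and compose with $\ev_X\circ\id$, now the left-dual evaluation $X\circ{}^*X\to I_A$. Applying dinaturality of $\pi^{(A,B)}$ along the morphism $\id_Y\circ\coev_X:Y\to Y\circ X\circ X^{*}$, together with the standard rigidity identities relating left- and right-dual evaluations and coevaluations (in particular ${}^*(\coev_X)=\ev_X$ and $(\id_X\circ\ev_X)(\coev_X\circ\id_X)=\id_X$), the intermediate $X$--${}^*X$ and $X^{*}$--$X$ insertions cancel and the composite collapses to $\pi^{(A,B)}_Y\circ\id_X$, as required. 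For invertibility I build $\tilde h:X\circ\ele(C,B)\to\ele(A,B)\circ X$ by the dual strategy, using the dinatural family $\rho'_Y=(\ev_X\circ\id)(\id_X\circ\pi^{(C,B)}_{Y\circ X})$ and the universal property of $\ele(A,B)\circ X$; then $\tilde h\sigma^B_X=\id$ and $\sigma^B_X\tilde h=\id$ follow from uniqueness, the second using dinaturality of $\pi^{(C,B)}$ along $\id_Z\circ\ev_X:Z\circ X^{*}\circ X\to Z$.

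Finally, to conclude $\adj_B\in Z(\Bc)$ I verify \eqref{braid-cent1}. The identity $\sigma^B_{I_A}=\id_{\ele(A,B)}$ is immediate from the defining diagram and the uniqueness clause. For the composition axiom $\sigma^B_{X\circ X'}=(\id_X\circ\sigma^B_{X'})(\sigma^B_X\circ\id_{X'})$, both sides composed with $\id_{X\circ X'}\circ\pi^{(C,B)}_Z$ reduce to the same dinatural family in $Z$, after unfolding $(X\circ X')^{*}\simeq (X')^{*}\circ X^{*}$ and the factorization $\ev_{X\circ X'}=\ev_{X'}(\id_{(X')^{*}}\circ\ev_X\circ\id_{X'})$; the uniqueness part of the universal property then gives the equality. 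The main obstacle throughout is the bookkeeping of the interplay between left- and right-dual evaluations and the canonical identifications of iterated duals; once horizontal interchange is organized carefully, each step reduces to a rigidity identity and an appeal to the uniqueness of a universal map.
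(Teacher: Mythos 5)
Your proof is correct, but it organizes the construction in the opposite direction from the paper, and the comparison is worth recording. The paper first builds the \emph{inverse} map $\overline{\sigma}^{B}_X\colon X\circ \ele(C,B)\to \ele(A,B)\circ X$ by the universal property of $\ele(A,B)\circ X=\int_{Y\in\Bc(A,B)}{}^*Y\circ Y\circ X$ applied to the dinatural family $(\ev_X\circ\id)(\id_X\circ \pi^{(C,B)}_{Y\circ X})$ --- so diagram \eqref{definition-diagram-braiding} is, up to inverting, the \emph{definition} rather than something to be verified --- then proves the multiplicativity $\overline{\sigma}^{B}_{Y\circ X}=(\overline{\sigma}^{B}_Y\circ\id_X)(\id_Y\circ\overline{\sigma}^{B}_X)$, and only then produces $\sigma^{B}_X$ by the conjugation formula $(\id\circ \ev_X)(\id_X\circ\overline{\sigma}^{B}_X\circ\id_X)(\coev_X\circ\id)$, invoking the argument of \cite[Lemma 2.10]{DSS} to see that this is a two-sided inverse. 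You instead construct $\sigma^{B}_X$ directly into the end $X\circ\ele(C,B)=\int_{Z\in\Bc(C,B)}X\circ{}^*Z\circ Z$ via the reindexed family $\rho_Z=(\id\circ\ev_X)(\pi^{(A,B)}_{Z\circ X^{*}}\circ\id_X)$, which shifts the work: commutativity of \eqref{definition-diagram-braiding} now requires the dinaturality of $\pi^{(A,B)}$ along $\id_Y\circ\coev_X$ together with a zig-zag identity (your computation is right), but invertibility becomes a symmetric two-sided check against the two universal properties, with no need to establish multiplicativity of the half-braiding first --- your $\tilde h$ is literally the paper's $\overline{\sigma}^{B}_X$. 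Since \eqref{definition-diagram-braiding} determines the inverse uniquely, your $\sigma^{B}_X$ agrees with the paper's, and your verification of \eqref{braid-cent1} parallels the paper's proof of the corresponding identity for $\overline{\sigma}^{B}$. The one place where you should be more explicit is the canonical identification ${}^*(Z\circ X^{*})\simeq X\circ{}^*Z$ (i.e., ${}^*(X^{*})\simeq X$ via the right-duality data, which is what makes $\pi^{(A,B)}_{Z\circ X^{*}}$ land where you need it) and the identity ${}^*(\coev_X)=\ev_X$ used to collapse $\rho_{Y\circ X}$; both are correct under the paper's conventions, but they are exactly where a convention slip between left and right duals would hide.
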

\pf  The end $\int_{Y\in \Bc(A,B)} {}^* Y\circ Y\circ X$ is equal to $\ele(A,B)\circ X$ with dinatural transformations $\pi^{(A,B)}_Y \circ \id_X$. Since the maps
$$(\ev_X\circ\id_{{}^*Y\circ Y\circ X})(\id_X\circ \pi^{(C,B)}_{Y\circ X}):X\circ \ele(C,B) \to {}^* Y\circ Y\circ X$$ are dinatural transformations, it follows from the universal property of the end that,  for any $X\in \Bc(C,A)$, there exists a map $\overline{\sigma}^{B}_X: X\circ \ele(C,B) \to \ele(A,B)\circ X$ such that
\begin{equation}\label{prop-inv-sigma} (\pi^{(A,B)}_Y \circ \id_X)\overline{\sigma}^{B}_X=  ( \ev_X\circ\id_{{}^*Y\circ Y\circ X})(\id_X\circ \pi^{(C,B)}_{Y\circ X}).
\end{equation}
It follows easily that morphisms $\overline{\sigma}^{B}_X$ are natural in $X$. Let us prove that for any $Y\in \Bc(A,B)$
\begin{align}\label{braid-eq-inv}  \overline{\sigma}^{B}_{Y\circ X}= (\overline{\sigma}^{B}_Y\circ \id_X)(\id_Y\circ \overline{\sigma}^{B}_X).
\end{align}
Let $E$ be another 0-cell, and $Z\in \Bc(D,E)$ be a 1-cell. Then it follows from \eqref{prop-inv-sigma} that
\begin{align*}( \pi^{(D,B)}_{Z}\circ \id_{Y\circ X})  \overline{\sigma}^{B}_{Y\circ X}= ( \ev_{Y\circ X}\circ\id_{{}^*Z\circ Z\circ Y\circ X})(\id_{Y\circ X}\circ \pi^{(C,B)}_{Z\circ Y\circ X}).
\end{align*}
On the other hand
\begin{align*}&( \pi^{(D,B)}_{Z}\circ \id_{Y\circ X})(\overline{\sigma}^{B}_Y\circ \id)(\id\circ \overline{\sigma}^{B}_X)=(( \pi^{(D,B)}_{Z}\circ \id_{Y})\overline{\sigma}^{B}_Y\circ \id_X)(\id_Y\circ \overline{\sigma}^{B}_X)\\
&= ( \ev_Y\circ\id_{{}^*Z\circ Z\circ Y\circ X})(\id_Y\circ \pi^{(C,B)}_{Z\circ Y}\circ \id_X)(\id_Y\circ \overline{\sigma}^{B}_X)\\
&=( \ev_Y\circ\id_{{}^*Z\circ Z\circ Y\circ X}) ( \id_Y\circ \ev_X\circ\id_{{}^*(Z\circ Y)\circ Z\circ Y\circ X})( \id_{Y\circ X}\circ \pi^{(C,B)}_{Z\circ Y\circ X})
\end{align*}
Whence 
$$( \pi^{(D,B)}_{Z}\circ \id_{Y\circ X})  \overline{\sigma}^{B}_{Y\circ X}= ( \pi^{(D,B)}_{Z}\circ \id_{Y\circ X})(\overline{\sigma}^{B}_Y\circ \id)(\id\circ \overline{\sigma}^{B}_X).$$
Then, it follows from the universal property of the end, that  equation \eqref{braid-eq-inv} is satisfied. It remains to prove that for any $X$ the map $\overline{\sigma}^{B}_X$ is an isomorphism. The idea of the proof  of this fact is taken from \cite[Lemma 2.10]{DSS}.

For any $X\in \Bc(C,A)$ define 
$$\sigma^{B}_X=  (\id\circ ev_X)(\id_X\circ \overline{\sigma}^{B}_X\circ \id_X )(coev_X\circ \id).$$
One can prove, using the naturality of $ \overline{\sigma}^{B}_X$, the rigidity axioms and  \eqref{braid-eq-inv}, that $\sigma^{B}_X$ is indeed the inverse of $ \overline{\sigma}^{B}_X$.
\epf

\begin{rmk} Keep in mind that in diagram \eqref{definition-diagram-braiding} we are omitting the isomorphism ${}^*(Y\circ X)\simeq {}^*X\circ {}^*Y$.
\end{rmk}

The particular choice of the dinaturals in the coend \eqref{coend-2cat} does not change the equivalence class of the object $(\adj_B, \sigma^B)\in Z(\Bc)$. This is the next result.

\begin{lema}\label{invariance-dinaturalchoice}  Assume that for any 0-cell $A\in \Bc^0$,  $\gamma^{(A,B)}_X:  \ele(A,B) \xrightarrow{..}  {}^* X\circ X$ is another choice of dinatural transformations for this end. And let $$ \eta^{B}_X:\ele(A,B)\circ X  \Rightarrow X\circ \ele(C,B)$$ be the  half-braiding associated with these dinatural transformations. Then $((\ele(A,B)_{A\in \Bc^0 }), \sigma^{B}), ((\ele(A,B)_{A\in \Bc^0 }), \eta^{B})$ are isomorphic  as objects in the center $Z(\Bc)$. 
\end{lema}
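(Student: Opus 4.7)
The strategy is to produce a canonical isomorphism in $Z(\Bc)$ between the two objects. By the universal property of the end, since both dinatural families $\pi^{(A,B)}$ and $\gamma^{(A,B)}$ exhibit $\ele(A,B)$ as $\int_{X\in \Bc(A,B)} {}^*X\circ X$, there is a unique 2-cell $\phi_A : \ele(A,B) \to \ele(A,B)$ such that $\gamma^{(A,B)}_X = \pi^{(A,B)}_X \,\phi_A$ for every $X \in \Bc(A,B)$; exchanging the roles of $\pi$ and $\gamma$ yields a two-sided inverse, so each $\phi_A$ is an isomorphism. The claim is that $\phi := (\phi_A)_{A \in \Bc^0}$ defines an isomorphism $((\ele(A,B)), \eta^B) \to ((\ele(A,B)), \sigma^B)$ in $Z(\Bc)$.

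According to \eqref{monoidal-cent0}, this reduces to verifying, for every 1-cell $X \in \Bc(C, A)$, the identity
\[
(\id_X \circ \phi_C)\,\eta^B_X \;=\; \sigma^B_X\,(\phi_A \circ \id_X).
\]
Taking inverses via $\overline{\sigma}^B_X$ and $\overline{\eta}^B_X$ as constructed in the proof of Proposition \ref{half-braidig-2cat}, the identity is equivalent to
\[
(\phi_A \circ \id_X)\,\overline{\eta}^B_X \;=\; \overline{\sigma}^B_X\,(\id_X \circ \phi_C),
\]
an equality of 2-cells $X\circ \ele(C,B) \to \ele(A,B)\circ X$. By Proposition \ref{properties-end}(i) applied to the functor $- \circ X$, which admits both adjoints since $\Bc$ is rigid, $\ele(A,B) \circ X$ is an end with dinaturals $\pi^{(A,B)}_Y \circ \id_X$; consequently the equality may be tested by composition with these dinaturals for each $Y \in \Bc(A,B)$.

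Composing the left-hand side with $\pi^{(A,B)}_Y \circ \id_X$ and using $\pi^{(A,B)}_Y\,\phi_A = \gamma^{(A,B)}_Y$ together with the defining relation \eqref{prop-inv-sigma} for $\overline{\eta}^B_X$, one obtains $(\ev_X \circ \id)(\id_X \circ \gamma^{(C,B)}_{Y\circ X})$. Composing the right-hand side with the same dinatural, applying \eqref{prop-inv-sigma} for $\overline{\sigma}^B_X$ and then the identity $\pi^{(C,B)}_{Y\circ X}\,\phi_C = \gamma^{(C,B)}_{Y\circ X}$, yields the same expression. The universal property forces the two sides to coincide, so $\phi$ is a morphism in $Z(\Bc)$, and since each $\phi_A$ is invertible, $\phi$ is an isomorphism. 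The only delicate point is recognising that the target $\ele(A,B) \circ X$ itself inherits an end structure, so that the dinaturals can be ``cancelled''; this is precisely where rigidity of $\Bc$ enters the argument through Proposition \ref{properties-end}(i).
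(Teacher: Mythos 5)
Your proposal is correct and follows essentially the same route as the paper: the comparison 2-cell is obtained from the universal property of the end (your $\phi_A$ is the inverse of the paper's $h_A$, defined by the relation $\gamma^{(A,B)}_X=\pi^{(A,B)}_X\phi_A$ rather than $\pi^{(A,B)}_X=\gamma^{(A,B)}_Xh_A$), and compatibility with the half-braidings is checked on the inverses $\overline{\sigma}^B_X,\overline{\eta}^B_X$ by post-composing with the dinaturals exhibiting $\ele(A,B)\circ X$ as an end and invoking \eqref{prop-inv-sigma}. The only (harmless) differences are the direction of the map and that you make the invertibility of $\phi_A$ explicit, which the paper leaves implicit.
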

\pf Since $\gamma^{(A,B)}$ are dinaturals, there exists a map $h_A:  \ele(A,B)\to  \ele(A,B)$ such that the diagram
\begin{equation}\label{diagram-invar}
\xymatrix{&\ele(A,B) 
\ar[dl]_{h_A}
\ar[dr]^{\pi^{(A,B)}_X}&\\
\ele(A,B) \ar[rr]^{\gamma^{(A,B)}_X}&&   {}^* X\circ X}
\end{equation}
commutes. Let $\widetilde{\adj}_B=((\ele(A,B)_{A\in \Bc^0 }), \eta^{B})$, and set $h: \adj_B\to \widetilde{\adj}_B$. Let us check that $h$  is a morphism in the center. We need to verify that, for any 1-cell $X\in \Bc(A,B)$
\begin{equation}\label{invariance-eq1}
(\eta^{B}_X)^{-1}(\id_X\circ h_A) = (h_A\circ \id_X)(\sigma^B_X)^{-1}.
\end{equation}
Let $C$ be antoher 0-cell, and $Y\in \Bc(B,C)$. To prove \eqref{invariance-eq1}, it is sufficient to prove that 
\begin{equation}\label{invariance-eq2}( \gamma^{(B,C)}_Y\circ \id_X)(\eta^{B}_X)^{-1}(\id_X\circ h_A)= ( \gamma^{(B,C)}_Y\circ \id_X)(h_A\circ \id_X)(\sigma^B_X)^{-1}.
\end{equation}
Using diagram \eqref{diagram-invar}, the right hand side of \eqref{invariance-eq2} is equal to
\begin{align*} &=( \pi^{(B,C)}_Y\circ \id_X)(\sigma^B_X)^{-1}\\
&=  (\ev_X\circ \id_{{}^*Y\circ Y\circ X}) (\id_X\circ \pi^{(A,C)}_{Y\circ X}).
\end{align*}
The second equality follows from diagram \eqref{definition-diagram-braiding}. On the other hand, the left hand side of \eqref{invariance-eq2} is equal to
\begin{align*} &= (\ev_X\circ \id_{{}^*Y\circ Y\circ X}) (\id_X\circ \gamma^{(A,C)}_{Y\circ X} h_A)\\
&= (\ev_X\circ \id_{{}^*Y\circ Y\circ X}) (\id_X\circ \pi^{(A,C)}_{Y\circ X}).
\end{align*}
The first equality follows from  diagram \eqref{definition-diagram-braiding}, and the second equality follows from \eqref{diagram-invar}.
\epf

In what follows, we will introduce a product for $\adj_B$. For any 0-cell $B\in \Bc^0$ define $m^B: \adj_B\ot \adj_B\to \adj_B, $ $u^B:\uno\to \adj_B$ as the unique morphisms in $Z(\Bc)$ such that 
\begin{align}\label{product-ch-2cat}
\begin{CD}
\ele(A,B)\circ \ele(A,B) @>\quad\quad m^B_A\quad\quad>> \ele(A,B)\\
@V\pi^{(A,B)}_X \circ \pi^{(A,B)}_X VV    @VV\pi^{(A,B)}_X V   \\
{}^*X\circ  X\circ {}^*X\circ X@>>\id_{{}^*X}\circ \ev_X \circ \id_{X}> {}^*X\circ X,
\end{CD}
\end{align}
\begin{equation}\label{product-ch-2cat-unit}
\xymatrix{I_A\ar[rr]^{u^B_A}
\ar[dr]^{coev_X}&&\ele(A,B)
\ar[dl]_{\pi^{(A,B)}_X}\\ & {}^*X \circ X,&}
\end{equation}
are commutative diagrams, for any 0-cell $A$ and any $X\in \Bc(A,B)$. It follows from the universal property of the end $\ele(A,B)$ that both maps $m^B_A, u^B_A$ exist.
\begin{prop} The object $\adj_B $ with product $m^B$ and unit $u^B$  is an algebra in the center $Z(\Bc)$.
\end{prop}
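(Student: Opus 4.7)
The plan is to test each axiom using the universal property of the end: any 2-cell with target $\ele(A,B)$ is determined by its compositions with the dinaturals $\pi^{(A,B)}_X$, so every identity reduces to one between 2-cells with target ${}^*X\circ X$. The only tools needed are the defining diagrams \eqref{product-ch-2cat}, \eqref{product-ch-2cat-unit} and \eqref{definition-diagram-braiding}, the rigidity triangles for 1-cells in $\Bc$, and the interchange law.

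For associativity of $m^B$ I would post-compose $m^B_A(m^B_A\circ \id_{\ele(A,B)})$ and $m^B_A(\id_{\ele(A,B)}\circ m^B_A)$ with $\pi^{(A,B)}_X$. Applying \eqref{product-ch-2cat} twice on each side and sliding identities past the triple horizontal composition $\pi^{(A,B)}_X\circ \pi^{(A,B)}_X\circ \pi^{(A,B)}_X$ by interchange, both sides reduce to the same 2-cell with target ${}^*X\circ X$, consisting of two independent applications of $\ev_X$ to the two interior $(X,{}^*X)$-pairs of ${}^*X\circ X\circ {}^*X\circ X\circ {}^*X\circ X$. Because these two evaluations are at disjoint horizontal positions, they commute by the interchange law and the order of bracketing is irrelevant. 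For the left unit, an analogous computation using \eqref{product-ch-2cat}, \eqref{product-ch-2cat-unit} and interchange yields
\[\pi^{(A,B)}_X\circ m^B_A(u^B_A\circ \id_{\ele(A,B)}) = \bigl((\id_{{}^*X}\circ \ev_X)(\coev_X\circ \id_{{}^*X})\circ \id_X\bigr)(\id_{I_A}\circ \pi^{(A,B)}_X),\]
which collapses to $\pi^{(A,B)}_X$ by the rigidity triangle $(\id_{{}^*X}\circ \ev_X)(\coev_X\circ \id_{{}^*X})=\id_{{}^*X}$. The right unit is symmetric, using the companion rigidity triangle $(\ev_X\circ \id_X)(\id_X\circ \coev_X)=\id_X$.

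Separately, one has to verify that $m^B$ and $u^B$ actually define morphisms in $Z(\Bc)$: the universal property of $\ele$ only produces $m^B_A$, $u^B_A$ at the underlying level of $\Bc(A,A)$, while the centrality condition \eqref{monoidal-cent0} relates different 0-cells and has to be checked by hand. For $m^B$ this amounts to
\[\sigma^B_X(m^B_A\circ \id_X) = (\id_X\circ m^B_C)(\sigma^B_X\circ \id_{\ele(A,B)})(\id_{\ele(A,B)}\circ \sigma^B_X),\]
where the right-hand side is the tensor product half-braiding given by \eqref{monoidal-cent1}. Once again both sides are tested by post-composition with $\pi^{(C,B)}_Y$, and unfolding with \eqref{definition-diagram-braiding} and \eqref{product-ch-2cat} produces expressions in the dinaturals $\pi^{(A,B)}_{Y\circ X}$ together with evaluations $\ev_X$ and $\ev_Y$ that match after applying the rigidity zig-zags for $X$. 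The analogous check for $u^B$ is shorter and uses only \eqref{product-ch-2cat-unit} together with \eqref{definition-diagram-braiding}.

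The main obstacle is this last step. Because $\sigma^B_X$ was defined only indirectly via its inverse $\overline{\sigma}^B_X$ through \eqref{prop-inv-sigma}, and because the tensor product half-braiding on $\adj_B\otimes \adj_B$ involves two instances of $\sigma^B_X$, the bookkeeping of the associated horizontal compositions becomes intricate; in practice I would work with $\overline{\sigma}^B_X$ throughout and only convert back to $\sigma^B_X$ at the end using the formula $\sigma^B_X=(\id\circ \ev_X)(\id_X\circ \overline{\sigma}^B_X\circ \id_X)(\coev_X\circ \id)$ recorded in the proof of Proposition \ref{half-braidig-2cat}. No conceptual input beyond the universal property of the end, the rigidity of $\Bc$ and the interchange law is required.
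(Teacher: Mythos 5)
Your proposal follows the paper's proof essentially verbatim for the two axioms it actually verifies: both unit laws and associativity are tested against the dinaturals $\pi^{(A,B)}_X$ via the universal property of the end, reduced through \eqref{product-ch-2cat} and \eqref{product-ch-2cat-unit}, and closed out with exactly the rigidity triangles and interchange argument you describe. The one point where you go beyond the paper is the verification that $m^B$ and $u^B$ satisfy the centrality condition \eqref{monoidal-cent0}; the paper folds this into the phrase ``the unique morphisms in $Z(\Bc)$'' and never checks it, so your insistence on testing it against $\pi^{(C,B)}_Y$ using \eqref{definition-diagram-braiding} (working with $\overline{\sigma}^B_X$ via \eqref{prop-inv-sigma} and the factorization $\ev_{Y\circ X}=\ev_Y(\id_Y\circ\ev_X\circ\id_{{}^*Y})$) is a legitimate and welcome addition rather than a deviation.
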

\pf We must show that
\begin{equation}\label{product11} m^B (u^B \ot \id)= \id=  m^B (id\ot u^B),
\end{equation}
\begin{equation}\label{product12}  m^B ( m^B\ot \id)= m^B (\id\ot  m^B).
\end{equation}
For any $A\in \Bc^0$ and any $X\in \Bc(A,B)$ we have that 
\begin{align*}  \pi^{(A,B)}_X m^B_A(u^B_A \circ \id)&= (\id_{{}^*X}\circ \ev_X \circ \id_{X}) (\pi^{(A,B)}_X \circ \pi^{(A,B)}_X) (u^B_A \circ \id)\\
&= (\id_{{}^*X}\circ \ev_X \circ \id_{X}) (coev_X\circ \pi^{(A,B)}_X)\\
&=\pi^{(A,B)}_X.
\end{align*}
The first equality follows from \eqref{product-ch-2cat}, the second one follows from \eqref{product-ch-2cat-unit}. The last equality is the rigidity axiom.  Hence \eqref{product11} follows from the universal property of the end. To prove \eqref{product12} it is enough to show that for any $A\in \Bc^0$ and any $X\in \Bc(A,B)$
$$  \pi^{(A,B)}_X m^B_A ( m^B_A\circ \id)= \pi^{(A,B)}_X m^B_A (\id\circ  m^B_A).$$
Using \eqref{product11}
\begin{align*}& \pi^{(A,B)}_X m^B_A ( m^B_A\circ \id)=  (\id_{{}^*X}\circ \ev_X \circ \id_{X}) (\pi^{(A,B)}_X m^B_A\circ \pi^{(A,B)}_X)\\
&=  (\id_{{}^*X}\circ \ev_X \circ \id_{X}) (\id_{{}^*X}\circ \ev_X \circ \id_{X}\circ\id_{{}^*X\circ X}) ( \pi^{(A,B)}_X\circ  \pi^{(A,B)}_X\circ  \pi^{(A,B)}_X).
\end{align*} 
On the other hand
\begin{align*}& \pi^{(A,B)}_X m^B_A (\id\circ  m^B_A)= (\id_{{}^*X}\circ \ev_X \circ \id_{X}) (\pi^{(A,B)}_X\circ \pi^{(A,B)}_X m^B_A)\\
&= (\id_{{}^*X}\circ \ev_X \circ \id_{X}) (\id_{{}^*X\circ X} \circ \id_{{}^*X}\circ \ev_X \circ \id_{X})  ( \pi^{(A,B)}_X\circ  \pi^{(A,B)}_X\circ  \pi^{(A,B)}_X).
\end{align*} 
Since both are equal, we get the result.
\epf

\begin{defi} For any finite 2-category $\Bc$ and any 0-cell $B$ of $\Bc$,  $\adj_B$ is the \textit{adjoint algebra} of $B$.
\end{defi}

\begin{lema}\label{iso-adj-0cell} Assume that $B, C\in \Bc^0$ are equivalent 0-cells. Then, the adjoint algebras $\adj_B, \adj_C$ are isomorphic as algebras in the center $Z(\Bc)$.
\end{lema}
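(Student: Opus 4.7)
Fix an equivalence $E\in\Bc(B,C)$ with quasi-inverse $F\in\Bc(C,B)$ and isomorphism 2-cells $\alpha:E\circ F\to I_C$ and $\beta:F\circ E\to I_B$. By Lemma \ref{dual-of-composition}(iii) applied to $E$, there is a canonical identification ${}^*F\simeq E$ (and ${}^*E\simeq F$), so that ${}^*F\circ F$ is identified with $E\circ F$ and hence with $I_C$ via a specific isomorphism $\lambda:{}^*F\circ F\to I_C$ built from $\alpha,\beta$ and the rigidity 2-cells.

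The plan is to exhibit, for each 0-cell $A\in\Bc^0$, an isomorphism $h_A:\ele(A,B)\to\ele(A,C)$ and assemble them into an isomorphism $h:\adj_B\to\adj_C$ in $Z(\Bc)$ that preserves the algebra structure. Post-composition with $F$ gives an equivalence of categories $G_F:\Bc(A,C)\to\Bc(A,B)$, $Y\mapsto F\circ Y$, with quasi-inverse given by post-composition with $E$. For any $Y\in\Bc(A,C)$, there is a natural isomorphism
\[
\varphi_Y:{}^*(F\circ Y)\circ(F\circ Y)\;\simeq\;{}^*Y\circ {}^*F\circ F\circ Y\xrightarrow{\id_{{}^*Y}\circ\lambda\circ\id_Y}{}^*Y\circ I_C\circ Y\simeq{}^*Y\circ Y,
\]
using the isomorphism ${}^*(F\circ Y)\simeq {}^*Y\circ {}^*F$ from Lemma \ref{dual-of-composition}(ii). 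The family $\xi_Y:=\varphi_Y\circ\pi^{(A,B)}_{F\circ Y}$ is a dinatural transformation from $\ele(A,B)$ to the functor $Y\mapsto{}^*Y\circ Y$ on $\Bc(A,C)$; dinaturality follows from the dinaturality of $\pi^{(A,B)}$ and the naturality of $\varphi$. By the universal property of $\ele(A,C)$, there is a unique 2-cell $h_A:\ele(A,B)\to\ele(A,C)$ with $\pi^{(A,C)}_Y\circ h_A=\xi_Y$. Exchanging the roles of $E$ and $F$ (i.e.\ using pre-composition by $E$ and the isomorphism $\mu:{}^*E\circ E\to I_B$) produces $k_A:\ele(A,C)\to\ele(A,B)$, and a short diagram chase combining Proposition \ref{properties-end}(ii)--(iii) with the rigidity axioms of Lemma \ref{dual-of-composition}(iii) shows that $h_A\circ k_A$ and $k_A\circ h_A$ factor through the identity on the relevant ends, so each $h_A$ is an isomorphism.

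Next I would check that $h=(h_A)_A$ is a morphism in $Z(\Bc)$, i.e.\ that $(\id_X\circ h_A)\sigma^B_X=\sigma^C_X(h_{A'}\circ\id_X)$ for every $X\in\Bc(A,A')$. Using the universal property of $\ele(A',C)$ this reduces, for each $Y\in\Bc(A',C)$, to comparing two 2-cells out of $\ele(A,B)\circ X$ after post-composition with $\pi^{(A',C)}_Y\circ\id_X$. Applying the defining diagram \eqref{definition-diagram-braiding} for both $\sigma^B_X$ (at the 1-cell $F\circ Y\circ X\in\Bc(A,B)$) and $\sigma^C_X$ (at $Y\in\Bc(A',C)$), the equality then boils down to the naturality of $\varphi$ and the fact that $\ev_X$ is the same morphism appearing in both sides; this is a routine but lengthy chase. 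Finally, that $h$ intertwines $m^B,m^C$ and $u^B,u^C$ is verified by post-composing with each $\pi^{(A,C)}_Y$ and using \eqref{product-ch-2cat} and \eqref{product-ch-2cat-unit}: the product side rewrites as $(\id_{{}^*Y}\circ\ev_Y\circ\id_Y)$ acting on $\pi^{(A,C)}_Y\circ\pi^{(A,C)}_Y\circ(h_A\circ h_A)$, while the $\ele(A,B)$ side produces $(\id\circ\ev_{F\circ Y}\circ\id)$ on $\pi^{(A,B)}_{F\circ Y}\circ\pi^{(A,B)}_{F\circ Y}$; these match after inserting $\varphi_Y$ because the formula for $\ev_{F\circ Y}$ in terms of $\ev_F,\ev_Y$ (Lemma \ref{dual-of-composition}(ii)) telescopes through $\lambda$ and the rigidity identities of Lemma \ref{dual-of-composition}(iii) to reproduce $\ev_Y$. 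The unit compatibility reduces analogously to $\pi^{(A,C)}_Y\circ h_A\circ u^B_A=\coev_Y$, which follows directly from \eqref{product-ch-2cat-unit} and the same rigidity identities.

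The main obstacle is the verification of compatibility with the half-braiding and the product. Both reductions invoke the same rigidity-telescoping identity: the composite of $\varphi_Y$ with the evaluation of $F\circ Y$ equals the evaluation of $Y$ after absorbing $\lambda$ through the coevaluation of $F$. I would isolate this identity as a small preliminary lemma phrased purely in terms of the rigidity data of an equivalence 1-cell, after which the two compatibility diagrams close formally.
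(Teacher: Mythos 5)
Your proposal is correct and takes essentially the same route as the paper's proof: both construct the comparison 2-cells between $\ele(A,B)$ and $\ele(A,C)$ by pre-composing 1-cells with the quasi-inverse of the equivalence, identify the relevant dual via Lemma \ref{dual-of-composition}(iii), and obtain the maps from the universal property of the ends. The compatibility checks with the half-braiding, product and unit are likewise reduced, exactly as in the paper, to post-composition with the dinatural transformations and the rigidity-telescoping identities that the paper isolates as \eqref{dual-beta} and \eqref{dual-alpha}.
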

\pf Since $B$ and $C$ are equivalent 0-cells, there exist  1- cells $X \in \Bc (B,C)$ and  $Y \in \Bc (C,B)$ with isomorphisms $\alpha : X \circ Y \rightarrow I_C$ and $\beta : Y \circ X \rightarrow I_B$. Using Lemma \ref{dual-of-composition} (iii) we get that $Y={}^*X$ with evaluation and coevaluation given by
$$\coev_X=\beta^{-1},\,\, \ev_X= \alpha (\id_X\circ \beta\circ \id_Y)(\id_{X\circ Y}\circ \alpha^{-1}). $$
Also $X={}^*Y$ with evaluation and coevaluation given by
$$\coev_Y=\alpha^{-1}, \,\, \ev_Y=\beta (\id_Y \circ \alpha \circ \id_X)(\id_{Y \circ X} \circ \beta^{-1}).$$
Using remark \ref{interchange}, one can easily verify  that
\begin{equation}\label{dual-beta}
    {}^*(\beta^{-1})\circ \id_{Y\circ X}=\id_Y\circ \alpha\circ \id_X,
\end{equation}
\begin{equation}\label{dual-alpha} \id_{X\circ Y}\circ \alpha= \id_X\circ \ev_Y\circ \id_Y.
\end{equation}
For any 0-cell $A$  let 
$$\pi^{(A,B)}_W : \ele (A,B) \xrightarrow{..} {}^*W \circ W, \quad W\in \Bc(A,B),$$
and 
$$\xi^{(A,C)}_Z : \ele (A,C) \xrightarrow{..} {}^*Z \circ Z, \quad Z\in \Bc(A,C)$$
be the associated dinatural transformations to $\ele (A,B)$ and  $\ele (A,C)$ respectively.
The dinaturality of $\pi$ implies, that for any $W\in \Bc(A,B)$
\begin{align}\label{dinat-beta1}\begin{split} (\id_{ {}^*W} \circ \beta^{-1} \circ \id_W) \pi^{(A,B)}_W &= (\id_{{}^*W} \circ {}^*(\beta^{-1}) \circ \id_{Y \circ X \circ W}) \pi^{(A,B)}_{Y \circ X \circ W}\\
&=(\id_{{}^*W\circ Y} \circ \alpha \circ \id_{X \circ W}) \pi^{(A,B)}_{Y \circ X \circ W}.
\end{split}
\end{align}
Here, we have used \eqref{dual-beta}.
For any $W \in \Bc (A,B)$  define 
$$\delta_W : \ele (A,C) \rightarrow {}^*W \circ W$$ $$\delta_W = (\id_{ {}^*W} \circ \beta \circ \id_W) \xi^{(A,C)}_{X \circ W}. $$
For any $Z \in \Bc (A,C)$  define 
$$\gamma_Z : \ele (A,B) \rightarrow {}^*Z \circ Z$$ $$
\gamma_Z = (\id_{{}^*Z} \circ \alpha \circ \id_Z) \pi^{(A,B)}_{Y \circ Z}.$$
It follows by a straightforward computation, that $\delta$ and $\gamma$ are dinatural transformations. 
By the universal property of the end, there exist maps
$$ g_A : \ele (A,C) \rightarrow \ele (A,B),$$
$$f_A : \ele (A,B) \rightarrow \ele (A,C), $$
such that for any $W \in \Bc (A,B)$, $Z \in \Bc (A,C)$
\begin{align}\label{definitio-f-g}\pi^{(A,B)}_W g_A = \delta_W, \quad
    \xi^{(A,C)}_Z f_A = \gamma_Z.
\end{align}
Let us show that $f_A$ is the inverse of $g_A$. For any $W \in \Bc (A,B)$ we have that
\begin{align*}\pi^{(A,B)}_W g_A f_A & = \delta_W f_A\\
& = (\id_{ {}^*W} \circ \beta \circ \id_W) \xi^{(A,C)}_{X \circ W} f_A \\
& =  (\id_{ {}^*W} \circ \beta \circ \id_W) \gamma_{X \circ W} \\
& =  (\id_{ {}^*W} \circ \beta \circ \id_W)(\id_{{}^*(X \circ W)} \circ \alpha \circ \id_{X \circ W}) \pi^{(A,B)}_{Y \circ X \circ W}\\
& =  \pi^{(A,B)}_W 
\end{align*}
In the last equation we have used \eqref{dinat-beta1}. This proves that $g_A f_A=\id$. Analogously, one can prove that $f_A g_A=\id.$
The collection $(f_A)$ defines an isomorphism
$$f:  \adj_B \rightarrow \adj_C$$
Let us show that $f$ is indeed a morphism in $Z (\Bc)$. For this, we must prove that for any $W \in \Bc (A,E)$ 
$$ (\id_W \circ f_A) \sigma^B_W = \sigma^C_W (f_E \circ \id_W).$$
To prove this, it will be enough to see that 
\begin{equation}\label{equation-tech1}
    (\xi^{(E,D)}_T \circ \id_W)(\sigma^C_W)^{-1} (\id_W \circ f_A) \sigma^B_W = (\xi^{(E,D)}_T f_E \circ \id_W)
\end{equation}
for any $T \in \Bc (E,D)$. Using the definition of $f$, the right hand side of \eqref{equation-tech1} is equal to
$\gamma^{(E,D)}_T\circ \id_W.$ Using the definition of $\sigma^C$, the left hand side of \eqref{equation-tech1} is equal to
\begin{align*} &(\ev_W\circ \id_{{}^*T\circ T\circ W})(\id_W\circ\xi^{(A,D)}_{T\circ W})(\id_W\circ f_A) \sigma^B_W\\
&=(\ev_W\circ \id_{{}^*T\circ T\circ W}) (\id_W\circ \gamma_{T\circ W})\sigma^B_W= \gamma_T\circ \id_W.
\end{align*}
The first equality follows from the definition of $f$, and the second one follows from the definition of $\sigma^B$. Let us prove now that $f:  \adj_B \rightarrow \adj_C$ is an algebra morphism. We need to show that
$$f_A m^B_A=m^C_A (f_A\circ f_A), $$
for any 0-cell $A.$ Here $m^B$ is the multiplication of $\adj_B$. For this, it is enough to prove that
\begin{equation}\label{f-is-alg} \xi^{(A,C)}_Z f_A m^B_A=\xi^{(A,C)}_Z  m^C_A (f_A\circ f_A),
\end{equation}
for any 1-cell $Z\in \Bc(A,C).$ Using the definition of $f$, the left hand side of \eqref{f-is-alg} is equal to
\begin{align*} &=\gamma_Z m^B_A=(\id_{{}^*Z}\circ \alpha\circ \id_Z)\pi^{(A,B)}_Z m^B_Z \\
&= (\id_{{}^*Z}\circ \alpha\circ \id_Z) (\id_{{}^*Z\circ X}\circ \ev_{Y\circ Z}\circ \id_{Y\circ Z})(\pi^{(A,B)}_{Y\circ Z}\circ \pi^{(A,B)}_{Y\circ Z}) \\
&= (\id_{{}^*Z}\circ \alpha (\id_{X}\circ \ev_Y\circ \id_Y )\circ \id_Z) (\id_{{}^*Z\circ X\circ Y} \circ \ev_Z\circ\id_{X\circ Y\circ Z})\\
&(\pi^{(A,B)}_{Y\circ Z}\circ\pi^{(A,B)}_{Y\circ Z})
\end{align*}
The second equality follows from the definition of $\gamma,$ and the third equality follows from the definition of the product $m^B$
\eqref{product-ch-2cat}, and the last one follows from the formula for $\ev_{Y\circ Z}$. The right hand side of \eqref{f-is-alg} is equal to
\begin{align*} &=(\id_{{}^*Z}\circ \ev_Z\circ \id_Z)(\xi_Z\circ  \xi_Z)(f_A\circ f_A)\\
&=(\id_{{}^*Z}\circ \ev_Z\circ \id_Z)(\id_{{}^*Z}\circ \alpha\circ \id_{Z\circ {}^*Z}\circ \alpha\circ \id_Z) (\pi^{(A,B)}_{Y\circ Z}\circ\pi^{(A,B)}_{Y\circ Z})\\
&=(\id_{{}^*Z}\circ \alpha\circ  \alpha\circ \id_Z) (\id_{{}^*Z\circ X\circ Y} \circ \ev_Z\circ\id_{X\circ Y\circ Z}) (\pi^{(A,B)}_{Y\circ Z}\circ\pi^{(A,B)}_{Y\circ Z})
\end{align*}
The first equality follows from the definition given in \eqref{product-ch-2cat} of $m^C$, and the second equality follows from the definition of $f_A$. Now, that both sides of \eqref{f-is-alg} are equal is a consequence of \eqref{dual-alpha}.
\epf

At this point, we have to verify that our definition of the adjoint algebra of the 2-category ${}_\ca\text{Mod}$ coincides with the definition presented by Shimizu in \cite{Sh2}. This is one of the main results of this work and it is stated in the next result. Recall from Section \ref{SubSection:character algebra} the definition, due to Shimizu,  of the character algebra $\cha_\Mo\in Z(\ca)$ associated to any exact $\ca$-module category.

\begin{teo}\label{adjoint2cat=adjointS} Let $\ca$ be a finite tensor category and $\Mo$ be an exact indecomposable left $\ca$-module category. Let  $\Phi: Z(\camod)\to Z(\ca)$ be the equivalence presented in Theorem \ref{equivalence-centers}. Then $\Phi(\adj_\Mo)\simeq \cha_\Mo$ as algebra objects in $Z(\ca)$.
\end{teo}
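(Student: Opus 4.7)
The plan is to reduce the 2-categorical construction $\adj_\Mo$ to the end over $\Mo$ by means of the $\ca$-module equivalence $\Mo\xrightarrow{\simeq}\Fun_\ca(\ca,\Mo)$, $M\mapsto R_M$, and then to check that under this identification the half-braiding and algebra structure coincide with Shimizu's.

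First, I unpack $\Phi(\adj_\Mo)$: by Theorem \ref{equivalence-centers} (the definition of $\Phi$), $\Phi(\adj_\Mo)$ has underlying object $\ele(\ca,\Mo)(\uno)=\bigl(\int_{F\in\Fun_\ca(\ca,\Mo)}{}^*F\circ F\bigr)(\uno)$. Using Proposition \ref{properties-end}(i) to pull the evaluation-at-$\uno$ functor $\mathrm{ev}_\uno:\Fun_\ca(\ca,\Mo)\to\Mo$ through the end (it is left adjoint to $M\mapsto R_M$), combined with Proposition \ref{properties-end}(ii) applied to the equivalence $M\mapsto R_M$, yields an isomorphism
\[
\ele(\ca,\Mo)(\uno)\;\simeq\;\int_{M\in\Mo}\bigl({}^*R_M\circ R_M\bigr)(\uno)\;=\;\int_{M\in\Mo}\uhom(M,M)\;=\;\cha_\Mo,
\]
since ${}^*R_M=R_M^{\ra}=\uhom(M,-)$ and $R_M(\uno)=M$. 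Explicitly, under this isomorphism the 2-categorical dinatural transformation $(\pi^{(\ca,\Mo)}_{R_M})_{\uno}$ corresponds to Shimizu's dinatural $\pi^\Mo_M$. This gives the identification of underlying objects together with matching universal families.

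Next, I match half-braidings. By formula \eqref{half-brading-phi}, the half-braiding of $\Phi(\adj_\Mo)$ at $X\in\ca$ is $\alpha^{\sigma^\Mo}_X=c^{\ele(\ca,\Mo)}_{X,\uno}\bigl((\sigma^\Mo_{R_X})^{-1}\bigr)_\uno$, where $R_X:\ca\to\ca$, $Y\mapsto Y\otimes X$. I compute $(\sigma^\Mo_{R_X})_\uno$ via the defining diagram \eqref{definition-diagram-braiding} applied with $X=R_X$ and $Y=R_M$; the key point is that ${}^*(R_M\circ R_X)={}^*(R_{X\otb M})={}^*R_{X\otb M}=\uhom(X\otb M,-)$, and Lemma \ref{duals-in-modcat} identifies the canonical comparison ${}^*(R_M\circ R_X)\to R_{X^*}\circ{}^*R_M$ with $\mathfrak{b}^1_{X,M,-}$. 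Evaluating \eqref{definition-diagram-braiding} at $\uno$ and unwinding the rigidity zig-zag therefore converts $(\sigma^\Mo_{R_X})^{-1}_\uno$ into the $\mathfrak{b}_{X,M,X\otb M}$-piece of Shimizu's diagram \eqref{half-braidig-ch}, while $c^{\ele(\ca,\Mo)}_{X,\uno}$ composed with $\pi^{(\ca,\Mo)}_{R_M,\uno}$ provides the $\mathfrak{a}_{X,M,M}$-piece (the right-adjoint functor $R_M^{\ra}$ carries exactly the module structure $\mathfrak{a}$, see \eqref{mod-struct-a}). By the universal property of $\cha_\Mo$ it suffices to compare both maps after post-composing with $\id_X\otimes\pi^\Mo_M$, which now reads as the same diagram.

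Finally, I match the algebra structure. The 2-categorical product \eqref{product-ch-2cat}, evaluated at $\uno\in\ca$ with $X=R_M$, involves the counit $\ev_{R_M}$ of the adjunction $(R_M,{}^*R_M)$; this counit is exactly $\ev^\Mo_{M,-}$, and composing the two layers of $\pi^{(\ca,\Mo)}_{R_M,\uno}$ with it reproduces the composition morphism $\comp^\Mo_M=\psi^{\uhom(M,M)\otimes\uhom(M,M)}_{M,M}(\ev^\Mo_{M,M}(\id\otb\ev^\Mo_{M,M}))$ from \eqref{compositionM}. The universal property of $\cha_\Mo$ then forces the image under $\Phi$ of $m^\Mo$ to coincide with Shimizu's product $m_\Mo$ of \eqref{product-cha-M}. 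Similarly, the unit \eqref{product-ch-2cat-unit} evaluated at $\uno$ becomes $\coev_{R_M,\uno}=\coev^\Mo_{\uno,M}$, which is precisely Shimizu's unit.

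The main obstacle is Step 3: several isomorphisms (the pseudofunctorial constraints defining $c^{\ele(\ca,\Mo)}$, the identification of duals under $\mathrm{ev}_\uno$, and the rigidity zig-zags built into \eqref{definition-diagram-braiding}) must be tracked simultaneously, and it is exactly here that Lemma \ref{duals-in-modcat} is indispensable to translate the intrinsic 2-categorical ${}^*$ into the explicit $\mathfrak{b}^1$. The authors themselves flag this as the cumbersome part of establishing the equivalence.
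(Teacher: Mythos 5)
Your proposal follows essentially the same route as the paper's proof: transporting the end along the equivalence $M\mapsto R_M$ and the evaluation-at-$\uno$ functor via Proposition \ref{properties-end}(i)--(ii), invoking Lemma \ref{duals-in-modcat} to identify the intrinsic dual comparison with $\mathfrak{b}^1$, using the module structure of $\pi^{(\ca,\Mo)}_{R_M}$ to produce the $\mathfrak{a}$-piece, and identifying $\ev_{R_M}$, $\coev_{R_M}$ with $\ev^\Mo$, $\coev^\Mo$ to match the product and unit. Though stated as a sketch, it isolates exactly the ingredients and verification steps the paper carries out in detail, so it is correct and not a genuinely different argument.
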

\pf Since $\Phi(\adj_\Mo)=\ele(\ca,\Mo)(\uno)$, we will construct an isomorphism $\phi:\ele(\ca,\Mo)(\uno)\to  \cha_\Mo$ and prove that it is an algebra morphism in $Z(\ca)$.

Using Lemma \ref{iso-adj-0cell}, since any module category is equivalent to a strict one, we can assume that $\Mo$ is strict.  Recall from Section 
\ref{subsection:internal hom} the $\ca$-module functor $R_M:\ca\to \Mo$, $R_M(X)=X\otb M$, $X\in\ca$, and its right adjoint $R^{ra}_M: \Mo\to \ca$  given by the internal hom $ {}^* R_M=R^{\ra}_M(N)=\uhom(M,N)$, $M,N\in \Mo$.

This induces an equivalence $R:\Mo\to \Fun_\ca(\ca, \Mo)$, $R(M)=R_M$ for any $M\in \Mo$. Its quasi-inverse functor is $H:\Fun_\ca(\ca, \Mo)\to \Mo$, $H(F)=F(\uno)$.  For any module functor $(F,p)\in\Fun_\ca(\ca, \Mo)$,  define the natural isomorphisms
$$ \alpha: R\circ H\to \Id,$$
$$( \alpha_F)_X=(p_{X,\uno})^{-1}, $$
for any $X\in \ca$. Since $\Mo$ is strict, for any $M\in \Mo$, the functor $R_M$ has module structure given by the identity. In particular  
\begin{equation}\label{about-alpha-iso} \alpha_{R_M}=\id.
\end{equation}

We will denote by $\pi^{(\No, \Mo)}_F: \int_{F\in \Fun_\ca(\No, \Mo)} {}^* F\circ F \to  {}^* F\circ F $ the dinatural transformations of the end $\ele(\No, \Mo)$. We also consider the dinatural transformations 
$$ \eta_M: \int_{M\in \Mo}  {}^* R_M \circ R_M \to  {}^* R_M \circ R_M.$$

Using Proposition \ref{properties-end} (ii), there exists an isomorphism $$h:\int_{F\in \Fun_\ca(\No, \Mo)} {}^* F\circ F  \to  \int_{M\in \Mo}  {}^* R_M \circ R_M$$
such that the following diagram commutes
\begin{equation}\label{diagram-h}
\xymatrix@C=60pt@R=16pt{
	\int_{F\in \Fun_\ca(\ca, \Mo)} {}^* F\circ F 
	\ar[r]^{h}
	\ar[d]_{\pi^{(\ca, \Mo)}_F}
	& \int_{M\in \Mo}  {}^* R_M \circ R_M \ar[d]^{\eta_{H(F)}}\\
	{}^* F\circ F
	& {}^*R_{F(\uno)}\circ R_{F(\uno)} \ar[l]^{{}^* (( \alpha_F)^{-1})\circ \alpha_F}.
}
\end{equation}
Taking $F=R_M$, for any $M\in \Mo$, and using \eqref{diagram-h}, \eqref{about-alpha-iso} we get that
\begin{equation}\label{diagram-h-caseM}
\eta_Mh= \pi^{(\ca, \Mo)}_{R_M}.
\end{equation}
Define the functor $E:\End_\ca(\ca)\to \ca$, $E(F)=F(\uno)$. This functor is an equivalence of categories. Since for any $M\in \Mo$ we have that $E({}^* R_M \circ R_M)=\uhom(M,M)$, Proposition \ref{properties-end} (i) implies that there exists an isomorphism 
$$\widetilde{h}:  \int_{M\in \Mo} \uhom(M,M)\to E\big(  \int_{M\in \Mo}  {}^* R_M \circ R_M\big),$$
such that
\begin{equation}\label{tilde-h} E(\eta_M) \widetilde{h}= \pi^{\Mo}_M.
\end{equation}
Recall from Section \ref{SubSection:character algebra} the definition of the dinatural transformations $\pi^{\Mo}$.

Define $\phi:\ele(\ca,\Mo)(\uno)=\big(\int_{F\in \Fun_\ca(\ca, \Mo)} {}^* F\circ F\big)(\uno)\to  \int_{M\in \Mo} \uhom(M,M),$ as $\phi= (\widetilde{h})^{-1} E(h)$. Using \eqref{diagram-h-caseM} and \eqref{tilde-h} we get that
for any $M\in \Mo$
\begin{equation}\label{phi-on-M} \pi^\Mo_M\phi=E(\pi^{(\ca,\Mo)}_{R_M})=(\pi^{(\ca,\Mo)}_{R_M})_\uno.
\end{equation}
Let us prove that $\phi$ is a morphism in the center $Z(\ca)$. We need to show that for any $X\in\ca$
\begin{align}\label{phi-in-center} \sigma^{\Mo}_X(\phi\ot\id_X)= (\id_X\ot \phi) \alpha^{\sigma}_X.
\end{align}
Here $ \sigma^{\Mo}$ is the half-braiding of the algebra $\cha_\Mo$, as defined in Section \ref{SubSection:character algebra}. Recall from the proof of Theorem \ref{equivalence-centers} that 
$$\Phi( (\ele(\No,\Mo)_\No), \sigma)=(\ele(\ca,\Mo)(\uno), \alpha^\sigma)\in Z(\ca),$$
where $\alpha^\sigma $ is the half-braiding of $\ele(\ca,\Mo)(\uno)$, and it is defined by equation \eqref{half-brading-phi}, which in this case is
$$\alpha^{\sigma}_X: \ele(\ca,\Mo)(\uno)\ot X\to X\ot \ele(\ca,\Mo)(\uno),$$ 
$$ \alpha^{\sigma}_X=  c^{\ele(\ca,\Mo)}_{X,\uno}(\sigma_{R_X})^{-1}_\uno,$$
where $ c^{\ele(\ca,\Mo)}$ is the module structure of the module functor $\ele(\ca,\Mo)$, and $\sigma$ is the half-braiding in $Z(\camod)$ of the object $\ele(\No,\Mo)_\No$.

Using the universal property of the end, equation \eqref{phi-in-center} is equivalent to
\begin{align}\label{phi-in-center1} (\id_X\ot \pi^\Mo_M)\sigma^{\Mo}_X(\phi\ot\id_X)=(\id_X\ot \pi^\Mo_M) (\id_X\ot \phi) \alpha^{\sigma}_X,
\end{align}
for any $X\in \ca, M\in \Mo$. Using \eqref{half-braidig-ch}, one gets that the left hand side of  \eqref{phi-in-center1} is equal to
\begin{align*}&= \mathfrak{a}_{X,M,M} \mathfrak{b}_{X, M, X \otb M} (\pi^\Mo_{X \otb M} \otimes \id_X)(\phi\ot\id_X)\\
&=\mathfrak{a}_{X,M,M} \mathfrak{b}_{X, M, X \otb M} (  (\pi^{(\ca,\Mo)}_{R_{X\otb M}})_\uno\otimes \id_X).
\end{align*}

\begin{claim}For any $X\in \Mo$, $M\in \Mo$ the following equations hold.
\begin{equation}\label{claim-thad-1}  \mathfrak{a}_{X,M,M}(\pi^{(\ca,\Mo)}_{R_M})_X=\big( \id_X\ot (\pi^{(\ca,\Mo)}_{R_M})_\uno\big) c^{\ele(\ca,\Mo)}_{X,\uno},
\end{equation}
\end{claim}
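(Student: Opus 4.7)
The plan is to recognize the claimed identity as the standard module-functoriality axiom \eqref{modfunctor3} applied to the dinatural transformation $\pi^{(\ca,\Mo)}_{R_M}$ at the specific pair $(X,\uno)$. The end $\ele(\ca,\Mo)=\int_{F\in\Fun_\ca(\ca,\Mo)}{}^*F\circ F$ is computed in the category $\End_\ca(\ca)$ of $\ca$-module endofunctors of $\ca$, so for each $F\in\Fun_\ca(\ca,\Mo)$ the dinatural component $\pi^{(\ca,\Mo)}_F:\ele(\ca,\Mo)\to {}^*F\circ F$ is a morphism of $\ca$-module functors. Taking $F=R_M$, this means $\pi^{(\ca,\Mo)}_{R_M}$ satisfies the compatibility
\[
d_{X,Y}\,(\pi^{(\ca,\Mo)}_{R_M})_{X\ot Y} = (\id_X\ot (\pi^{(\ca,\Mo)}_{R_M})_Y)\, c^{\ele(\ca,\Mo)}_{X,Y},
\]
where $d$ is the module structure of ${}^*R_M\circ R_M$ and $c^{\ele(\ca,\Mo)}$ is the module structure of $\ele(\ca,\Mo)$.

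Next, I would identify the module structure $d$ of the composite ${}^*R_M\circ R_M$. By \eqref{modfunctor-comp} it equals $d_{X,Y}=\mathfrak{a}_{X,M,R_M(Y)}\,\circ\,{}^*R_M(c^{R_M}_{X,Y})$, where $c^{R_M}$ is the module structure of $R_M$. Since we have reduced via Lemma \ref{iso-adj-0cell} to the case that $\Mo$ is strict, the associativity constraint $m_{X,Y,M}$ is the identity, so $c^{R_M}_{X,Y}=\id$ and hence $d_{X,Y}=\mathfrak{a}_{X,M,Y\otb M}$. Specializing to $Y=\uno$ and using $\uno\otb M=M$, this becomes $d_{X,\uno}=\mathfrak{a}_{X,M,M}$.

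Plugging $Y=\uno$ into the compatibility equation and using $X\ot\uno=X$ in the strict setting (so that $(\pi^{(\ca,\Mo)}_{R_M})_{X\ot\uno}=(\pi^{(\ca,\Mo)}_{R_M})_X$) yields exactly
\[
\mathfrak{a}_{X,M,M}\,(\pi^{(\ca,\Mo)}_{R_M})_X = \bigl(\id_X\ot(\pi^{(\ca,\Mo)}_{R_M})_\uno\bigr)\,c^{\ele(\ca,\Mo)}_{X,\uno},
\]
as required. The only non-trivial step is identifying the module structure of the composite, which is handled by \eqref{modfunctor-comp} and strictness; no further dinaturality argument is needed because the end was taken in the 2-category of $\ca$-module functors to begin with.
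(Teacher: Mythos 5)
Your proposal is correct and follows essentially the same route as the paper: recognize that $\pi^{(\ca,\Mo)}_{R_M}$ is a module natural transformation satisfying \eqref{modfunctor3}, identify the module structure of the composite ${}^*R_M\circ R_M$ via \eqref{modfunctor-comp} as $d_{X,Y}=\mathfrak{a}_{X,M,Y\otb M}$ (using strictness of $\Mo$ so that $c^{R_M}$ is trivial), and specialize to $Y=\uno$. The only difference is that you spell out the strictness step $c^{R_M}_{X,Y}=\id$ explicitly, which the paper had already recorded earlier in the proof of Theorem \ref{adjoint2cat=adjointS}.
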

\begin{proof}[Proof of Claim]  The functor ${}^* R_M \circ R_M:\ca\to \ca $ is a $\ca$-module functor. Since both ${}^* R_M, R_M$ are module functors, the  module structure of the composition is, according to \eqref{modfunctor-comp}, given by $d_{X,Y}:{}^* R_M \circ R_M(X\ot Y)\to X\ot R_M \circ R_M( Y)$, where
$$ d_{X,Y}= \mathfrak{a}_{X,M,Y\otb M}.$$
The  natural transformation 
$  \pi^{(\ca,\Mo)}_{R_M}: \int_{F\in \Fun_\ca(\No, \Mo)} {}^* F\circ F  \to {}^* R_M \circ R_M $ is a natural module transformation, this means that it satisfies \eqref{modfunctor3}, which in this case is
$$d_{X,Y}  \big(\pi^{(\ca,\Mo)}_{R_M}  \big)_{X\ot Y} =
 \big(\id_X\ot  (\pi^{(\ca,\Mo)}_{R_M})_Y \big)c^{\ele(\ca,\Mo)}_{X,Y}.$$
Taking $Y=\uno$ we obtain \eqref{claim-thad-1}.
\end{proof}
Using \eqref{phi-on-M}, the right hand side of  \eqref{phi-in-center1} is equal to
\begin{align*}&= ( \id_X\ot (\pi^{(\ca,\Mo)}_{R_M})_\uno )c^{\ele(\ca,\Mo)}_{X,\uno}(\sigma_{R_X})^{-1}_\uno\\
&= \mathfrak{a}_{X,M,M}(\pi^{(\ca,\Mo)}_{R_M})_X(\sigma_{R_X})^{-1}_\uno
\end{align*}
The second equality follows from \eqref{claim-thad-1}. Since $\mathfrak{a}_{X,M,M}$ is an isomorphism, equation \eqref{phi-in-center1} is equivalent to
\begin{equation}\label{equivalent-eq1} \mathfrak{b}_{X, M, X \otb M} (  (\pi^{(\ca,\Mo)}_{R_{X\otb M}})_\uno\otimes \id_X)(\sigma_{R_X})_\uno =(\pi^{(\ca,\Mo)}_{R_M})_X
\end{equation}
Recall that the half-braiding $\sigma_{R_X}$ is defined using diagram  \ref{definition-diagram-braiding}. In this particular case, this diagram is

\begin{equation}\label{definition-diagram-braiding2}
\xymatrix@C=80pt@R=16pt{
	\ele(\ca,\Mo)\circ R_X 
	\ar[r]^{\pi^{(\ca,\Mo)}_{R_M} \circ \id_{R_X}}
	\ar[d]_{\sigma_{R_X}}
	& {}^*R_M\circ R_M\circ R_X\\
	R_X\circ \ele(\ca,\Mo) \ar[r]_{\id_{R_X}\circ \pi^{(\ca,\Mo)}_{R_M\circ R_X}}
	& R_X\circ {}^* R_X\circ {}^* R_M\circ R_M\circ R_X \ar[u]_{\ev_{R_X}\circ\id_{{}^*R_MR_M R_X}} ,
}
\end{equation}
for any $X\in \ca$, $M\in \Mo$. Recall that, in this diagram the isomorphism ${}^* (R_M\circ R_X)\to  R_{X^*}\circ {}^*R_M $ is omitted. This isomorphism is described in Lemma \ref{duals-in-modcat}. Diagram \eqref{definition-diagram-braiding2} evaluated in $\uno$ implies that
$$(\ev_{R_X})_{\uhom(M,X\otb M)} (\mathfrak{b}^1_{X,M,X\otb M}\ot\id_X)(\pi^{(\ca,\Mo)}_{R_{X\otb M}})_\uno\otimes \id_X)(\sigma_{R_X})_\uno =(\pi^{(\ca,\Mo)}_{R_M})_X.$$
This implies equation \eqref{equivalent-eq1}. Let us prove now that $\phi: \Phi(\adj_\Mo)\to \cha_\Mo $ is an algebra map. Let us denote by 
$$ m^\Mo: \adj_\Mo\ot \adj_\Mo\to \adj_\Mo$$
 and  $m: \cha_\Mo \ot \cha_\Mo \to \cha_\Mo$ the corresponding multiplication morphisms. The product of $\Phi(\adj_\Mo)$ is given by 
 \begin{equation*}
 \xymatrix{\Phi(\adj_\Mo)\ot \Phi(\adj_\Mo)\ar[rr]^{ \zeta^\Phi}
\ar[dr]^{}&& \Phi(\adj_\Mo\ot \adj_\Mo)
\ar[dl]_{\Phi(m^\Mo)}\\ &\Phi(\adj_\Mo )&}.
\end{equation*}
Recall from the proof of Theorem \ref{equivalence-centers} the definition of $\zeta^\Phi$. The map $\phi$ is an algebra morphism if and only if
\begin{equation}\label{phi-algebra-map0} m (\phi\ot \phi)=\phi\, \Phi(m^\Mo) \big(c^{\ele(\ca,\Mo)}_{\ele(\ca,\Mo)(\uno),\uno}\big)^{-1}.
\end{equation}
Let $M\in \Mo$. Applying $\pi^\Mo_M$ to the left hand side of \eqref{phi-algebra-map0} one gets 
\begin{align*} \pi^\Mo_M  m (\phi\ot \phi)&= \comp^\Mo_{M}\circ (\pi^\Mo_M\ot \pi^\Mo_M)(\phi\ot \phi)\\
&= \comp^\Mo_{M} ((\pi^{(\ca,\Mo)}_{R_M})_\uno\ot (\pi^{(\ca,\Mo)}_{R_M})_\uno).
\end{align*}
The second equality follows from \eqref{phi-on-M}. Now, applying $\pi^\Mo_M$ to the right hand side of \eqref{phi-algebra-map0} one gets 
\begin{align*} \pi^\Mo_M\phi\, \Phi(m^\Mo) \big(c^{\ele(\ca,\Mo)}_{\ele(\ca,\Mo)(\uno),\uno}\big)^{-1}&= (\pi^{(\ca,\Mo)}_{R_M})_\uno (m^\Mo_\ca)_\uno  \big(c^{\ele(\ca,\Mo)}_{\ele(\ca,\Mo)(\uno),\uno}\big)^{-1}
\end{align*}
\epf

As a direct consequence of Lemma \ref{iso-adj-0cell} and Theorem \ref{adjoint2cat=adjointS}, we have the following result.

\begin{cor} Assume that $\Mo$ and $\No$ are equivalent exact $\ca$-module categories. Then, the algebras $\cha_\Mo, \cha_\No$ are isomorphic.\qed
\end{cor}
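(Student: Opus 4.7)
The plan is to chain the two cited results. Since $\Mo$ and $\No$ are equivalent $\ca$-module categories, they are equivalent as 0-cells in the finite 2-category $\camod_e$: a module equivalence $F\colon \Mo\to \No$ with quasi-inverse $G\colon \No\to \Mo$ gives 1-cells in $\camod_e(\Mo,\No)$ and $\camod_e(\No,\Mo)$ whose compositions are isomorphic to the respective identity 1-cells. Hence the hypothesis of Lemma \ref{iso-adj-0cell} is met.

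Applying Lemma \ref{iso-adj-0cell} to the 2-category $\Bc=\camod_e$ with the equivalent 0-cells $B=\Mo$ and $C=\No$, we obtain an isomorphism of algebras
\[
\adj_\Mo \simeq \adj_\No \quad \text{in } Z(\camod_e).
\]
By Theorem \ref{equivalence-centers}, the functor $\Phi\colon Z(\camod_e)\to Z(\ca)^{\rev}$ is a monoidal equivalence, so it sends isomorphic algebras to isomorphic algebras. Thus
\[
\Phi(\adj_\Mo) \simeq \Phi(\adj_\No)
\]
as algebras in $Z(\ca)^{\rev}$, and therefore also as algebras in $Z(\ca)$ via the monoidal equivalence \eqref{monoidal-center-rev}.

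Finally, invoking Theorem \ref{adjoint2cat=adjointS} twice yields algebra isomorphisms $\Phi(\adj_\Mo)\simeq \cha_\Mo$ and $\Phi(\adj_\No)\simeq \cha_\No$ in $Z(\ca)$. Composing these three isomorphisms gives the desired algebra isomorphism $\cha_\Mo\simeq \cha_\No$. There is no genuine obstacle here; every step is a direct application of a result already established. The only point requiring minimal care is checking that an equivalence of $\ca$-module categories does translate into an equivalence of 0-cells in the sense used by Lemma \ref{iso-adj-0cell}, which follows immediately from unpacking the definitions since module equivalences are exactly invertible 1-cells in $\camod_e$.
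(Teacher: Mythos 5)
Your proof is correct and follows exactly the route the paper intends: the corollary is stated as a direct consequence of Lemma \ref{iso-adj-0cell} (giving $\adj_\Mo\simeq\adj_\No$ as algebras in the center of the 2-category) and Theorem \ref{adjoint2cat=adjointS} (identifying $\Phi(\adj_\Mo)$ with $\cha_\Mo$), transported along the monoidal equivalence $\Phi$. Your added remark that module equivalences are precisely the invertible 1-cells of $\camod_e$, so that the hypothesis of Lemma \ref{iso-adj-0cell} is satisfied, is the only point needing verification and you handle it correctly.
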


\begin{teo}\label{adj-to2-equiva} Assume that $\Bc, \wbc$ are finite 2-categories.  Let $\Fc:\Bc\to \wbc$ be a biequivalence, and let $\widehat{\Fc}: Z(\Bc)\to Z(\wbc)$ be the associated monoidal equivalence given in Proposition \ref{center-2cat-monoid}. For simplicity we shall further assume that $\Fc$ is a 2-functor.  Hence, for any 0-cell $B\in \Bc^0$ there is an isomorphism 
$$\widehat{\Fc}(\adj_B)\simeq  \adj_{\Fc(B)}$$
as algebras  in the category $Z(\wbc)$.
\end{teo}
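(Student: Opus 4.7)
The plan is as follows. Let $\Gc:\wbc\to\Bc$ be a pseudoinverse of $\Fc$, and let $\chi:\Fc\circ \Gc\to \Id_{\wbc}$, $\tau:\Id_{\wbc}\to \Fc\circ \Gc$ be the inverse pseudonatural equivalences used in the proof of Proposition \ref{center-2cat-monoid}, with invertible modification $\omega:\chi\circ \tau\Rightarrow \id$. Since $\Fc$ is a strict 2-functor, it preserves composition and units on the nose, and hence sends every duality datum $(\ev_X,\coev_X)$ in $\Bc$ to a duality datum for $\Fc(X)$ in $\wbc$; in particular ${}^*\Fc(X)=\Fc({}^*X)$ strictly. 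Moreover, by Lemma \ref{dual-of-pseudonat-equi}, ${}^*\tau^0_C=\chi^0_C$, with the corresponding evaluation and coevaluation 2-cells expressible in terms of $\omega$.

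First, I would construct the underlying isomorphism $h_C:\widehat{\Fc}(\adj_B)_C\to \ele(C,\Fc(B))$ at each 0-cell $C\in \wbc^0$. By definition, $\widehat{\Fc}(\adj_B)_C=\chi^0_C\circ \Fc(\ele(\Gc(C),B))\circ \tau^0_C$. Because every hom-functor $\Fc_{A,A'}$ is an equivalence of finite categories, hence admits both adjoints, Proposition \ref{properties-end}(i) together with the strict preservation of duals gives $\Fc(\ele(\Gc(C),B))\simeq \int_{X\in \Bc(\Gc(C),B)}{}^*\Fc(X)\circ \Fc(X)$, and Proposition \ref{properties-end}(ii) applied to $\Fc_{\Gc(C),B}$ identifies this with $\ele(\Fc\Gc(C),\Fc(B))$. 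Next, horizontal composition by $\chi^0_C$ on the left and $\tau^0_C$ on the right is a functor with both adjoints, so Proposition \ref{properties-end}(i) lets it pass through the end, while Proposition \ref{properties-end}(ii) applied to the precomposition equivalence $Y\mapsto Y\circ \chi^0_C$ yields $\chi^0_C\circ \ele(\Fc\Gc(C),\Fc(B))\circ \tau^0_C\simeq \ele(C,\Fc(B))$. Chaining these identifications produces $h_C$, characterized by the property that $\pi^{(C,\Fc(B))}_Y\circ h_C$ agrees with the dinatural family built from $\Fc(\pi^{(\Gc(C),B)}_{\widetilde Y})$ and the structure 2-cells $\chi_Y$, $\tau_Y$, $\omega$, where $\widetilde Y\in\Bc(\Gc(C),B)$ is essentially uniquely determined by $\Fc(\widetilde Y)\simeq Y\circ \chi^0_C$.

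Next, I would verify that $h=(h_C)$ is a morphism in $Z(\wbc)$, that is, it intertwines $\widehat{\Fc}(\sigma^B)$ with $\sigma^{\Fc(B)}$. By the universal property of $\ele(C,\Fc(B))$, this reduces to comparing both half-braidings after composing with the dinatural projections, which unfolds the explicit formula for $\widehat{\Fc}(\sigma^B)$ from Proposition \ref{center-2cat-monoid} into the defining diagram \eqref{definition-diagram-braiding} of $\sigma^{\Fc(B)}$; the needed identities follow from the pseudonaturality squares \eqref{pseudonat-def1}, \eqref{pseudonat-def2} for $\chi$ and $\tau$, the modification axiom for $\omega$, and the rigidity relations for $\chi^0_C$ and $\tau^0_C$ recorded in Lemma \ref{dual-of-composition}(iii). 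That $h$ is an algebra map then follows from \eqref{product-ch-2cat} and \eqref{product-ch-2cat-unit}: applying $\Fc$ to those characterizing diagrams and using the strict preservation of $\ev$, $\coev$ and composition recovers the corresponding diagrams for $m^{\Fc(B)}$ and $u^{\Fc(B)}$ under the identification $h$.

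The main obstacle I anticipate is the half-braiding compatibility: $\widehat{\Fc}(\sigma^B)_Y$ is built in Proposition \ref{center-2cat-monoid} as an involved composite of $\tau_Y^{-1}$, $\Fc(\sigma^B_{\Gc(Y)})$ and $\chi_Y^{-1}$, while $\sigma^{\Fc(B)}_Y$ is characterized only implicitly by the universal property of $\ele(-,\Fc(B))$. Matching them requires unpacking both sides against a common family of dinatural projections, with Lemma \ref{invariance-dinaturalchoice} absorbing any residual invertible 2-cell coming from the choice of representative $\widetilde Y$. Once this matching is done, the algebra compatibility follows formally from the preservation properties of $\Fc$ and the universal property of ends.
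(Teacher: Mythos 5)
Your proposal follows essentially the same route as the paper: identify $\widehat{\Fc}(\adj_B)_C$ with $\ele(C,\Fc(B))$ by pushing the end through an equivalence of hom-categories built from $\Fc$, $\chi^0$ and $\tau^0$ via Proposition \ref{properties-end}, and then check the center- and algebra-morphism conditions against the dinatural projections. The only organizational difference is that the paper first invokes Lemma \ref{iso-adj-0cell} to replace $\adj_B$ by $\adj_{\Gc(\Fc(B))}$ so that the single equivalence $\Hc(X)=\chi^0_{\Fc(B)}\circ\Fc(X)\circ\tau^0_C$ does all the work, whereas you chain several identifications; the 2-cell verifications you defer are precisely the ones the paper carries out, using \eqref{pseudonat-def1}, the modification axiom, and Lemma \ref{dual-of-pseudonat-equi} (note that ${}^*(\tau^0_C)=\chi^0_C$ itself comes from Lemma \ref{dual-of-composition}(iii)), exactly as you anticipate.
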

\pf  Let $\Gc:\wbc \to \Bc $ be the quasi-inverse of $\Fc$. Since 0-cells $B, \Gc(\Fc(B))$ are equivalent, then, by Lemma \ref{iso-adj-0cell} the algebras $\adj_B, \adj_{\Gc(\Fc(B))}$ are isomorphic. We will prove that there is an algebra isomorphism $\widehat{\Fc}(\adj_{\Gc(\Fc(B))})\simeq  \adj_{\Fc(B)}$

 Let $\tau: \Id \to \Fc\circ \Gc$, $\chi: \Fc\circ \Gc\to  \Id$ be a pair of pseudonatural equivalences, one the inverse of the other. Hence $\chi\circ \tau\sim \id_{\Id}$, and $\tau\circ \chi\sim \id_{\Fc\circ \Gc}$.
For any pair of 0-cells $C, D\in \wbc^0,$
$\chi^0_C\in \wbc(\Fc(\Gc(C),C)$, $\tau^0_C\in \wbc(C, \Fc(\Gc(C))$ are 1-cells, and for any 1-cell $Y\in \wbc(C,D)$
$$\chi_Y: Y\circ \chi^0_C\Longrightarrow \chi^0_D\circ \Fc(\Gc(Y)),$$
$$\tau_Y: \Fc(\Gc(Y))\circ \tau^0_C\Longrightarrow\tau^0_D\circ Y. $$
In particular, for any 0-cell $C\in \Bc$ we have that $\chi^0_C\circ \tau^0_C\simeq I$. Thus we can assume that ${}^*(\tau^0_C)=\chi^0_C$.
 
For any 0-cell $C\in \wbc^0$ define the functors
$$\Hc: \Bc(\Gc(C),\Gc(\Fc(B)))\to \wbc(C,\Fc(B)),$$
$$\Hc(X)=\chi^0_{\Fc(B)}\circ \Fc(X)\circ \tau^0_C,$$
$$\widetilde{\Hc}:\wbc(C,\Fc(B)) \to  \Bc(\Gc(C),B),$$
$$ \widetilde{\Hc}(Z)=\Gc(Z). $$
These functors are equivalences, one the quasi-inverse of the other. 
Define also the natural isomorphism 
$$ \alpha: \Hc  \widetilde{\Hc}\to \Id,$$
$$ \alpha_Z=\id_{\chi^0_{\Fc(B)}} \circ \tau_Z,$$
for any $Z\in \wbc(C,\Fc(B))$. Let $C\in \wbc^0$ be a 0-cell, then, using the definition of $\widehat{\Fc}$ given in the proof of  Proposition \ref{center-2cat-monoid},  we have that
\begin{align*}
 \widehat{\Fc}(\adj_{\Gc(\Fc(B))})_C&=\chi^0_C\circ \Fc(   \ele(\Gc(C),\Gc(\Fc(B))))\circ \tau^0_C \\
 &= \int_{X\in \Bc(\Gc(C),\Gc(\Fc(B)))} {}^* \Hc(X)\circ \Hc(X).
\end{align*} 
Here we used  that $\circ$ is biexact and applied Proposition \ref{properties-end} (i). Also
$$( \adj_{\Fc(B)})_C=  \int_{Y\in \wbc(C,\Fc(B))}  {}^*Y\circ Y.$$
Let $$\widetilde{\pi}^{(\Fc(B), C)}_Y: ( \adj_{\Fc(B)})_C\to  {}^*Y\circ Y$$ and $$\lambda_X: \int_{X\in \Bc(\Gc(C),\Gc(\Fc(B)))}  {}^* \Hc(X)\circ \Hc(X)\to {}^*X\circ X$$
be the associated dinatural transformations. As a space saving measure we will write $\widetilde{\pi}_Y=\widetilde{\pi}^{(\Fc(B), C)}_Y$.

Since the functor $\Hc:\Bc(\Gc(C), \Gc(\Fc(B)))\to \wbc(C,\Fc(B))$ is an equivalence of categories, using (the proof of) Proposition \ref{properties-end} (ii), we get that there is an isomorphism 
$$ h^C: \widehat{\Fc}(\adj_{\Gc(\Fc(B))})_C\to ( \adj_{\Fc(B)})_C$$
such that
\begin{equation}\label{h-iso-def4}   \big({}^*(\alpha^{-1}_Z)\circ \alpha_Z \big) (\id_{\chi^0_C}\circ \Fc(\lambda_{\widetilde{\Hc}(Z)})\circ \id_{\tau^0_C}) = \widetilde{\pi}_Z \,h^C,
\end{equation}
for any $Z\in \wbc(C,\Fc(B))$. Let us prove that $h:\widehat{\Fc}(\adj_{\Gc(\Fc(B))})\to \adj_{\Fc(B)}$ defines an algebra map in the center $Z(\wbc)$. Let  $\sigma$ and $\widetilde{\sigma}$ be the half-braidings of $\adj_{\Gc(\Fc(B))}$ and $\adj_{\Fc(B)}$ respectively. To prove that $h$ is a morphism in the center, we need  to show that equation
\begin{equation}\label{h-is-braided}   (\widetilde{\sigma}^B_Z)^{-1} (\id_Z\circ h^C)= (h^D\circ \id_Z) \widehat{\Fc}( (\sigma^B)^{-1})_Z
\end{equation}
is satisfied  for any 1-cell $Z\in \wbc(C,D)$. Recall that the definition of $\widehat{\Fc}( \sigma^{-1}) $ is given in the proof of Proposition \ref{center-2cat-monoid}. To prove  equation \eqref{h-is-braided} it is enough to show that
\begin{equation}\label{h-is-braided2}  
 (\widetilde{\pi}_Y\circ \id_Z)(\widetilde{\sigma}_Z)^{-1} (\id_Z\circ h^C)=  (\widetilde{\pi}_Y\circ \id_Z)(h^D\circ \id_Z) \widehat{\Fc}( \sigma^{-1})_Z.
\end{equation}
for any 1-cell $Y\in \wbc(D,\Fc(B))$. Using \eqref{definition-diagram-braiding}, we obtain that the left hand side of \eqref{h-is-braided2}  is equal to
\begin{align*} &=(\ev_Z\circ \id_{{}^*Y\circ Y\circ Z}) (\id_Z\circ \widetilde{\pi}_{Y\circ Z})(\id_Z\circ h^C)\\
&=(\ev_Z\circ \id_{{}^*Y\circ Y\circ Z}) (\id_Z\circ {}^*(\tau^{-1}_{Y\circ Z})\circ \tau_{Y\circ Z})(\id\circ \Fc(\lambda_{\Gc(Y)\circ \Gc(Z)})\circ \id).
\end{align*}
The second equality follows from \eqref{h-iso-def4}. Next, as a space saving measure, we will denote $ E_C=  \int_{X\in \Bc(\Gc(C),\Gc(\Fc(B)))}  {}^* \Hc(X)\circ \Hc(X)$. Using \eqref{h-iso-def4} we get that the right hand side of \eqref{h-is-braided2}  is equal to
\begin{align*} &= \big({}^*(\alpha^{-1}_Y)\circ \alpha_Y\circ\id_Z) \big)(\id_{\chi^0_D}\circ \Fc(\lambda_{\widetilde{\Hc}(Y)})\circ\id_{\tau^0_D\circ Z}) \widehat{\Fc}( (\sigma^B)^{-1})_Z\\
&=({}^*(\tau^{-1}_Y)\circ \tau_Y\circ\id_Z )(\id_{\chi^0_D}\circ \Fc(\lambda_{\widetilde{\Hc}(Y)})\circ\id_{\tau^0_D\circ Z}) 
(\id_{\chi^0_D\circ \Fc(E_D)} \circ \tau_Z)\\
& (\id\circ \Fc(\sigma^{-1}_{\Gc(Y)})\circ \id)(\chi_Z\circ \id_{\Fc(E_C)\circ \tau^0_C})\\
&= ({}^*(\tau^{-1}_Y)\circ \tau_Y \circ\id_Z) (\id_{\chi^0_D\circ {}^*\Fc(
\Gc(Z))\circ \Fc(\Gc(Z))} \circ \tau_Z)\\ &(\id_{\chi^0_D}\circ \Fc(\lambda_{\widetilde{\Hc}(Y)})\circ \id_{\Fc(\Gc(Z))\circ \tau^0_C}) 
(\id\circ \Fc(\sigma^{-1}_{\Gc(Y)})\circ \id)(\chi_Z\circ \id_{\Fc(E_C)\circ \tau^0_C})\\
&=({}^*(\tau^{-1}_Y)\circ \tau_Y\circ\id_Z ) (\id_{\chi^0_D\circ {}^*\Fc(
\Gc(Z))\circ \Fc(\Gc(Z))} \circ \tau_Z)\\
& (\id\circ \Fc((\lambda_{\Gc(Y) }\circ \id) \sigma^{-1}_{\Gc(Y)}) \circ \id ) (\chi_Z\circ \id_{\Fc(E_C)\circ \tau^0_C})\\
&=({}^*(\tau^{-1}_Y)\circ \tau_Y\circ\id_Z ) (\id \circ \tau_Z)  (\id \circ \Fc(\ev_{\Gc(Z)} \circ \id ) (\id\circ \Fc(\lambda_{\Gc(Y)\circ \Gc(Z)} ) \circ \id)\\
& (\chi_Z\circ \id_{\Fc(E_C)\circ \tau^0_C})\\
&=({}^*(\tau^{-1}_Y)\circ \tau_Y \circ\id_Z) (\id \circ \tau_Z)  \big( \id_{\chi^0_D} \circ \Fc(\Gc(\ev_Z)) (\chi_Z\circ \id_{{}^*\Fc(\Gc(Z))})\circ \id\big)\\
& (\id_{Z\circ \chi^0_C}\circ \Fc(\lambda_{\Gc(Y)\circ \Gc(Z)} ) \circ \id_{\tau^0_C})
\end{align*}
The second equality follows from the definition of $\widehat{\Fc}( \sigma^{-1}) $, and the  fifth equality follows from \eqref{definition-diagram-braiding}. The naturality of $\chi$ implies that for any 1-cell $Z\in \wbc(C,D)$
$$ (\id_{\chi^0_D}\circ \Fc(\Gc(\ev_Z))) \chi_{Z\circ {}^*Z}=\ev_Z\circ \id_{\chi^0_D}.$$
Using \eqref{pseudonat-def1} this equation implies that
\begin{align}\label{nat-chi-ev} \begin{split}(\id_{\chi^0_D}\circ \Fc(\Gc(\ev_Z))) (\chi_Z\circ  \id_{{}^*\Fc(\Gc(Z))})&=(\ev_Z\circ \id_{\chi^0_D})(\id_Z\circ \chi^{-1}_{{}^*Z})\\
&=(\ev_Z\circ \id_{\chi^0_D})(\id_Z\circ {}^*(\tau_{Z})^{-1}).
\end{split}
\end{align} 
Note that in the second equality we have used Lemma \ref{dual-of-pseudonat-equi}. Now, continuing with the  right hand side of \eqref{h-is-braided2}, and using \eqref{nat-chi-ev} we get that it is equal to
\begin{align*}
=({}^*(\tau^{-1}_Y)\circ \tau_Y \circ\id_Z )& (\id \circ \tau_Z) (\ev_Z\circ \id))(\id_Z\circ  {}^*(\tau^{-1}_{Z})\circ \id)\\
& (\id_{Z\circ \chi^0_C}\circ \Fc(\lambda_{\Gc(Y)\circ \Gc(Z)} ) \circ \id_{\tau^0_C}).
\end{align*}
Using  \eqref{pseudonat-def1} for $\tau_{Y\circ Z}$ we see that both sides are equal, 
and $h$ defines a morphism in the center. Let us prove now, that $h$ defines an algebra morphism. Let $\widetilde{m}: \adj_{\Fc(B)}\ot \adj_{\Fc(B)}\to \adj_{\Fc(B)}$ be the multiplication map. Also, if $m: \adj_{\Gc(\Fc(B))}\ot \adj_{\Gc(\Fc(B))}\to  \adj_{\Gc(\Fc(B))}$  is the product of the adjoint algebra, then the product for $\widehat{\Fc}( \adj_{\Gc(\Fc(B))})$ is 
$$\id_{\chi^0_C}\circ \Fc(m_C)\circ \id_{\tau^0_C}, $$
for any 0-cell $C\in \wbc^0$. Hence, we need to show that
\begin{equation} h^C (\id_{\chi^0_C}\circ \Fc(m_C)\circ \id_{\tau^0_C})= \widetilde{m}_C (h^C\circ h^C),
\end{equation}
for any 0-cell $C\in \wbc^0$.  For this, it is enough to prove that
\begin{equation}\label{h-is-algebra}  \widetilde{\pi}_Z  h^C (\id_{\chi^0_C}\circ \Fc(m_C)\circ \id_{\tau^0_C})=\widetilde{\pi}_Z \widetilde{m}_C (h^C\circ h^C),
\end{equation}
for any 1-cell $Z\in \wbc(C,D)$. Using \eqref{h-iso-def4}, we get that the left hand side of \eqref{h-is-algebra} is equal to 
\begin{align*}  &=\big({}^*(\alpha^{-1}_Z)\circ \alpha_Z \big) (\id_{\chi^0_C}\circ \Fc(\lambda_{\widetilde{\Hc}(Z)}m_C)\circ \id_{\tau^0_C}) \\
&=\big({}^*(\alpha^{-1}_Z)\circ \alpha_Z \big) 
(\id_{\chi^0_C}\circ \Fc((\id_{{}^*\Gc(Z)}\circ\ev_{\Gc(Z)}\circ \id_{\Gc(Z)} ) (\lambda_{\Gc(Z)}\circ \lambda_{\Gc(Z)}) )\circ \id_{\tau^0_C})\\
&=\big({}^*(\alpha^{-1}_Z)\circ \alpha_Z \big)  (\id\circ \Fc(\id_{{}^*\Gc(Z)}\circ\ev_{\Gc(Z)}\circ \id_{\Gc(Z)} ) \Fc(\lambda_{\Gc(Z)}\circ \lambda_{\Gc(Z)}) )\circ \id).
\end{align*}
The second equality follows from the definition of the product of the adjoint algebra given in \eqref{product-ch-2cat}. Also, using  \eqref{product-ch-2cat} we get that the right hand side of \eqref{h-is-algebra} is equal to 
 \begin{align*}  &=(\id_{{}^*Z}\circ\ev_Z\circ\id_Z )( \widetilde{\pi}_Z h^C\circ \widetilde{\pi}_Z h^C) \\
 &= (\id_{{}^*Z}\circ\ev_Z\circ\id_Z ) \big({}^*(\alpha^{-1}_Z)\circ \alpha_Z\circ {}^*(\alpha^{-1}_Z)\circ \alpha_Z\big) (\id\circ\Fc(\lambda_{\Gc(Z)}\circ \lambda_{\Gc(Z)}) )\circ \id).
 \end{align*}
The second equality follows from  \eqref{h-iso-def4}. It follows  from \eqref{nat-chi-ev} that both sides are equal.  \epf

Applying Theorem \ref{adj-to2-equiva} to the 2-category of representations of a tensor category, and using Theorem \ref{adjoint2cat=adjointS}, we get the next result.

\begin{cor} Let $\ca, \Do$ be finite tensor categories. Assume that  $\No$ is an invertible $(\D,\ca)$-bimodule category, and $\Mo$ be an indecomposable left exact $\ca$-module. There is an isomorphism of algebras 
$$\theta(\cha_\Mo)\simeq \cha_{\,\Fun_\ca(\No,\Mo)}. $$
Here $\theta:Z(\ca)\to Z(\Do)$ is the monoidal equivalence presented in \eqref{theta-equival-center}.\qed
\end{cor}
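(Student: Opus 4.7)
The plan is to assemble this corollary directly from Theorem~\ref{adj-to2-equiva}, Theorem~\ref{adjoint2cat=adjointS}, and the commutative diagram \eqref{theta-equival-center}, applied to the biequivalence $\theta^{\No}\colon \camod_e\to {}_\Do\text{Mod}_e$ of Proposition~\ref{higher-Morita}. The entire argument is a diagram chase; no new calculation should be needed.

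First I would invoke Theorem~\ref{adj-to2-equiva} with $\Fc=\theta^{\No}$ and $B=\Mo$. Since $\camod_e$ and ${}_\Do\text{Mod}_e$ are finite 2-categories by Lemma~\ref{C-mod-is-finite}, $\theta^{\No}$ is a 2-equivalence, and $\theta^{\No}(\Mo)=\Fun_\ca(\No,\Mo)$, the theorem gives an algebra isomorphism
$$\widehat{\theta^{\No}}(\adj_\Mo)\;\simeq\;\adj_{\Fun_\ca(\No,\Mo)}$$
in $Z({}_\Do\text{Mod}_e)$.

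Next I would transport this isomorphism through the vertical equivalences of the diagram \eqref{theta-equival-center}. Denote by $\Phi_\ca\colon Z(\camod_e)\to Z(\ca)^{\rev}$ and $\Phi_\Do\colon Z({}_\Do\text{Mod}_e)\to Z(\Do)^{\rev}$ the monoidal equivalences of Theorem~\ref{equivalence-centers}. Theorem~\ref{adjoint2cat=adjointS} applied to $\ca$ and to $\Do$ yields algebra isomorphisms
$$\Phi_\ca(\adj_\Mo)\simeq \cha_\Mo,\qquad \Phi_\Do(\adj_{\Fun_\ca(\No,\Mo)})\simeq \cha_{\Fun_\ca(\No,\Mo)}.$$
Finally, the commutativity of \eqref{theta-equival-center} as a diagram of monoidal equivalences, $\Phi_\Do\circ\widehat{\theta^{\No}}\simeq \theta\circ\Phi_\ca$, lets me chain everything together:
$$\theta(\cha_\Mo)\;\simeq\;\theta(\Phi_\ca(\adj_\Mo))\;\simeq\;\Phi_\Do(\widehat{\theta^{\No}}(\adj_\Mo))\;\simeq\;\Phi_\Do(\adj_{\Fun_\ca(\No,\Mo)})\;\simeq\;\cha_{\Fun_\ca(\No,\Mo)}.$$

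The only real point to check is that every isomorphism in this chain respects the algebra structure. Theorem~\ref{adj-to2-equiva} already gives this for the 2-categorical step; monoidal equivalences automatically preserve algebra objects, so $\Phi_\ca$, $\Phi_\Do$, and $\theta$ transport algebras to algebras; and Theorem~\ref{adjoint2cat=adjointS} asserts that the comparison with Shimizu's $\cha_\Mo$ is an isomorphism \emph{of algebras} in the center. Thus no bookkeeping beyond composing the isomorphisms is required, and the main potential obstacle---namely, checking compatibility of the various half-braidings and multiplications across the identifications---has already been handled in the cited results. The plan is therefore simply to write this chain of four isomorphisms as the proof.
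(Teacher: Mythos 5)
Your proposal is correct and is essentially the paper's own argument: the paper derives this corollary precisely by applying Theorem \ref{adj-to2-equiva} to the biequivalence $\theta^{\No}$ of Proposition \ref{higher-Morita}, combining it with Theorem \ref{adjoint2cat=adjointS} on both sides, and using the commutativity of diagram \eqref{theta-equival-center} to identify $\widehat{\theta}^{\No}$ with $\theta$. All the algebra-compatibility checks are indeed already contained in the cited results, so your chain of four isomorphisms is exactly the intended proof.
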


\subsection*{Acknowledgements}
This work was partially supported by CONICET and
Secyt (UNC), Argentina. We thank the referee for his/her comments, that improved the presentation of the paper.


\begin{thebibliography}{AEGPk}




\bibitem{BGM}  {\sc E. Bernaschini, C. Galindo} and {\sc M. Mombelli,}   \emph{Group actions on 2-categories}, Manuscr. Math.  159, No. 1-2, 81--115 (2019).

\bibitem{BM}  {\sc N. Bortolussi} and {\sc M. Mombelli}, \emph{(Co)ends for representations of tensor categories}. Preprint arxiv 2010.12425

\bibitem{EGNO}  {\sc P. Etingof}, {\sc S. Gelaki}, {\sc D. Nikshych} and {\sc V. Ostrik}. \emph{Tensor categories}, Lectures notes (2009) 80--83. http://www-math.mit.edu/$\sim$etingof/tenscat1.pdf


\bibitem{ENO}  {\sc P. Etingof}, {\sc D. Nikshych} and {\sc V. Ostrik}. \emph{Fusion categories and homotopy theory}, Quantum Topol. \textbf{1} no. 3 (2010), 209--273.

\bibitem{EO} {\sc P. Etingof} and {\sc V. Ostrik}.
\emph{Finite tensor categories}, Mosc. Math. J. \textbf{4} no. 3 (2004)
, 627--654.







\bibitem{DSS} {\sc C.L. Douglas}.  {\sc C. Schommer-Pries} and {\sc N. Snyder}. \emph{The balanced tensor product of module categories}, Kyoto J. Math. Volume 59, Number 1 (2019), 167--179.

\bibitem{FSS} \textsc{J. Fuchs, G. Schaumann} and \textsc{ C. Schweigert},\emph{ Eilenberg-Watts calculus for finite categories and a bimodule Radford $ S^4$ theorem}, Trans. Amer. Math. Soc. 373 (2020), 1--40.


\bibitem{Gr}  \textsc{J. Greenough}. \emph{Monoidal 2-structure of bimodules categories}, J. Algebra \textbf{324} (2010), 1818--1859.
 
\bibitem{Ka} \textsc{ C. Kassel}, \emph{Quantum groups}. In: Graduate Texts in Mathematics, vol. 155. Springer (1995)


\bibitem{Mac}
\textsc{S. MacLane}. \emph{Categories for the working mathematician}, 
  Graduate Texts in Mathematics, vol. 5, Springer-Verlag, New York, 1998.

\bibitem{MM1} {\sc V. Mazorchuk} and {\sc
V. Miemietz,} \emph{Cell 2-representations of finitary 2-categories}, Compos. Math. \textbf{147}, No. 5, (2011) 1519--1545.

\bibitem{MM2} {\sc V. Mazorchuk} and {\sc
V. Miemietz,} \emph{Transitive 2-representations of finitary 2-categories}, Trans. Am. Math. Soc. 368 (2016), no. 11, 7623--7644.
\bibitem{MS} {\sc E. Meir} and {\sc M. Szymik}, \textit{Drinfeld centers for bicategories}, Doc. Math.  20, (2015) 707--735 .





\bibitem{Sch} {\sc P.Schauenburg}. \textit{The monoidal center construction and bimodules.} J. Pure Appl. Algebra, 158 (2001) 325--346.


\bibitem{Sh1} {\sc K. Shimizu.} \emph{The monoidal center and the character algebra.}  J. Pure Appl. Algebra 221, No. 9, 2338--2371 (2017).

\bibitem{Sh2} {\sc K. Shimizu,} \emph{Further results on the structure of (Co)ends in fintite tensor categories}, Appl. Categor. Struct. (2019). https://doi.org/10.1007/s10485-019-09577-7



\end{thebibliography}
\end{document}